\newtheorem{thm}{Theorem}
\newtheorem{defn}[thm]{Definition}
\newtheorem{prop}[thm]{Proposition}
\newtheorem{lem}[thm]{Lemma}
\newtheorem{cor}[thm]{Corollary}
\newtheorem{ex}[thm]{Example}
\newtheorem{cex}[thm]{Counterexample}
\newcommand{\mcl}[1]{\mathcal{#1}}
\newcommand{\mbb}[1]{\mathbb{#1}}
\newcommand{\mbf}[1]{\mathbf{#1}}
\newcommand{\R}{\mathbb{R}}
\newcommand{\N}{\mathbb{N}}
\newcommand{\bmat}[1]{\begin{bmatrix}#1\end{bmatrix}}
\newcommand{\eps}{\varepsilon}
\title{\LARGE \bf
Extensions of the Dynamic Programming Framework:\\ Battery Scheduling, Demand Charges, and Renewable Integration
}
\author{Morgan Jones%
	\thanks{M. Jones is with the School for the Engineering of Matter, Transport and Energy, Arizona State University, Tempe, AZ, 85298 USA. e-mail: {\tt \small morgan.c.jones@asu.edu } },
	Matthew M. Peet
	\thanks{M. Peet is with the School for the Engineering of Matter, Transport and Energy, Arizona State University, Tempe, AZ, 85298 USA. e-mail: {\tt \small mpeet@asu.edu } }
}
\begin{document}

\maketitle
\thispagestyle{plain}
\pagestyle{plain}

\begin{abstract}
We consider a general class of Dynamic Programming (DP) problems with non-separable objective functions. We show that for any problem in this class, there exists an augmented-state DP problem which satisfies the Principle of Optimality and the solutions to which yield solutions to the original problem. {Furthermore, we identify a subclass of DP problems with Naturally Forward Separable (NFS) objective functions for which this state-augmentation scheme is tractable.} We extend this framework to stochastic DP problems, {proposing a suitable definition of the Principle of Optimality}. We then apply the resulting algorithms to the problem of optimal battery scheduling with demand charges using a data-based stochastic model for electricity usage and solar generation by the consumer.

%
\end{abstract}

\section{Introduction}

The optimal use of battery storage can be formulated as a discrete time process combined with decision variables and an objective function - a formulation commonly known as Dynamic Programming (DP)~\cite{Bellman}. DP is a class of algorithms that break down complex optimization problems into simpler sequential subproblems, each of which is solved using Bellman's Equation. For DP to work, however, we require that the optimization problem satisfies the \textit{Principle of Optimality}~\cite{strotz1955myopia} {({a.k.a} time-consistency~\cite{shapiro2009time,carpentier2012dynamic})}; from any point on an optimal trajectory, the remaining portion of the optimal trajectory is also optimal for the problem initiated at that point \cite{Weber_2016}. DP problems commonly have an additively separable objective function of the form $J(\mbf u, \mbf x)= \sum_{t=0}^{T-1} c_t(x(t),u(t)) + c_T(x(T))$. Problems of this form can be shown to satisfy the Principle of Optimality. However in problems such as optimal battery scheduling, we find non-additively separable objective functions. For example, if the objective is of the form $J(\mbf u, \mbf x)= \sum_{t=0}^{T-1} c_t(x(t),u(t)) + \max_{t_0 \le k \le T}{d_{{k}}(x({k}))}$ then the problem does not satisfy the Principle of Optimality. In this paper we propose a method for solving DP problems with non-separable objective functions by constructing equivalent DP problems with additively separable objective functions. Such reformulated problems then satisfy the Principle of Optimality and can therefore be solved using Bellman's Equation. {Moreover, we identify a class of problems, defined by Naturally Forward Separable (NFS) objectived, wherein state augmentation method does not substantially increase the complexity of the problem.}

For stochastic DP we generalize our framework and {propose a suitable definition of the \textit{Principle of Optimality}}. As discussed in \cite{Sniedovich_2002_eureka} such a definition is non trivial. Inspired by \cite{Hinderer_1970}, we construct probability measures on the sets the state variable can take at each time stage induced by the underlying random variables. We then say a stochastic problem satisfies the \textit{Principle of Optimality} if from any point on a trajectory followed using an optimal policy, $\pi$, the policy $\pi$ is also optimal for the problem initiated from that point with probability one.


Dynamic programming for problems which do not satisfy the Principle of Optimality has received relatively little attention. The only {general} approach to the problem seems to be that taken in~\cite{Duan} which considered the use of multi-objective optimization in the case where the objective function is ``backward separable''. Our approach differs from \cite{Duan} as we consider the class of ``forward separable'' objective functions. In this paper we show that almost any objective function is forward separable in a certain sense and that for such problems there exists an additively separable augmented-state dynamic programming problem that satisfies the Principle of Optimality and from which solutions to the original forward separable problem can be recovered - See Section~\ref{sec:ADP}. However, the resulting augmented-state DP problem has a higher dimensional state space than the original DP problem - an issue that can potentially render the augmented problem intractable due to the ``curse of dimensionality''. For this reason, we propose a complexity metric for the forward separable representation and show that in certain cases the dimensionality of the augmented system does not significantly exceed the dimensionality of the original problem - {a case where Bellman's equation can be used effectively~\cite{powell2007approximate} and which we refer to as \textit{Naturally Forward Separable} (NFS).}

{Note that although state augmentation has been used in the context of DP~\cite{lourens2012augmented,skaf2010shrinking}, the only tractable use of state augmentation to recover the Principle of Optimality appears to be~\cite{bauerle2013more} and~\cite{haskell2015convex}, who considered a DP problem with objective function of the form $J(\mbf u, \mbf x)= U(\sum_{t=0}^{T-1} c_t(x(t),u(t)))$; and~\cite{Sniedovich_1993}, who considered a DP problem with variance-type objective function. Both these results can be considered special cases of the NFS class of objective functions proposed in this paper.}


In practice it is rare to be able to analytically solve Bellman's Equation. Therefore, once the augmented-state DP problem is formulated we propose a map to an approximated DP problem that can be analytically solved. {Using an optimal solution from the approximated DP problem a feasible policy for the original problem is then constructed using a discretization scheme based on~\cite{Dufour_2012} and~\cite{Jones_2018}. {We show that as the number of discrete points increases, the resulting policies converge to the optimal policy}.}

{To illustrate the proposed methods, we consider battery scheduling for mitigating the effect of variability in renewable energy resources. Specifically, renewable energy sources are most accurately modeled as an uncontrollable Gauss-Markov (G-M) process and the battery (for both consumers and utilities) attempts to minimize energy costs based on time-of-use while also minimizing the maximum rate of energy consumption. Based on this model, we formulate the battery storage problem as a DP with a non-separable objective function consisting of both integrated time-of-use charges and a maximum term representing the demand charge. Furthermore, we propose a model of solar generation as a G-M process and minimize the expected value of the proposed objective.
The fundamental mathematical challenge with dynamic programming problems of this form is that, as shown in Section~\ref{sec:GDP}, problems which include maximum terms in the objective do not satisfy the \textit{Principle of Optimality} and thus the recursive solution of the Bellman equation (\cite{Bellman}) does not yield an optimal policy. To overcome this difficulty, we show that the battery scheduling problem is a special case of a forward separable DP problem with {an} NFS objective function. We then apply our state-augmentation technique to numerically solve the deterministic battery scheduling problem for given forecast solar data. In section \ref{sec:Solving the stochastic battery scheduling problem} we  apply our approach to the battery scheduling problem using a Gauss-Markov model of solar generation extracted from data provided by local utility SRP. Note that this result extends previous work which considered a more limited non-separable DP framework as applied to battery scheduling in~\cite{jones2017solving}.}

{Remarkably, almost no work has been done on optimal use of batteries for reduction of demand charges. The exceptions include the heuristic algorithms of~\cite{Malky} and the pioneering work of~\cite{braun_2006} , which considered \textit{only} a demand charge. Recently this group used an ad-hoc algorithm to consider a combined demand/consumption charge in~\cite{cai_2016} using detailed models of cooling/load. Furthermore, in \cite{Vijay} a similar energy storage problem is solved using optimized curtailment and load shedding. An $L_p$ approximation of the demand charge was used in combination with multi-objective optimization in~\cite{multi-objective} and, in addition, the optimal use of building mass for energy storage was considered in~\cite{Thermostat}, wherein a bisection on the demand charges was used. {We note that none of these approaches resolve the fundamental mathematical problem of DP with a non-separable cost function. 
}}

{The paper is organized as follows. {In Section~\ref{sec:GDP} we introduce a formal definition of the DP problem along with associated notation and use this framework to define a \textit{Principle of Optimality}.} Next, we consider a class of objective functions we refer to as forward separable. In Section~\ref{sec:ADP}, we show that for any DP problem with forward separable objective, there exists {an} augmented-state DP problem with separable objective for which the Principle of Optimality holds and from which solutions to the original DP with FS objective can be recovered. In Section \ref{sec: when we can use state augmentation} we define a class of objective functions, termed Naturally Forward Separable (NFS). We show DP problems with naturally forward separable objective functions can be tractable solved using state augmentation. In Section~\ref{sec:numerical} we show how to approximate and numerically solve augmented-state dynamic programming problems. Furthermore, we extend our framework to stochastic DP problems in Section~\ref{sec:SDP} and give a discretization scheme to solve stochastic DP's with additively separable objective functions in Section \ref{sec: numerically solving stochastic problems}. We summarize how state augmentation can be used with discretization methods to solve DP problems with NFS objective functions in Section \ref{sec: augmentation and discretization methods}. In Section~\ref{sec:battery1} we formulate and solve the battery scheduling problem as a DP with NFS objective function.}

  \section{Background: Dynamic Programming}\label{sec:GDP}
  In this paper, we propose a {framework for representing a general} class of Dynamic Programming (DP) problems. Specifically, we define a {general} DP problem as a sequence of optimization problems $\mcl G(t_0,x_0)$, indexed by $t_0\in \N$, and defined by an indexed sequence of objective functions $J_{t_0}: \R^{m \times (T-t_0)} \times \R^{n \times (T-t_0+1)} {\to \R}$ where we say that {the state and input sequence}, $\mathbf{u}^*\in \R^{m \times (T-t_0)}$ and $\mathbf{x}^*\in \R^{n \times (T-t_0+1)}$, solve $ \mcl G(t_0,x_0)$ if,
 \begin{align}
 &(\mathbf{u}^*,\mathbf{x}^*) {\in} \arg \min_{\mathbf u, \mathbf x} J_{t_0}(\mathbf u, \mathbf x) \label{eqn:opt}\\ \nonumber
 &\text{subject to:  }  \\ \nonumber
 & x(t+1)=f(x(t),u(t),t) \text{ for  } t=t_{0},..,T, \\ \nonumber
    & x(t_0)=x_0 , \text{ } x(t) \in X_t \subset \mathbb{R}^n \text{ for  } t=t_{0},..,T-1, \\ \nonumber
    & \text{ } u(t) \in U \subset \mathbb{R}^m \text{ for  } t=t_{0},..,T-1,\\ \nonumber
    &\mbf u=(u(t_0),...,u(T-1)) \text{ and } \mbf x =(x(t_0),...,x(T)),
 \end{align}
 where $f$ : $\mathbb{R}^n \times \mathbb{R}^m \times \mathbb{N} \to \mathbb{R}^n$, {$U \subset \R^m$ and $X_t \subset \R^n$} for all $t$. We denote $J_{t_0}^* =J_{t_0}(\mathbf u^*, \mathbf x^*)$.

 We call $\{x(t)\}_{t_0\le t \le T}$ the {set of} state variables and $n$ the state space dimension. Similarly we will call $\{u(t)\}_{t_0 \le t \le T-1}$ the input (control) variables and {$m=\dim( U )$} the input (control) space dimension. For cases where the dimension of the state variable, $x(t)$, varies with time, we slightly abuse notation and define the state space dimension as $\max_{t_0 \le t \le T} {\dim(X_t)}$.
 \begin{defn} \label{defn: additively seperbale function}
 	The function $J_{t_0}: \R^{m \times (T-t_0)} \times \R^{n \times (T-t_0+1)} {\to \R}$ is said to be additively separable if there exist functions, $c_T(x):\mathbb{R}^n \to \mbb R$, and $c_t(x,u):\mbb R^n \times \R^m \to \R $ for $t=t_0,\cdots T-1$ such that,
 	\begin{align} \label{eqn:additively seperable}
 	 J_{t_0}(\mbf u, \mbf x)= \sum_{t=t_0}^{T-1} c_t(x(t),u(t)) + c_T(x(T)),
 	\end{align}
 	where $\mbf u=(u(t_0),...,u(T-1)) \text{ and } \mbf x =(x(t_0),...,x(T))$.
 	\end{defn}
 	{To illustrate, we note that} the average {value of {a function $a_t: \R^n \to \R$}, defined as} $J(\mbf u, \mbf x)= \frac{1}{T} \sum_{t=0}^{T} a_t(x(t))$, is clearly an additively separable function. Variance type functions \eqref{eqn: var type functions}, however, are not additively separable.
\begin{defn}
     	We say the sequence of inputs  $\mathbf{u}=(u({t_0}),...,u({T-1})) \in \R^{m \times (T-t_0)}$ is feasible if $u(t) \in U \text{ for  } t=t_0,..,T-1$ and for $x(t+1)=f(x(t),u(t),t)$ and $x(t_0)=x_0$, then $x(t)\in X$ for all $t$. For a given $x$, we denote by $\Gamma_{t,x}$, the set $u \in U$ such that {$f(x,u,t) \in X_{t+1}$}. In this paper we only consider problems where $\Gamma_{t,x}$ is nonempty for all $x$ and $t$.
\end{defn}
Note that for this class of DP problems, feasibility is inherited. That is, if $\mathbf{u}=(u({t}),....,u({T-1}))$ is feasible with $\mathbf{x} =(x(t),\cdots,x(T))$ for $\mcl G(t,x(t))$ and  $\mathbf{v}=(v({s}),....,v({T-1}))$ is feasible with $\mathbf{h} =(h(s),\cdots,h(T))$ for $\mcl G(s,x(s))$ where $s>t$, then $\mathbf{w}=(u(t),\cdots,u(s-1),v({s}),....,v({T-1}))$ with $\mathbf{z} =(x(t),\cdots,x(s-1),h(s),\cdots,h(T))$ is feasible for $\mcl G(t,x(t))$.


\begin{defn} \label{def: policy}
       	A {(Markov)} \textit{policy} is any map from the present state and time to a feasible input $(x,t) \mapsto u(t) \in \Gamma_{x,t}$, as $u(t)=\pi(x,t)$. We denote the set of policies consistent with some DP problem as $\Pi$. We say that $\pi^*$ is an \textit{optimal policy} for Problem~\eqref{eqn:opt} if
       \[
\mathbf{u}^* =(\pi^*(x_0,t_0),....,\pi^*(x(T-1),T-1) )
       \]
where $x({t+1})^*=f(x(t)^*,\pi^*(x(t)^*,t),t)$ for all $t$.
\end{defn}
{We now define a ``Principle of Optimality'' consistent with our DP formulation and which provides a necessary condition} for such DP problems to be solvable using Bellman's equation \eqref{eqn:bellman}. If a DP problem is solvable using Bellman's equation, then this equation yields an optimal policy.
\begin{defn} \label{defn: principle of optimality deterministic}
We say a DP problem, $\mcl G(t_0,x_0)$, of the Form~(1) \textit{satisfies the Principle of Optimality} if the following holds. For any $s$ and $t$ with $t_0 \le t<s<T$, if $\mathbf u^*=(u(t),...,u(T-1))$ and $ \mathbf x^*=(x(t),...,x(T))$ solve $\mcl G(t,x(t))$ then  $\mathbf v=(u(s),...,u(T-1))$ and $ \mathbf h=(x(s),...,x(T))$ solve $\mcl G(s,x(s))$.
\end{defn}

The standard form of DP, equivalent to that defined in~\cite{Bellman}, solves indexed DP problems of the Form~\eqref{eqn:opt} with an additively separable objective function. We denote this class of DP problems by $\mcl P(t_0,x_0)$:
 \begin{align}
  &\min_{\mathbf u, \mathbf x}  J_{t_0}(\mathbf u, \mathbf x) = \sum_{t=t_0}^{T-1}{c_{t}(x(t),u(t))} + c_{T}({x(T)}) \label{eqn:DP}\\ \nonumber
 &\text{subject to:  } \\ \nonumber
& x(t+1)=f(x(t),u(t),t) \text{ for  } t=t_{0},..,T-1, \\ \nonumber
& x(t_0)=x_0 , \text{ } x(t) \in X_t \subset \mathbb{R}^n \text{ for  } t=t_{0},..,T, \\ \nonumber
& \text{ } u(t) \in U \subset \mathbb{R}^m \text{ for  } t=t_{0},..,T-1, \\ \nonumber
& \mbf u=(u(t_0),...,u(T-1)) \text{ and } \mbf x =(x(t_0),...,x(T)).
 \end{align}
 Note that $J_{T}(x)=c_{T}({x})$. We will refer to $x_0 \in \mathbb{R}^n$ as the initial state, $J_{t_0}$ is the objective function, $c_t$ : $\mathbb{R}^n \times \mathbb{R}^m \to \mathbb{R}$ for $t=t_0,..,T-1$, $c_T$ $\mathbb{R}^n \to \mathbb{R}$ are given functions and $f$ : $\mathbb{R}^n \times \mathbb{R}^m \times \mathbb{N} \to \mathbb{R}^n$ is a given vector field. The following lemma shows that this class of problems satisfies the proposed Principle of Optimality.
 \begin{lem} \label{lem: P(t,x) satisfies the principle of optimality}
Any problem of form $\mcl P(t_0,x_0)$ in \eqref{eqn:DP} satisfies the Principle of Optimality.
 \end{lem}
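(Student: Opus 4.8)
The plan is to prove the statement by a standard \emph{cut-and-paste} (exchange) argument, exploiting the additive separability of the objective in~\eqref{eqn:DP} to split the cost at the intermediate time $s$. Fix $t$ and $s$ with $t_0 \le t < s < T$, and suppose $(\mbf u^*, \mbf x^*) = ((u(t),\dots,u(T-1)),(x(t),\dots,x(T)))$ solves $\mcl P(t,x(t))$. The first thing I would write down is the observation that, because the running costs $c_k$ depend only on the instantaneous state and input, the objective decomposes cleanly as
\begin{align} \nonumber
J_t(\mbf u^*, \mbf x^*) = \sum_{k=t}^{s-1} c_k(x(k),u(k)) + \left( \sum_{k=s}^{T-1} c_k(x(k),u(k)) + c_T(x(T)) \right),
\end{align}
and the parenthesized tail is exactly $J_s(\mbf v, \mbf h)$, where $\mbf v = (u(s),\dots,u(T-1))$ and $\mbf h = (x(s),\dots,x(T))$ are the truncated sequences. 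The leading ``head'' sum is a fixed constant once the prefix $(u(t),\dots,u(s-1))$ and $(x(t),\dots,x(s))$ are held fixed; this constancy is the crux of the argument.

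Next I would argue by contradiction. Suppose $(\mbf v, \mbf h)$ does \emph{not} solve $\mcl P(s, x(s))$. Then there is a feasible pair $(\mbf v', \mbf h')$ for $\mcl P(s,x(s))$ with $J_s(\mbf v', \mbf h') < J_s(\mbf v, \mbf h)$; note that both tail problems share the same initial condition $x(s)$, so $h'(s) = x(s) = h(s)$. I would then form the concatenated trajectory that uses the original prefix $(u(t),\dots,u(s-1))$ on $[t,s)$ and switches to the improved tail $(\mbf v', \mbf h')$ on $[s,T]$. By the feasibility-inheritance property noted immediately after the feasibility definition in the excerpt, this spliced sequence is feasible for $\mcl P(t,x(t))$.

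Evaluating the objective of the spliced trajectory, the head sum is unchanged (it reuses the original prefix), so its cost equals $\sum_{k=t}^{s-1} c_k(x(k),u(k)) + J_s(\mbf v', \mbf h')$, which is strictly less than $J_t(\mbf u^*, \mbf x^*)$ by the decomposition above. This contradicts the optimality of $(\mbf u^*, \mbf x^*)$ for $\mcl P(t,x(t))$, and hence $(\mbf v, \mbf h)$ must solve $\mcl P(s,x(s))$, which is precisely the Principle of Optimality. I expect the only genuinely delicate point to be the feasibility of the spliced control, namely that gluing an optimal prefix to an admissible tail at the matching state $x(s)$ yields an admissible whole; but this is exactly what the inherited-feasibility remark established, so the main work reduces to bookkeeping the index ranges correctly and verifying that the switch at time $s$ preserves the state continuity $h'(s)=x(s)$.
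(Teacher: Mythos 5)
Your proposal is correct and follows essentially the same argument as the paper: a proof by contradiction in which the additively separable objective is split at time $s$, a strictly better tail for $\mcl P(s,x(s))$ is spliced onto the original prefix (feasible by the inheritance property), yielding a strictly smaller value of $J_t$ and contradicting optimality of $(\mbf u^*,\mbf x^*)$. No gaps; your explicit remark that $h'(s)=x(s)$ ensures the splice is well-defined is a point the paper leaves implicit.
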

 \begin{proof}
 	Suppose $\mathbf u^*=(u(t),...,u(T-1))$ and $ \mathbf x^*=(x(t),...,x(T))$ solve $\mcl P(t,x(t))$ in (2). Now we suppose by contradiction that there exists some $s >t$ such that $\mathbf v=(u(s),...,u(T-1))$ and $ \mathbf h=(x(s),...,x(T))$ do not solve $\mcl P(s,x(s))$. We will show that this implies that $\mathbf u^*$ and $ \mathbf x^*$ do not solve $\mcl P(t,x)$ in (2), thus verifying the conditions of the Principle of Optimality.  If $\mathbf v$ and $ \mathbf h$ do not solve $\mcl P(s,x(s))$, then there exist feasible $\mathbf{w}$, $\mathbf z$ such that
 $J_{s}(\mathbf w, \mathbf z) < J_{s}(\mathbf v, \mathbf h)  $. i.e.
 	\begin{align*}
 	 J_{s}(\mathbf w,\mathbf z ) & = \sum_{t=s}^{T-1}{c_{t}(z(t),w(t))} + c_{T}({z(T)}) \\
 	& < \sum_{t=s}^{T-1}{c_{t}(x(t),u(t))} + c_{T}({x(T)})
 	=J_{s}(\mathbf v,\mathbf h )
 	\end{align*}
 	Now consider the proposed feasible sequences $\hat{\mathbf u}=(u(t),...,u(s-1), w(s),...,w(T-1))$ and $\hat{\mathbf x}=(x(t),...,x(s-1), z(s),...,z(T-1))$. It follows:
 	\begin{align*}
 	& J_{t}(\hat{\mathbf u},\hat{\mathbf x} ) \\
 	& =\sum_{k=t}^{s-1}{c_{k}(x(k),u(k))} + \sum_{k=s}^{T-1}{c_{k}(z(k),w(k))} + c_{T}({z(T)}) \\
 	& < \sum_{k=t}^{s-1}{c_{k}(x(k),u(k))}  + \sum_{k=s}^{T-1}{c_{k}(x(k),u(k))} + c_{T}({x(T)}) \\
 	& = J_{t}(\mathbf u^*,\mathbf x^* )
 	\end{align*}
 	which contradicts optimality of $\mbf u^*, \mbf x^*$. Therefore, this class of problems satisfies the Principle of Optimality.
 	\end{proof}

 \begin{prop}[\cite{Lerma_1996}] \label{prop:bellman}
 	For DP problems of the form $\mcl P(t,x)$ in \eqref{eqn:DP} with optimal objective, {$J_{t}^*=J_t(\mbf u^*, \mbf x^*)$}, define the function $F(x,t)= J_{t}^*$. Then the following holds.
 	  {	\begin{align} \label{eqn:bellman}
 	  	& F(x,t)=\inf_{u \in \Gamma_{t,x}}\{c_t(x,u)+F(f(x,u,t),t+1)\} \\ \nonumber
 	  	& \hspace{2.5cm}  \forall x \in X_t \text{ and }  \forall t \in \{t_0,..,T-1\}, \\
 	  	& F(x,T)=c_T(x) \qquad \forall x \in X_T \notag
 	  	\end{align} }
 \end{prop}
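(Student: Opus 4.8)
The plan is to verify the terminal identity directly and then prove the recursion by a pair of opposing inequalities, using additive separability together with the Principle of Optimality already established for this class. The terminal identity $F(x,T)=c_T(x)$ is immediate: in $\mcl P(T,x)$ there are no remaining control decisions, the only admissible state sequence is the singleton $x(T)=x$, and the objective collapses to $J_T(\mbf x)=c_T(x(T))=c_T(x)$; hence $F(x,T)=J_T^*=c_T(x)$.

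For the recursion, fix $t$ with $t_0\le t\le T-1$ and $x\in X_t$. First I would establish ``$\le$''. Given any $u\in\Gamma_{t,x}$, let $(\tilde{\mbf u},\tilde{\mbf x})$ be an optimal pair for $\mcl P(f(x,u,t),t+1)$, so its cost equals $F(f(x,u,t),t+1)$. Prepending the single action $u$, with transition $x\mapsto f(x,u,t)$, to this optimal tail produces a pair that is feasible for $\mcl P(t,x)$ by the inherited-feasibility property noted just after the definition of a feasible input sequence. Additive separability gives the cost of this feasible pair as $c_t(x,u)+F(f(x,u,t),t+1)$, which is at least the optimal value $F(x,t)$. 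Taking the infimum over $u\in\Gamma_{t,x}$ yields $F(x,t)\le\inf_{u\in\Gamma_{t,x}}\{c_t(x,u)+F(f(x,u,t),t+1)\}$.

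Next I would establish ``$\ge$''. Let $(\mbf u^*,\mbf x^*)$ solve $\mcl P(t,x)$, write $u^*(t)$ for its first action, and set $x^*(t+1)=f(x,u^*(t),t)$. Splitting off the $t$-th summand by additive separability,
\begin{align*}
F(x,t)=c_t(x,u^*(t))+\Bigl(\sum_{k=t+1}^{T-1}c_k(x^*(k),u^*(k))+c_T(x^*(T))\Bigr).
\end{align*}
When $t<T-1$, the parenthesised term is $J_{t+1}$ evaluated at the tail of $(\mbf u^*,\mbf x^*)$, which by the Principle of Optimality (Lemma~\ref{lem: P(t,x) satisfies the principle of optimality}) solves $\mcl P(t+1,x^*(t+1))$ and therefore equals $F(x^*(t+1),t+1)$; when $t=T-1$ the parenthesised term is simply $c_T(x^*(T))=F(x^*(T),T)$ by the terminal identity. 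In either case $F(x,t)=c_t(x,u^*(t))+F(f(x,u^*(t),t),t+1)\ge\inf_{u\in\Gamma_{t,x}}\{c_t(x,u)+F(f(x,u,t),t+1)\}$, since $u^*(t)\in\Gamma_{t,x}$. Combining the two inequalities gives the stated equation.

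The step requiring the most care is the ``$\ge$'' direction, where identifying the tail cost with $F(x^*(t+1),t+1)$ depends on an optimal pair for $\mcl P(t,x)$ actually existing, so that $F$ is a value attained by some trajectory rather than a bare infimum, and on the restriction of that optimal trajectory being itself optimal for the shifted problem --- exactly the content of the Principle of Optimality. The ``$\le$'' direction is routine once inherited feasibility is in hand, and the terminal case is definitional; the only additional bookkeeping is the separate treatment of $t=T-1$, where the Principle of Optimality as stated does not apply but the terminal identity closes the argument.
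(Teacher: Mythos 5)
Your proof is correct, but there is no in-paper proof to compare it with: the paper states Proposition~\ref{prop:bellman} as imported background, citing \cite{Lerma_1996}, and never proves it. Your argument is the classical two-inequality verification and it fits the paper's framework cleanly: the terminal identity is definitional; the ``$\le$'' direction prepends an arbitrary $u\in\Gamma_{t,x}$ to an optimal tail for $\mcl P(t+1,f(x,u,t))$ and uses exactly the inherited-feasibility remark the paper makes after its definition of feasible input sequences; the ``$\ge$'' direction splits the cost of an optimal pair by additive separability and invokes the paper's own Lemma~\ref{lem: P(t,x) satisfies the principle of optimality}. A point worth highlighting: you correctly noticed that Definition~\ref{defn: principle of optimality deterministic} only covers tails starting at $s<T$, so the Principle of Optimality cannot be cited when $t=T-1$; your separate treatment of that case via the terminal identity is necessary and easy to overlook. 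Two minor remarks. First, in one place you write $\mcl P(f(x,u,t),t+1)$, reversing the paper's $(\text{time},\text{state})$ argument order; this is cosmetic. Second, your argument in both directions assumes minimizers are attained at every stage and state, so that $F$ is an achieved value rather than a bare infimum; this is consistent with the paper's own definition $F(x,t)=J_t(\mbf u^*,\mbf x^*)$, but a statement under a bare $\inf$ would require a routine $\eps$-perturbation of your ``$\le$'' step and a reformulation of the ``$\ge$'' step in terms of $\eps$-optimal pairs.
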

 Equation~\eqref{eqn:bellman} is often referred to as Bellman's equation and a function $F$ which satisfies Bellman's equation  is often referred to as the ``optimal {cost-to-go}'' function. Prop.~\ref{prop:bellman} shows that problems of the Form $\mcl P(t_0,x_0)$ define a solution to Bellman's equation which in turn indexes the optimal objective to the problem. Furthermore, for problems  $\mcl P(t_0,x_0)$, the solution to Bellman's equation can be obtained recursively backwards in time using a minimization on $u$. A solution to Bellman's equation provides a state-feedback law or optimal policy as follows.
\begin{cor}[\cite{Lerma_1996}]
Consider $\mcl P(t_0,x_0)$ in \eqref{eqn:DP}. Suppose $F(x,t)$ satisfies Equation~\eqref{eqn:bellman} for $\mcl P(t_0,x_0)$. Then if there exists a policy such that,
\[
\theta(x,t) {\in} \arg \min_{u \in \Gamma_{t,x}}\{c_t(x,u)+F(f(x,u,t),t+1)\},
\]
then $\theta$ is {an} optimal policy for the problem $\mcl P(t_0,x_0)$.
\end{cor}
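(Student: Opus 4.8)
The plan is to show two things: that the closed-loop trajectory generated by $\theta$ attains the value $F(x_0,t_0)$, and that $F(x_0,t_0)$ is a lower bound on the objective over \emph{all} feasible trajectories. Together these force the $\theta$-trajectory to be a minimizer of $J_{t_0}$, which is exactly the requirement for $\theta$ to be an optimal policy in the sense of Definition~\ref{def: policy}. I deliberately argue directly from the fact that $F$ satisfies~\eqref{eqn:bellman}, rather than invoking Prop.~\ref{prop:bellman}; this avoids presupposing uniqueness of solutions to Bellman's equation.

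First I would construct the closed-loop trajectory by setting $x^\theta(t_0)=x_0$ and, recursively, $u^\theta(t)=\theta(x^\theta(t),t)$ with $x^\theta(t+1)=f(x^\theta(t),u^\theta(t),t)$. Since $\theta$ takes values in $\Gamma_{t,x}$ by hypothesis, each $u^\theta(t)$ is feasible and $x^\theta(t+1)\in X_{t+1}$, so the pair $(\mbf u^\theta,\mbf x^\theta)$ is feasible for $\mcl P(t_0,x_0)$. This feasibility is precisely what lets me evaluate $F$ and the stage costs at every node of the trajectory in the steps below.

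Next comes the verification (lower-bound) step, which I regard as the core of the argument. For an arbitrary feasible $(\mbf u,\mbf x)$, the infimum characterization in~\eqref{eqn:bellman} gives, for each $t\in\{t_0,\dots,T-1\}$, the pointwise inequality $F(x(t),t)\le c_t(x(t),u(t))+F(f(x(t),u(t),t),t+1)=c_t(x(t),u(t))+F(x(t+1),t+1)$, valid because $u(t)\in\Gamma_{t,x(t)}$ and $x(t)\in X_t$. Summing over $t$ telescopes the $F$ terms, and substituting the terminal condition $F(x(T),T)=c_T(x(T))$ yields $F(x_0,t_0)\le \sum_{t=t_0}^{T-1}c_t(x(t),u(t))+c_T(x(T))=J_{t_0}(\mbf u,\mbf x)$. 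Hence $F(x_0,t_0)$ lower-bounds the objective on every feasible trajectory.

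Finally I would rerun the same telescoping along the $\theta$-trajectory. Because $\theta(x^\theta(t),t)$ attains the $\arg\min$ in~\eqref{eqn:bellman}, each inequality becomes an equality, $F(x^\theta(t),t)=c_t(x^\theta(t),u^\theta(t))+F(x^\theta(t+1),t+1)$, and the telescoping sum gives $F(x_0,t_0)=J_{t_0}(\mbf u^\theta,\mbf x^\theta)$ exactly. Combining the two steps, $J_{t_0}(\mbf u,\mbf x)\ge F(x_0,t_0)=J_{t_0}(\mbf u^\theta,\mbf x^\theta)$ for every feasible $(\mbf u,\mbf x)$, so $(\mbf u^\theta,\mbf x^\theta)$ solves $\mcl P(t_0,x_0)$ and $\theta$ is an optimal policy. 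The individual steps are routine telescoping; the points that require care are maintaining feasibility so that Bellman's equation applies at each $(x(t),t)$, and noting that the equality step relies on the hypothesized existence of $\theta$ as an attained minimizer rather than a mere infimum.
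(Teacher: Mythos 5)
The paper states this corollary without proof, attributing it to \cite{Lerma_1996}, so there is no in-paper argument to compare against; your proposal must stand on its own, and it does. Your verification argument is correct: $\theta(x,t)\in\Gamma_{t,x}$ guarantees the closed-loop pair $(\mbf u^\theta,\mbf x^\theta)$ is feasible; for any feasible $(\mbf u,\mbf x)$ the infimum in Equation~\eqref{eqn:bellman} gives $F(x(t),t)\le c_t(x(t),u(t))+F(x(t+1),t+1)$, and telescoping with the terminal condition yields $F(x_0,t_0)\le J_{t_0}(\mbf u,\mbf x)$; along the $\theta$-trajectory the hypothesized attainment of the arg min turns each inequality into an equality, so $J_{t_0}(\mbf u^\theta,\mbf x^\theta)=F(x_0,t_0)$, forcing the closed-loop trajectory to be a minimizer, which is precisely the notion of optimal policy in Definition~\ref{def: policy}. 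This is the standard verification-theorem proof (essentially the argument in the cited reference), and it has the modest added virtue you point out: it uses only the hypothesis that $F$ solves \eqref{eqn:bellman} together with its terminal condition, rather than the identification $F(x,t)=J_t^*$ supplied by Proposition~\ref{prop:bellman}, so it does not presuppose anything about uniqueness of solutions to Bellman's equation.
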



\noindent \textbf{Dynamic Programming with Maximum Terms}
In this paper we consider the special class of indexed DP problems, denoted by $\mcl S(t_0,x_0)$ {and given in \eqref{eqn:S}}. In contrast to problems of the form $\mcl P(t_0,x_0)$ in~\eqref{eqn:opt}, class $\mcl S(t_0,x_0)$ has maximum terms in the objective. Specifically, these problems have the following form: \normalsize

{\small \begin{align}
   &\min_{ \mathbf u, \mbf x} J_{t_0} (\mbf u,\mbf x):=  \sum_{t=t_0}^{T-1}{c_{t}(x(t),u(t))} +c_{T}(x(T)) +\max_{t_0 \le k \le T}{d_{{k}}(x({k}))} \nonumber \\ \label{eqn:S}
 &\text{subject to:  }  \\ \nonumber
& x(t+1)=f(x(t),u(t),t) \text{ for  } t=t_{0},..,T-1, \\ \nonumber
& x(t_0)=x_0 , \text{ } x(t) \in X_t \subset \mathbb{R}^n \text{ for  } t=t_{0},..,T, \\ \nonumber
&  u(t) \in U \subset \mathbb{R}^m \text{ for  } t=t_{0},..,T-1, \\ \nonumber
& \mbf u=(u(t_0),...,u(T-1)) \text{ and } \mbf x =(x(t_0),...,x(T)),
   \end{align}
} \normalsize
where $c_T(x):\mathbb{R}^n \to \mbb R$; $c_t(x,u):\mbb R^n \times \R^m \to \R $ for $t_0 \le t \le T-1$; $d_t(x):\mathbb{R}^n \to \mbb R$ for $t=t_0,\cdots T$; $f$ : $\mathbb{R}^n \times \mathbb{R}^m \times \mathbb{N} \to \mathbb{R}^n$.	
   \begin{cex} \label{cex: max obj does not satisfy principl}
   	The class of DP problems of the form $\mcl S(t_0,x_0)$ in~\eqref{eqn:S} does not satisfy the Principle of Optimality.
   	\end{cex}
   	\begin{proof}
   		We give a counterexample. For $h > 0$, we consider the following problem $\mcl S(0,0)$:
    		\begin{align*}
    		&  \min_{\mathbf{u}\in \R^3, \mbf x\in \R^4} \quad {\sum_{t=0}^{2}{c_{t}(u({t}))} + \max_{0 \le k \le 3}{x(k)}}\\
    			& \text{subject to:  } x(t+1)=x(t)+u(t) \quad \forall t \in \{0,1,2\},  \\
    			&x(0)=0, \text{ } 0 \le x(t) \le h, \text{ } u(t) \in \{-h,0,h\},
    			\end{align*}  	
where here we define $c_{0}(u(0))=-u(0),$ $c_{1}(u(1))=u(1),$ $c_{2}(u(2))=-u(2)/2$.\\
Since $\mbf u \in \{-h,0,h\}^3$, there are 27 input sequences, only 8 of which are feasible. In Table~\ref{tab:counter}, we calculate the objective value of each feasible input sequence and deduce the optimal input is $\mbf{u}^*=(h,-h,h)$, yielding an optimal trajectory of $\mbf x^*=\{0,h,0,h\}$. Following this input sequence until $t=2$ we examine the problem $\mcl S(2,0)$.
    		\begin{align*}
    		& \min_{u(2) \in \R, 0 \le x(3) \le h }{c_{2}(u({2})) + \max_{2 \le k \le 3}{x(k)}}\\
    		& \text{subject to:  } x(t+1)=x(t)+u(t),  \\
    		& x(2)=0, \text{ } 0 \le x(t) \le h, \text{ } u(t) \in \{-h,0,h\}.	
    		\end{align*}
    For this sub-problem, there are two feasible inputs: $u(2)\in \{0,h\}$. Of these, the first is optimal (objective value $0$ vs $h/2$). Thus, although $\mbf u^*=\{h,-h,h\}$ and $\mbf x^*=\{0,h,0,h\}$ solve $\mcl S(0,0), \mbf v =\{h\}$ and $\mbf h =\{0,h\}$ do not solve $\mcl S(2,0)$.
%
%
   		\end{proof}
   		\begin{table}
   			\centering
   			\caption{This table shows the corresponding cost of each feasible policy used in the counter example in lemma 1}
   			\label{tab:counter}
   			\begin{tabular}{|l|l||l|l|l}
   				\cline{1-4}
   				feasible $\mbf u$  & objective value       & feasible $\mbf u$  & objective value   \\ \cline{1-4}
   				$(0,0,0)$  & 0 &  $(h,0,-h)$    & h/2  &  \\ \cline{1-4}
   				$(0,0,h)$    & h/2       & $(h,0,0)$ & 0  &  \\ \cline{1-4}
   				$(0,h,0)$ & 2h        & $(h,-h,0)$  & -h  &  \\ \cline{1-4}
   				$(0,h,-h)$  & (5/2)h & $(h,-h,h)$    & -(3/2)h  &  \\ \cline{1-4}
   			\end{tabular}
   		\end{table}

 \section{{Converting Forward Separable DP To Additively Separable DP}}\label{sec:ADP}

 In this section we define the class of forward separable objective functions. We will show that for dynamic programming problems with a forward separable objective function, augmenting the state variables allows us to use Bellman's equation to obtain an optimal policy.

 Forward separable functions were first defined in \cite{Envelope}. {Intuitively, this is the class of functions that can be separated into a {nested} composition of maps ordered forward in time.} In the next definition we build upon the concept of forward separability by introducing the notion of augmented dimension.
 \begin{defn}
 	The function $J: \R^{m \times (T-t_0)} \times \R^{n \times (T+1 -t_0)} \to \R $ is said to be forward separable if there exist {representation maps} $\phi_{t_0}: \R^n \times \R^m \to \R^{d_{t_0}}$, $\phi_T: \R^n \times \R^{d_{T-1}} \to \R$, and $\phi_i: \R^n \times \R^m \times \R^{d_{i-1}} \to \R^{d_{i}}$ for $i=t_0 + 1,\cdots T-1$ such that
  	\begin{align} \label{forward_sep_def}
	&J(\mbf u, \mbf x)=\phi_T( x(T),\phi_{T-1}(x(T-1),u(T-1),\phi_{T-2}(....,\\ \nonumber
	&\quad  \phi_{{t_0}+1}(x({t_0}+1),u({t_0}+1), \phi_{t_0}(x(t_0),u(t_0)))),....,))), \nonumber
 	\end{align}
where $\mbf u = (u(t_0),...,u(T-1)) \in \R^{m \times (T-t_0)}$ and $u(i) \in \R^m$ for $i \in \{t_0,...,T-1\}$; $\mbf x = (x(t_0),...,x(T)) \in \R^{n \times (T+1 -t_0)}$ and $x(i) \in \R^n$ for $i \in \{t_0,...,T\}$; $d_i \in \N$ for $i \in \{t_0,...,T-1\}$.

Moreover we say $J(\mbf u, \mbf x)$ is forward separable and has a representation dimension of $l$ if there exists $\{\phi_i \}$ that satisfies \eqref{forward_sep_def} and $l=\max_{i \in \{t_0,...,T-1\}} \{d_i\}$ where $d_i= \dim (Im\{\phi_i\})$.
 \end{defn}

\textbf{Note:}
	The representation dimension of a forward separable function is a property of the set $\{\phi_i \}$ chosen and not the function. The representation dimension of a forward separable function is not unique. {Moreover, the forward separable property of an objective function is independent of the DP problem it is associated with; forward separability is solely a property of the function only.}

Clearly, any additively separable objective function of the form
$
J(\mbf u, \mbf x) = \sum_{t=t_0}^{T-1} c_t(u(t),x(t)) + c_T(x(T))
$
is forward separable and has a representation dimension of 1 using,
\begin{align} \label{eqn: addative functions are forward sep}
& \phi_{t_0}(x,u)=c_{t_0}(x,u) \\ \nonumber
& \phi_i(x,u,w)=c_i(x,u)+w \quad \text{for } i=t_0+1,\cdots,T-1 \\ \nonumber
& \phi_T(x,w)=c_T(x)+w.
\end{align}
\subsection{How State Augmentation Solves Forward Separable DP Problems}
We now define the class of forward separable problems $\mcl H(t_0,x_0)$. Such problems are {a special case of} $\mcl G(t_0,x_0)$ \eqref{eqn:opt}, but not {of} $\mcl P(t_0,x_0)$ \eqref{eqn:DP}. Specifically, $\mcl H(t_0,x_0)$ has the form:
\begin{align}\label{opt:forward_sep}
&\min_{\mathbf u, \mbf x} J_{t_0}(\mbf u, \mbf x) \\ \nonumber
 &\text{subject to:  }  \\ \nonumber
& x(t+1)=f(x(t),u(t),t) \text{ for  } t=t_{0},..,T-1, \\ \nonumber
& x(t_0)=x_0 , \text{ } x(t) \in X_t \subset \mathbb{R}^n \text{ for  } t=t_{0},..,T, \\ \nonumber
& \text{ } u(t) \in U \subset \mathbb{R}^m \text{ for  } t=t_{0},..,T-1, \\ \nonumber
& \mbf u=(u(t_0),...,u(T-1)) \text{ and } \mbf x =(x(t_0),...,x(T)),
\end{align}
where $J_{t_0}$ is forward separable with associated representation maps $\phi_i$. For any forward separable DP problem $\mcl H(t_0,x_0)$, we may associate a new augmented-state DP problem {of form} $\mcl A(t_0,x_0)$, which is equivalent to $\mcl H(t_0,x_0)$ and which satisfies the Principle of Optimality. $\mcl A(t_0,x_0)$ is defined as
\small{ \begin{align} \label{augmentation}
	& \min_{ \mathbf u, \mbf z} L_{t_0} (\mathbf u, \mbf z) :=   \phi_{T}(z_1(T),z_2(T)) \\  \nonumber
	&	\text{subject to:  }  \\ \nonumber
	&\bmat{z_1(t+1)\\z_2(t+1)}= \bmat{f(z_1(t),u(t),t) \\ \phi_{t}(z_1(t),u(t), z_2(t)) } t_0+1 \le t < T-1\\ \nonumber
	&  \bmat{z_1(t_0+1)\\z_2(t_0+1)}= \bmat{f(z_1(t_0),u(t_0),T) \\ \phi_{t_0}(z_1(t_0),u(t_0)) }\\ \nonumber
	&  \bmat{z_1(t_0)\\z_2(t_0)} = \bmat{x_0\\ 0}, \text{ } z_{1}(t) \in X_t, \text{ } u(t) \in U \text{ for  } t=t_0,..,T \\ \nonumber
& \mbf u=(u(t_0),...,u(T-1)) \text{ and } \mbf z = \left(\bmat{ z_1(t_0) \\ z_2(t_0)} ,...,\bmat{z_1(T) \\ z_2(T)} \right) \nonumber
	\end{align} } \normalsize
where $f$ : $\mathbb{R}^n \times \mathbb{R}^m \times \mathbb{N} \to \mathbb{R}^n$, $z_1(t) \in \mathbb{R}^n$, $z_2(t) \in \mathbb{R}^{d_t}$, $d_t= \dim(Im\{\phi_{t-1}\})$ and $u(t) \in \mathbb{R}^m$ for all $t$.

\begin{lem} \label{lem: augmented optimization is equivalent to original}
	Suppose $J_{t_0}$ is the objective function for the DP problem $\mcl H(t_0,x_0)$ \eqref{opt:forward_sep} and is forward separable with associated representation maps $\phi_i$. Consider the augmented DP problem $\mcl A(t_0,x_0)$ \eqref{augmentation} and denote its objective function by $L_{t_0}$. Then $J^*_{t_0}=L^*_{t_0}$. Furthermore, suppose $\mbf u$ and $\mbf x=(x(t_0),...,x(T))$ solve $\mcl H(t_0,x_0)$  and $\mbf w$ and $\mbf z = \left(\bmat{ z_1(t_0) \\ z_2(t_0)} ,...,\bmat{z_1(T) \\ z_2(T)} \right)$ solve $\mcl A(t_0,x_0)$. Then $\mbf u=\mbf w$ and $x(t)=z_1(t)$ for all $t$.
\end{lem}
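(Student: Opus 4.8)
The plan is to exploit the fact that the augmented dynamics in \eqref{augmentation} are engineered so that the $z_1$-component reproduces the original state trajectory exactly, while the $z_2$-component accumulates the nested composition that defines the forward separable objective. First I would argue that a given input sequence $\mbf u$ is feasible for $\mcl H(t_0,x_0)$ \eqref{opt:forward_sep} if and only if it is feasible for $\mcl A(t_0,x_0)$. Indeed, the only constraints in \eqref{augmentation} that involve $\mbf u$ are $z_1(t)\in X_t$ and $u(t)\in U$, and since $z_1$ obeys $z_1(t+1)=f(z_1(t),u(t),t)$ with $z_1(t_0)=x_0$ — identical dynamics and initial condition to the original state $x$ — a straightforward induction on $t$ gives $z_1(t)=x(t)$ for every $t$. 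Hence the constraint $z_1(t)\in X_t$ is exactly the original constraint $x(t)\in X_t$, so the feasible input sets of the two problems coincide; the $z_2$-component carries no constraint and therefore does not restrict feasibility.

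The technical core is to track the $z_2$-component. For a fixed feasible $\mbf u$ (with induced common trajectory $x=z_1$), I would show by induction on $t$ that $z_2(t)$ equals the partial forward separable composition built from the representation maps up to time $t-1$; that is, $z_2(t_0+1)=\phi_{t_0}(x(t_0),u(t_0))$, and for each subsequent $t$ the recursion $z_2(t+1)=\phi_t(x(t),u(t),z_2(t))$ reproduces exactly the next layer of the nested expression in \eqref{forward_sep_def}. Carrying this induction through $t=T-1$ yields that $z_2(T)$ equals the entire inner composition $\phi_{T-1}(x(T-1),u(T-1),\phi_{T-2}(\cdots \phi_{t_0}(x(t_0),u(t_0))\cdots))$. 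Substituting into the augmented objective then gives $L_{t_0}(\mbf u,\mbf z)=\phi_T(z_1(T),z_2(T))=\phi_T(x(T),z_2(T))=J_{t_0}(\mbf u,\mbf x)$, so the two objectives agree for every feasible input.

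Since the feasible input sets coincide and the objective values agree pointwise on those sets, the infima coincide, giving $J^*_{t_0}=L^*_{t_0}$, and the sets of minimizing inputs coincide as well; thus any minimizer $\mbf w$ of $\mcl A(t_0,x_0)$ is a minimizer $\mbf u$ of $\mcl H(t_0,x_0)$, and the relation $x(t)=z_1(t)$ follows from the first-step induction. I expect the main obstacle to be purely bookkeeping: matching the accumulator recursion to the nested composition requires care at the boundary indices — the special initialization step at $t_0$ and the final step producing $z_2(T)$, where the index range stated in \eqref{augmentation} must be read as extending through $t=T-1$. Once the induction hypothesis is phrased at the right level of generality — namely that $z_2(t)$ equals the depth-$(t-t_0)$ partial composition — each inductive step is a direct application of the defining recursion, and no genuine estimate is involved.
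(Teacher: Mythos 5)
Your proposal is correct and follows essentially the same route as the paper's own proof: establish the feasibility correspondence via $z_1(t)=x(t)$ (identical dynamics, initial condition, and constraints, with $z_2$ unconstrained), unroll the $z_2$ recursion to recover the nested composition in \eqref{forward_sep_def}, and conclude $L_{t_0}(\mbf w,\mbf z)=J_{t_0}(\mbf u,\mbf x)$ so the optima and minimizers coincide. Your explicit induction on the partial compositions is just a more careful phrasing of the paper's direct unrolling of $z_2(T)$.
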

\begin{proof}
	Suppose  $\mbf w$ and $\mbf z$ solve $\mcl A(t_0,x_0)$. First we show that $\mbf w$ and $\mbf z_1 : = (z_1(t_0),...,z_1(T))$ are feasible for $\mcl H(t_0,x_0)$. Clearly $w(t) \in U$ for all $t$ and if we let $\mbf u = \mbf w$ then $x(0)=x_0$ and $x(t+1)=f(x(t),u(t),t)$ for all $t$. Likewise since $z_1(t_0)=x_0$ and $z_1(t+1)=f(z_1(t),u(t),t)$ for all $t$, we have $x(t)=z_1(t)\in X_t$ for all $t$. Hence $\mbf u$ and $\mbf x=\mbf z_1$ are feasible for $\mcl H(t_0,x_0)$.
	On the other hand, if $\mbf u$ and $\mbf x$ solve $\mcl H(t_0,x_0)$, then if we let $\mbf w=\mbf u$ and $\mbf z_1=\mbf x$ and define $z_2(t+1)=\phi_{t}(z_1(t),u(t), z_2(t))$, $z_2(t_0+1)=\phi_{0}(z_1(t_0),u(t_0))$, $z_2(t_0)=0$, then $\mbf w$ and $\mbf z$ are feasible. Furthermore, since $\mcl H(t_0,x_0)$ has a forward separable objective function we have,
	\begin{align*}
	&J_{t_0}(\mbf u, \mbf x)=\phi_T( z_1(T),\phi_{T-1}(z_1(T-1),w(T-1),\phi_{T-2}(....,\\ \nonumber
	&\quad  \phi_{{t_0}+1}(z_1({t_0}+1),w({t_0}+1), \phi_{t_0}(z_1(t_0),w(t_0)))),....,))). \nonumber
	\end{align*}
	However, we now observe
	\small{ \begin{align*}
		&z_2(T)= \phi_{T-1}(z_1(T-1),u(T-1),z_2(T-1)).\\
		&\vdots\\
		&z_2(t_0+1)= \phi_{t_0}(z_1(t_0),u(t_0)).\\
		&z_2(t_0)= 0.
		\end{align*} } \normalsize
	Hence we have,
	\small{
		\begin{align*}
		L_{t_0}(\mbf w, \mbf z) & = \phi_T(z_1(T),z_2(T)) \\
	&=\phi_T( z_1(T),\phi_{T-1}(z_1(T-1),w(T-1),\phi_{T-2}(....,\\ \nonumber
	&\quad  \phi_{{t_0}+1}(z_1({t_0}+1),w({t_0}+1), \phi_{t_0}(z_1(t_0),w(t_0)))),....,))) \\\nonumber
		&=J_{t_0}(\mbf u, \mbf x).
		\end{align*} } \normalsize
	Thus if $\mbf w$ and $\mbf z$ solve $\mcl A(t_0,x_0)$ with objective $L^*_{t_0}=\phi_T(z_1(T),z_2(T))$, then $\mbf w$ and $\mbf z_1$ solve $\mcl H(t_0,x_0)$ with objective value $J^*_{t_0}$.
\end{proof}

\begin{prop} \label{prop: augmentation satisfies the principle of optimality}
	The augmented DP problem $\mcl A(t_0,x_0)$ in \eqref{augmentation} satisfies the Principle of Optimality.
\end{prop}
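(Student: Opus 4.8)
The plan is to recognize that the augmented problem $\mcl A(t_0,x_0)$ in \eqref{augmentation} is, despite its appearance, nothing more than an additively separable DP problem of the form $\mcl P(t_0,x_0)$ in \eqref{eqn:DP}, so that the Principle of Optimality follows immediately from Lemma~\ref{lem: P(t,x) satisfies the principle of optimality}. The essential observation is that the objective $L_{t_0}(\mbf u, \mbf z) = \phi_T(z_1(T),z_2(T))$ depends only on the terminal augmented state and carries no running cost.

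First I would identify the data that casts $\mcl A(t_0,x_0)$ into the template $\mcl P$. I would take the augmented state to be $z(t)=(z_1(t),z_2(t))$, let the augmented vector field be the right-hand side of the dynamics in \eqref{augmentation} (combining $f$ in the first block with $\phi_t$ in the second), and set the stage costs to zero for $t_0 \le t \le T-1$ and the terminal cost to $\phi_T(z_1(T),z_2(T))$. With these choices, $L_{t_0}$ is exactly an additively separable objective in the sense of Definition~\ref{defn: additively seperbale function} and \eqref{eqn:additively seperable}, and the constraints of \eqref{augmentation} coincide with those of \eqref{eqn:DP} for the augmented vector field, with admissible state sets $X_t \times \R^{d_t}$ and input set $U$.

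Next I would address the only mild technical wrinkle, namely that the dimension of $z_2(t)$, and hence of the augmented state, varies with $t$ (since $z_2(t)\in \R^{d_t}$). This is permitted by the framework, which explicitly allows the state dimension to vary in time; moreover the proof of Lemma~\ref{lem: P(t,x) satisfies the principle of optimality} never uses a fixed state dimension, relying only on the inherited feasibility of concatenated trajectories and on additive separability of the objective, both of which hold here. Having verified that $\mcl A(t_0,x_0)$ is an instance of $\mcl P(t_0,x_0)$, I would then invoke Lemma~\ref{lem: P(t,x) satisfies the principle of optimality} to conclude that $\mcl A(t_0,x_0)$ satisfies the Principle of Optimality.

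I do not expect a genuine obstacle: the reduction is immediate and the only point requiring a sentence of care is confirming that a trivial (zero) running cost together with a terminal-only cost still fits Definition~\ref{defn: additively seperbale function}, which it does by construction. An alternative, fully self-contained route would be to repeat the contradiction argument of Lemma~\ref{lem: P(t,x) satisfies the principle of optimality} directly in the augmented variables, assuming a strictly better tail policy for $\mcl A(s,z(s))$ and splicing it onto the optimal head to contradict optimality for $\mcl A(t,z(t))$; however this merely re-derives the lemma and is unnecessary once the template identification is made.
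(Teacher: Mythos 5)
Your proposal is correct and matches the paper's own proof essentially verbatim: the paper also casts $\mcl A(t_0,x_0)$ as a special case of $\mcl P(t_0,x_0)$ with $c_i=0$ for $i\neq T$ and $c_T([z_1,z_2]^T)=\phi_T(z_1,z_2)$, then invokes Lemma~\ref{lem: P(t,x) satisfies the principle of optimality}. Your extra remark about the time-varying dimension of $z_2(t)$ is a sensible point of care that the paper's two-line proof leaves implicit.
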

\begin{proof}
	$\mcl A(t_0,x_0)$ is a special case of $\mcl P(t_0,x_0)$ \eqref{eqn:DP} where $c_i=0$ for $i \neq T$ and $c_T([z_1, z_2]^T)=\phi_T(z_1,z_2)$. Lemma \ref{lem: P(t,x) satisfies the principle of optimality} shows DP problems of the form $\mcl P(t_0,x_0)$ satisfy the Principle of Optimality.
\end{proof}

Lemma \ref{lem: augmented optimization is equivalent to original} tells us that for any DP problem with forward separable objective, $\mcl H(t_0, x_0)$ \eqref{opt:forward_sep}, there exists an equivalent DP problem of the form $\mcl A(t_0, x_0)$ \eqref{augmentation}. Furthermore Proposition \ref{prop: augmentation satisfies the principle of optimality} shows that $\mcl A(t_0, x_0)$ satisfies the Principle of Optimality. Therefore a solution for $\mcl H(t_0, x_0)$ can be found by recursively solving Bellman's equation \eqref{eqn:bellman} for $\mcl A(t_0, x_0)$.

To understand the augmented approach intuitively, we note that DP breaks a multi-period planning problem into simpler sub-problems at each stage. However, for non-separable problems, to make the correct decision at each stage we need past information about the system. In this context, the augmented state contains the information from the trajectory history necessary to make the correct decision at the present time. However by adding augmented states we increase the state space dimension and the complexity of the DP problem.

\begin{cor} \label{cor: the dimensions of the augmented problem}
	Suppose the forward separable function, $J: \R^{m \times (T-t_0)} \times \R^{n \times (T+1 -t_0)} \to \R $, is the objective function for DP problem $\mcl H(t_0, x_0)$ \eqref{opt:forward_sep} and has a representation dimension of $l$. Then the associated augmented DP problem with this representation,  $\mcl A(t_0, x_0)$ \eqref{augmentation}, has a state space of dimension $l+n$ and input space of dimension $m$.
\end{cor}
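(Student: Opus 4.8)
The plan is to read off the dimensions directly from the structure of the augmented state in problem $\mcl A(t_0,x_0)$ \eqref{augmentation}, since Corollary~\ref{cor: the dimensions of the augmented problem} is essentially a bookkeeping consequence of the construction. The augmented state at each time $t$ is the pair $z(t)=\bmat{z_1(t)\\z_2(t)}$, where $z_1(t)\in\R^n$ tracks the original state via $z_1(t+1)=f(z_1(t),u(t),t)$, and $z_2(t)\in\R^{d_t}$ accumulates the partially evaluated representation maps via $z_2(t+1)=\phi_t(z_1(t),u(t),z_2(t))$. The input $u(t)\in U\subset\R^m$ is unchanged from $\mcl H(t_0,x_0)$, so the input space dimension is immediately $m$. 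The only real content is to compute the state space dimension of $\mcl A(t_0,x_0)$, which by the convention established just before Definition~\ref{defn: additively seperbale function} is $\max_{t_0\le t\le T}\dim(z(t))$.

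First I would fix an arbitrary time $t$ and observe $\dim(z(t))=\dim(z_1(t))+\dim(z_2(t))=n+\dim(z_2(t))$. By the construction in \eqref{augmentation}, $z_2(t)\in\R^{d_t}$ where $d_t=\dim(Im\{\phi_{t-1}\})$, matching exactly the per-stage dimensions $d_i$ in the definition of forward separability. Next I would invoke the hypothesis that $J$ has representation dimension $l$, which by definition means $l=\max_{i\in\{t_0,\dots,T-1\}}\{d_i\}$ with $d_i=\dim(Im\{\phi_i\})$. Taking the maximum over $t$, we get $\max_t\dim(z(t))=n+\max_t\dim(z_2(t))=n+\max_t d_t=n+l$, so the state space dimension is $n+l$ as claimed.

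The one point requiring care is the indexing at the two endpoints, which is where I expect the only friction. At the initial stage the augmented state is $z(t_0)=\bmat{x_0\\0}$, so $z_2(t_0)=0$ lives in a trivial (zero-dimensional) space and does not contribute to the maximum. At the terminal stage, $\phi_T$ maps into $\R$, so $z_2(T)$ has dimension $1$, which is dominated by $l$ provided $l\ge 1$; this holds since each $\phi_i$ has image dimension $d_i\ge 1$. I would also note the off-by-one bookkeeping that $z_2(t)\in\R^{d_t}$ is fed by $\phi_{t-1}$, so that the maximum over the active intermediate stages ranges over precisely the index set $\{t_0,\dots,T-1\}$ appearing in the definition of $l$. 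Once these endpoint cases are checked to not exceed $n+l$, the identity $\max_{t_0\le t\le T}\dim(z(t))=n+l$ follows, completing the proof.
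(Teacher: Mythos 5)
Your main computation is correct and is essentially the paper's own proof: read off from \eqref{augmentation} that the augmented state at time $t$ is $(z_1(t),z_2(t))\in\R^n\times\R^{d_t}$ with $d_t=\dim(Im\{\phi_{t-1}\})$, observe the input space is untouched (dimension $m$), and invoke the definition of representation dimension to get $\max_{t_0\le t\le T}\dim(z(t))=n+\max_{i\in\{t_0,\dots,T-1\}}d_i=n+l$. The paper's proof is exactly this bookkeeping, stated more tersely.

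However, your ``terminal stage'' remark is wrong, and it contradicts your own (correct) off-by-one sentence. The state $z_2(T)$ is \emph{not} the output of $\phi_T$: it is the output of $\phi_{T-1}$, i.e., $z_2(T)=\phi_{T-1}(z_1(T-1),u(T-1),z_2(T-1))\in\R^{d_{T-1}}$. The scalar $\phi_T(z_1(T),z_2(T))$ is the objective value $L_{t_0}$ of \eqref{augmentation}, not a component of the state. So $z_2(T)$ does not have dimension $1$; it carries exactly the term $d_{T-1}$ of the maximum defining $l$. This is not a cosmetic point, because the corollary asserts an exact equality: if you excluded $t=T$ from the stages contributing the $d_i$'s and treated it as a dimension-$1$ stage, then in the case where $d_{T-1}$ is the strict maximizer of $\{d_{t_0},\dots,d_{T-1}\}$ your maximum would come out as $\max\{\,n+\max_{i\le T-2}d_i,\;n+1\,\}<n+l$, contradicting the claimed dimension. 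The correct bookkeeping is the one in your last sentence: the states $z_2(t_0+1),\dots,z_2(T)$ have dimensions $d_{t_0},\dots,d_{T-1}$ respectively, so the maximum over all stages $t_0+1\le t\le T$ is exactly $l$, and no separate check at $t=T$ is needed (the $t=t_0$ endpoint, $z_2(t_0)=0$, is handled correctly as you state). Replace the terminal-stage sentence with this observation and the proof is complete.
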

\begin{proof}
	{From the definition of $\mcl A(t_0, x_0)$ \eqref{augmentation}, the state space dimension is $n+\max_{t_0 \le t \le T}$, where $d_t= \dim(Im\{\phi_{t-1}\})$. From the definition of representation dimension, we have $\max_{t_0 \le t \le T}d_t= l$  and hence it follows that the state space dimension is $n + \max_{t_0 \le t \le T} d_t = n+l$.  }
\end{proof}

\section{{A Class Of DP For Which The Use Of State Augmentation Is Tractable }} \label{sec: when we can use state augmentation}
{It is well known that discretization of the state space combined with a solution of Bellman's equation {become computationally intractable} when the discretized dimension increases; this is often called ``the curse of dimensionality''. In the previous Section, we proved that any non-separable DP of state space dimension $n$ can be converted to a separable augmented DP with state-space dimension $n+l$, where $l$ is the representation dimension of the objective function. However, for some representations, $l$ may increase as the time interval increases - thus triggering the curse of dimensionality. To address this problem, in this section, we define a class of forward separable objective functions, called Naturally Forward Separable (NFS) functions, with representation dimension, $l$, which is independent of the number of time steps and the dimension of the state and input space.   }

{Before we define NFS functions we motivate this new class of functions by showing that it is possible to represent any function as a forward separable function}. To do this we introduce some additional notation. Specifically, for a vector $v=(v_1,...,v_n)^T \in \R^n$ we define $[v]_{i}^j=(v_i,...,v_j)$ for $1 \le i <j \le n$.

\begin{lem} \label{lem: any function is forward seperable}
	Any function $J: \R^{m \times (T-t_0)} \times \R^{n \times (T+1 -t_0)} \to \R $ is forward separable with a representation of dimension $l(n,m,T -t_0)=(T-t_0)(n+m)$.
\end{lem}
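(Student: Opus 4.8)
The plan is to exhibit a concrete family of representation maps $\{\phi_i\}$ that simply record the entire past trajectory of states and inputs, postponing every evaluation of $J$ until the terminal map $\phi_T$. Since $J$ is an \emph{arbitrary} function of the full sequences $\mbf u$ and $\mbf x$, with no structural assumptions to exploit, the only way to reproduce it through a forward composition is to carry the complete history forward and apply $J$ once, at the very end; this is exactly what the construction below does.

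Concretely, I would define the innermost map to store the first state-input pair,
\begin{align*}
\phi_{t_0}(x,u) = (x,u) \in \R^{n+m},
\end{align*}
and for $i = t_0+1,\dots,T-1$ define the intermediate maps to append the current pair to the accumulated history,
\begin{align*}
\phi_i(x,u,w) = (w,x,u) \in \R^{d_{i-1}+n+m}.
\end{align*}
The terminal map $\phi_T(x,w)$ receives the final state $x = x(T)$ together with the accumulated history $w$; since $w$ encodes $\mbf u$ together with $(x(t_0),\dots,x(T-1))$, I may recover the full sequences and set $\phi_T(x,w) := J(\mbf u, \mbf x)$. This is well defined because every $(x,w) \in \R^n \times \R^{d_{T-1}}$ decodes uniquely into an input sequence and a state sequence.

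Next I would verify, by a short induction on $i$, that the subexpression rooted at $\phi_i$ in the nested composition evaluates to $a_i = (x(t_0),u(t_0),\dots,x(i),u(i))$: the base case is $a_{t_0} = \phi_{t_0}(x(t_0),u(t_0)) = (x(t_0),u(t_0))$, and the inductive step is immediate from $a_i = \phi_i(x(i),u(i),a_{i-1}) = (a_{i-1},x(i),u(i))$. At the top level the accumulator equals $a_{T-1} = (x(t_0),u(t_0),\dots,x(T-1),u(T-1))$, so $\phi_T(x(T),a_{T-1}) = J(\mbf u,\mbf x)$ by construction, which establishes the forward separable representation \eqref{forward_sep_def}.

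Finally I would compute the representation dimension. Each $\phi_i$ is a concatenation map and hence surjective onto its codomain, so $d_i = \dim(Im\{\phi_i\}) = d_{i-1}+n+m$ with $d_{t_0} = n+m$; solving this recursion gives $d_i = (i-t_0+1)(n+m)$, which is increasing in $i$ and therefore maximized at $i = T-1$, yielding $l = \max_i d_i = d_{T-1} = (T-t_0)(n+m)$, as claimed. I do not anticipate a genuine obstacle: the content of the lemma is the mere \emph{existence} of a forward separable representation, and the only points requiring care are confirming that $\phi_T$ is well defined as a function on all of $\R^n \times \R^{d_{T-1}}$ and that the dimension bookkeeping returns exactly $(T-t_0)(n+m)$ rather than a count that accidentally double-counts or omits the terminal state $x(T)$, which is passed to $\phi_T$ separately rather than stored in the accumulator.
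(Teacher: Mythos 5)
Your proposal is correct and is essentially the paper's own proof: both constructions use the representation maps as memory that concatenates the full state-input history forward in time and defer all evaluation of $J$ to the terminal map $\phi_T$, with the same dimension count $d_i = (i-t_0+1)(n+m)$ peaking at $(T-t_0)(n+m)$. The only difference is cosmetic (you interleave the stored pairs $(x(i),u(i))$ while the paper keeps states and inputs grouped in separate blocks), and your explicit induction and decoding argument just spells out what the paper dismisses as "clearly".
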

\begin{proof}
Consider a function $J: \R^{m \times (T-t_0)} \times \R^{n \times (T+1 -t_0)} \to \R $. To show $J$ is forward separable we define a forward separable representation $\{ \phi_i\}_{i=t_0}^T$ which satisfy \eqref{forward_sep_def} as follows.

First, define $\phi_{t_0}: \R^n \times \R^m \to \R^{n+m}$ as
 \begin{align*}
 \phi_{t_0}(x,u)=[x^T , u^T] = \bmat{x_1,...,x_n,  u_1,...,u_m}.
\end{align*}

For $i \in \{t_0 +1,...T-1\}$ the define $\phi_i: \R^n \times \R^m \times \R^{(i-t_0)(n+m)}\to \R^{(i+1 -t_0)(n+m)}$ as
\begin{equation*}
\phi_i(x,u,w) =\bmat{[w]_1^{n(i-t_0)}, x^T, [w]_{n(i-t_0)+1}^{(i-t_0)(n+m)}, u^T}.
\end{equation*}

Lastly, define $\phi_T:\R^{n} \times \R^{(T-t_0)(n+m)} \to \R $ as
\begin{equation*}
\phi_T(x,w)=J([[w]_1^{n(T-t_0)},x],[w]_{n(T-t_0)+1}^{(n+m)(T-t_0)}).
\end{equation*}
Clearly, this definition of $\phi_i$ satisfies~\eqref{forward_sep_def}. Furthermore, it can be seen that the maximum dimension of the images of the maps $\{ \phi_i\}_{i=t_0}^T$ is $(T-t_0)(n+m)$ showing the dimension of this representation of $J$ is $l(n,m,T-t_0)=(T-t_0)(n+m)$.
\end{proof}


In the above approach to show that $J(\mbf u, \mbf x)$ is forward separable we naively took the strategy of using the functions $(\phi_i)_{t_0 \le i \le T}$ to act like memory functions; that is to store the entire historic state trajectory and input sequence used. If $J(\mbf u, \mbf x)$ is the objective function for some DP problem of form  $\mcl H(t_0, x_0)$ \eqref{opt:forward_sep} then this approach would result in the associated augmented DP problem,  $\mcl A(t_0, x_0)$ \eqref{augmentation}, having a very large state space dimension. Specifically, Corollary \ref{cor: the dimensions of the augmented problem} shows that $\mcl A(t_0, x_0)$ has state space dimension $(T-t_0)(n+m) +n$. Clearly, for a large number of time-steps, $T-t_0$, $\mcl A(t_0, x_0)$ is intractable. For this reason we next define a special class of forward separable functions that have a representation with dimension independent of the number of time-steps.


\begin{defn} \label{def: independent augmented dimension}
	We say a function $J: \R^{m \times (T-t_0)} \times \R^{n \times (T+1 -t_0)} \to \R $ is a \textit{Naturally Forward Separable} (NFS) function if there exists maps, $\{ \phi_i\}_{i=t_0}^T$, that satisfy \eqref{forward_sep_def} with representation dimension independent of $n$, $m$ and $T$.
\end{defn}

\subsection{{An Algebra Of Naturally Forward Separable Functions}} \label{subsec: algebra of NFSF's}

{Given a function, $J: \R^{m \times (T-t_0)} \times \R^{n \times (T+1 -t_0)} \to \R$, there is no obvious way to determine whether $J$ is NFS. Instead, in this section, we show that the set of NFS functions form an algebra closed under pointwise multiplication and which is preserved under nonlinear transformation - implying that simple NFS functions (`building blocks') can be combined to construct new, more complex, NFS functions. In this way, one might approach the problem of finding representation maps for a function, $J$, by combining known NFS ``building blocks''. Several examples of such ``building blocks'' can be found in Subsection~\ref{sec: Examples of Natutally Forward Seperable Functions}. We first prove closure under addition and pointwise multiplication.}

\begin{lem} \label{lem: addition of NFSF is NFSF}
	{Consider the function $U: \R \to \R$ and the NFS function, $J_1: \R^{m_1 \times (T_1-t_1)} \times \R^{n_1 \times (T_1+1 -t_1)} \to \R $ and $J_2: \R^{m_2 \times (T_2-t_2)} \times \R^{n_2 \times (T_2+1 -t_2)} \to \R $, with representation dimensions $l_1$ and $l_2$ respectively. The functions $G_1(\mbf u, \mbf x)= J_1(\mbf u, \mbf x) + J_2(\mbf u, \mbf x)$, $G_2(\mbf u, \mbf x)=J_1(\mbf u, \mbf x) \cdot J_2(\mbf u, \mbf x)$ and $G_3(\mbf u, \mbf x)=U\left(J_1(\mbf u, \mbf x) \right)$ are NFS functions with representation dimension less than or equal to $l_1+l_2$, $l_1+l_2$, and $l_1$, receptively}.
\end{lem}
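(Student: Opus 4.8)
The plan is to prove all three closure properties by explicitly building forward separable representations for $G_1$, $G_2$, and $G_3$ out of the given representations $\{\phi_i^1\}$ of $J_1$ and $\{\phi_i^2\}$ of $J_2$, and then reading off the resulting representation dimensions. Since $G_1$ and $G_2$ are only defined when $J_1$ and $J_2$ share a common domain, I would first fix that common domain (common $n$, $m$, $t_0$, $T$). By the NFS hypothesis the dimensions $l_1$ and $l_2$ are independent of $n$, $m$, $T$, so any bound obtained in terms of $l_1$ and $l_2$ is again independent of $n$, $m$, $T$ — which is exactly the condition in Definition~\ref{def: independent augmented dimension} that must be checked to conclude that the constructed functions are NFS.

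For the sum $G_1=J_1+J_2$, the key idea is to run the two representations in parallel by stacking their intermediate vectors. I would set $\phi_{t_0}^{G_1}(x,u)=(\phi_{t_0}^1(x,u),\phi_{t_0}^2(x,u))$; for the interior stages $\phi_i^{G_1}(x,u,(w^1,w^2))=(\phi_i^1(x,u,w^1),\phi_i^2(x,u,w^2))$; and at the terminal stage $\phi_T^{G_1}(x,(w^1,w^2))=\phi_T^1(x,w^1)+\phi_T^2(x,w^2)$. The block structure guarantees that the first block of the augmented vector only ever feeds the $\phi_i^1$ maps and the second only the $\phi_i^2$ maps, so unwinding the nested composition in \eqref{forward_sep_def} reproduces $J_1(\mbf u,\mbf x)$ in the first block and $J_2(\mbf u,\mbf x)$ in the second, whose sum is delivered by $\phi_T^{G_1}$. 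Since the intermediate dimension at stage $i$ is $d_i^1+d_i^2$, the representation dimension is $\max_i(d_i^1+d_i^2)\le \max_i d_i^1+\max_i d_i^2=l_1+l_2$. The product $G_2$ uses the identical initial and interior maps and replaces only the terminal map by $\phi_T^{G_2}(x,(w^1,w^2))=\phi_T^1(x,w^1)\cdot\phi_T^2(x,w^2)$, giving the same bound $l_1+l_2$. The composition $G_3=U(J_1)$ is simpler still: I keep every map of the $J_1$-representation unchanged except the terminal one, setting $\phi_T^{G_3}(x,w)=U(\phi_T^1(x,w))$. Because only the last map is altered — and its image lies in $\R$, so it does not enter the maximum defining the representation dimension — the representation dimension remains $l_1$.

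The bulk of the work, and the only place needing care, is verifying that the block-stacked nested composition genuinely equals $J_1+J_2$ (respectively the product). I would do this by a short induction: letting $w_i^1$ and $w_i^2$ denote the first and second blocks produced after the $i$-th application of $\phi_i^{G_1}$, an induction on $i$ shows that $w_i^1$ equals the value produced by the $J_1$-representation alone and $w_i^2$ the value produced by the $J_2$-representation alone, precisely because each $\phi_i^{G_1}$ acts block-diagonally on its augmented argument. Once this invariant is established, evaluating $\phi_T^{G_1}$ (respectively $\phi_T^{G_2}$, $\phi_T^{G_3}$) yields the claimed function. The only subtlety to flag is keeping the indexing of the two representations aligned across the common time horizon; this is purely bookkeeping once the common domain is fixed, so I expect the main obstacle to be notational care rather than any conceptual difficulty.
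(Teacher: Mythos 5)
Your proposal is correct and follows essentially the same route as the paper's proof: stacking the two representations block-diagonally through the interior stages, combining them only in the terminal map (sum, product, or $U\circ{}$), and bounding the dimension by $\max_i(d_i+s_i)\le l_1+l_2$. The only difference is cosmetic — you spell out the inductive verification that the block structure reproduces $J_1$ and $J_2$ separately, which the paper treats as immediate from the construction.
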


\begin{proof}
	{	For simplicity let us consider the case where $t_1 = t_2$ and $T_1 = T_2$; other cases follow by the same argument. As $J_1$ and $J_2$ are forward separable functions there exist associated representations $\{g_i\}$ and $\{h_i\}$ such that $J_1$ and $J_2$ can be written {in} the form \eqref{forward_sep_def} and with associated representation dimensions $l_1$ and $l_2$, respectively. We now show that $G_1$ is forward separable by defining the associate representation $\{\phi_i \}$ such that $G_1$ can be written {in} the form \eqref{forward_sep_def}. Specifically, let
	\begin{align} \label{eqn:two_forward_sep}
	& \phi_{t_1}(x,u)= \bmat{g_{t_1}(x,u)\\h_{t_1}(x,u)},\\ \nonumber
	& \phi_{i}(x,u,w)= \bmat{g_{i}(x,u,[w]_1^{d_{i-1}})\\h_{i}(x,u,[w]_{d_{i-1} +1}^{d_{i-1} + s_{i-1}})} \text{ for } i \in \{t_1 +1,....,T_1-1\}\\ \nonumber
	& 	\phi_{T_1}(x,w) = g_T(x,[w]_{1}^{d_{T_1 -1}}) + h_T(x,[w]_{d_{T_1 -1}+1}^{d_{T_1 -1} + s_{T_1 -1} }),
	\end{align}
	where $d_i= \dim(Im\{g_{i}\}) $ and $s_i = \dim(Im\{h_{i}\})$ for $i \in \{t_1,...,T_1-1\}$.}
	
	{We conclude that $G_1$ has a representation dimension, denoted $l_{G_1}$, such that
	\begin{align*}
	l_{G_1} & = \max_{i \in \{t_1,...,T_1-1\}} \{d_i + s_i\} \\
	& \le \max_{i \in \{t_0,...,T-1\}} \{d_i\} + \max_{i \in \{t_0,...,T-1\}} \{s_i\} \\
	& = l_1+ l_2.
	\end{align*}
	Furthermore, by a similar argument it can be shown that $G_2$ and $G_3$ are NFS with representation dimension less than or equal to $l_1+ l_2$. We are able to show this using the same representation maps $\{\phi_i\}_{t_1 \le i \le T_1 -1}$ from \eqref{eqn:two_forward_sep} with the terminal representation map for $G_2$ given by
\[
\phi_{T_1}(x,w) = g_T\left(x,[w]_{1}^{d_{T_1 -1}}\right)\cdot h_T\left(x,[w]_{d_{T_1 -1}+1}^{d_{T_1 -1} + s_{T_1 -1} }\right),
\]
and the terminal representation map for $G_3$ given by
\[
\phi_{T_1}(x,w) = U\left(g_T\left(x,[w]_{1}^{d_{T_1 -1}}\right)\right).
\] }	
\end{proof}

\subsection{{Simple Examples Of NFS Functions}} \label{sec: Examples of Natutally Forward Seperable Functions}
{The first example of a NFS function is found in problems involving risk measures and certainty equivalents~\cite{bauerle2013more}. In this case, we have the function $U(x)= \frac{1}{\gamma}e^{\gamma x}$ and apply the following:}
\begin{ex} \label{ex: utility function}
	{For any functions $U: \R \to \R$ and $c_t:\R^n\times \R^m\rightarrow \R$,
	\[
	J(\mbf u, \mbf x)= U \left( \sum_{t=t_0}^{T-1}{c_{t}(x(t),u(t))} \right)
	\]
is NFS with representation dimension 1.
}
\end{ex}
\begin{proof}
	{The additively separable function $\sum_{t=t_0}^{T-1}{c_{t}(x(t),u(t))}$ is NFS using the representation maps given in \eqref{eqn: addative functions are forward sep}. It therefore follows $J$ is NFS by Lemma \ref{lem: addition of NFSF is NFSF}.}
\end{proof}
\begin{ex}
{	The mixed p-norm function given by
	\[
	J(\mbf u, \mbf x)= \sum_{j=1}^N \left( \sum_{t=t_0}^{T-1}{c_{j,t}(x(t),u(t))^{p_j}} \right)^{\frac{1}{p_j}},
	\]
	where $p_j>0$ for all $j \in \{1,...,N\}$, $c_{j,t}:\R^n\times \R^m\rightarrow \R^+$, and $N \in \N$, is NFS with representation dimension $N \in \N$.}
\end{ex}

\begin{proof}
	Follows as $J(\mbf u, \mbf x)$ that can be written {in} the form of~\eqref{forward_sep_def} using maps $\phi_{t_0}(x,u)= [c_{1,t_0}(x(0),u(0))^{p_1},...,c_{N,t_0}(x(0),u(0))^{p_N}]^T$, $\phi_{i}(x,u,w)= [c_{1,i}(x(i),u(i))^{p_1} +w_1 ,...,c_{N,i}(x(i),u(i))^{p_N} + w_N]^T$ for all $i \in \{t_0+1,...,T-1\}$, and $\phi_{T}(x,w)= \sum_{j=1}^N w_j^{\frac{1}{p_j}}$.
\end{proof}


{We next give a NFS function that can be considered a discrete time version of the Green measure; when used as an objective function for a DP problem it measures the amount of time the state and input spend in some set.  }

 \begin{ex} \label{NFSF 1} Consider the function $ J: \R^{m \times (T-t_0)} \times \R^{n \times (T+1-t_0)} \to \R$ defined as
{	\[
 	J(\mbf u, \mbf x)=| \{ i\in \{t_0,...,T\} : (x(i),u(i)) \in S \} |
 	\] }
 	where $\mbf u=(u(t_0),...,u(T-1))$, $u(t) \in \R^m$, $\mbf x =(x(t_0),...,x(T))$, $x(t) \in \R^n$, $S \subset \R^n \times \R^m$ and for $B \subset \N$ we denote $| B|$ to be the cardinality of the set $B$. Then $J$ is NFS and has a representation of dimension 1.
 \end{ex}
\begin{proof}
	We present functions such that $J(\mbf u, \mbf x)$ that can be written {in} the form of~\eqref{forward_sep_def}.
	
	Define $\phi_{t_0}: \R^n \times \R^m \to \R$ as
	\[ \phi_{t_0}(x,u)= \begin{cases}
	1 & \text{if } (x,u) \in S \\
	0 & \text{otherwise}
	\end{cases}.
		\]

Define $\phi_t: \R^n \times \R^m \times \R \to \R$ for  $1 \le t \le T -1$ as

	\[ \phi_t(x,u, w)= \begin{cases}
w + 1 & \text{if } (x,u) \in S \\
w & \text{otherwise.}
\end{cases} \]
	
Define the function $\phi_T: \R^n \times \R \to \R$ as

\[\phi_T(x, w)= \begin{cases}
w + 1 & \text{if } (x,u) \in S \\
w & \text{otherwise.}
\end{cases}\]
This definition of $\phi_i$ satisfies~\eqref{forward_sep_def}. Moreover it can be seen that the maximum dimension of the images of the maps $\{ \phi_i\}_{i=t_0}^T$ is $1$ implying that the dimension of this representation of $J$ is 1.
 \end{proof}
 \begin{ex} \label{NFSF 2} Consider the variance type function,  $ J: \R^{m \times T} \times \R^{n \times (T+1)} \to \R$ defined as
 	\begin{equation} \label{eqn: var type functions}
 	 	J(\mbf u, \mbf x)= \sum_{t=0}^{T} \left[ a_t(x(t)) - \frac{1}{T} \sum_{s=0}^{T} a_s(x(s))   \right]^2
 	\end{equation}
 	where  $\mbf u=(u(0),...,u(T-1))$, $u(t) \in \R^m$, $\mbf x =(x(0),...,x(T))$, $x(t) \in \R^n$, and $a_t:\R^n \to \R$. Then $J$ is NFS and has a representation dimension of 2.
 \end{ex}
 \begin{proof}
 	Expanding the right hand side of \eqref{eqn: var type functions} as in \cite{Sniedovich_1993} we get,
 	\small{
 	\begin{align*}
& J(\mbf u, \mbf x)  \\
&=\sum_{t=0}^{T} \left[ a_t^2(x(t)) - \frac{2}{T}a_t(x(t))\sum_{s=0}^{T} a_s(x(s)) + \frac{1}{T^2} \left(\sum_{s=0}^{T} a_s(x(s))\right)^2 \right] \\
 & =  \sum_{t=0}^{T} a_t^2(x(t)) - \frac{1}{T} \left[ \sum_{s=0}^{T} a_s(x(s)) \right]^2.
 	\end{align*} } \normalsize
 	We now present functions $J(\mbf u, \mbf x)$ that can be written {in} the form of~\eqref{forward_sep_def}.
 		We define $\phi_{t_0}: \R^n \times \R^m \to \R^2$ as
 	 	\begin{equation}
 	 	\phi_0(x,u) = \bmat{a_1^2(x) \\a_1(x)}.  \nonumber
 	 	\end{equation}
 		We define $\phi_{i}: \R^n \times \R^m \times \R^2 \to \R^2$ as
 	\begin{equation}
 	\phi_i(x,u,[w_1,w_2]^T)= \bmat{w_1 +a_i^2(x) \\ w_2  +a_i(x)} \text{ for } 1\le i \le T-1. \nonumber
 	\end{equation}
 	 		Finally, $\phi_{T}: \R^n \times \R^2 \to \R$ is given by,
 	\begin{equation*}
 	\phi_T(x,[w_1,w_2]^T)= (w_1 + a_T^2(x)) - \frac{1}{T} \left( w_2+ a_T(x) \right)^2.
 	\end{equation*}
This definition of $\phi_i$ satisfies~\eqref{forward_sep_def}. Moreover it can be seen that the maximum dimension of the images of the maps $\{ \phi_i\}_{i=t_0}^T$ is $2$ showing the dimension of this representation of $J$ is 2. \end{proof}

 We now show that the maximum function, that appears in the objective function of the battery scheduling problem in Section \ref{sec:battery1}, is NFS.
 \begin{ex} \label{lem: sup forward sep}
 	Consider the function $ J: \R^{m \times T} \times \R^{n \times (T+1)} \to \R$ such that,
 	\[
 J(\mbf u, \mbf x)=\max\{\max_{0 \le k \le T-1}\{c_k(u(k),x(k))\}, c_T(x(T))\}
 \]
 where $\mbf u=(u(0),...,u(T-1))$, $u(t) \in \R^m$, $\mbf x =(x(0),...,x(T))$, $x(t) \in \R^n$, $c_k: \R^m \times \R^n \to \R$ for $0 \le k \le T-1$ and $c_T:\R^n \to \R$. Then $J$ is NFS and has a representation dimension of 1.
 \end{ex}
 \begin{proof} \small{ \begin{align*}
 	J(\mbf u, & \mbf x) = \max\{\max_{0 \le k \le T-1}\{c_k(u(k),x(k))\}, c_T(x(T))\}\\
 	& = \max\{c_T(x(T)),\max\{c_{T-1}(u(T-1),x(T-1)),\cdots\\
 &\qquad \max\{..,\max\{c_1(u(1),x(1)),\max\{c_0(u(0),x(0))\}\},..\}\}.
 	\end{align*} } \normalsize
It is now clear we can write $J$ {in} the form of \eqref{forward_sep_def} as follows. The function $\phi_{t_0}: \R^n \times \R^m \to \R$ is defined by,
	\begin{equation*}
	\phi_{t_0}(x,u)=c_{t_0}(x,u).
	\end{equation*}
	
The function $\phi_{i}: \R^n \times \R^m \times \R \to \R$ is defined by,
	\begin{equation*}
	\phi_i(x,u,w)=\max(c_i(x,u),w) \text{ for } t_0 +1 \le i \le T-1.
	\end{equation*}
	
The function $\phi_{T}: \R^n \times \R \to \R$ is defined by,
\begin{equation*}
\phi_T(x,w) = \max (c_T(x),w).
\end{equation*}

This definition of $\phi_i$ satisfies~\eqref{forward_sep_def}. Moreover it can be seen that the maximum dimension of the images of the maps $\{ \phi_i\}_{i=t_0}^T$ is $1$ showing the dimension of this representation of $J$ is 1. \end{proof}

\section{{Solving Deterministic Additively Separable DP Problems}} \label{sec:numerical}
In Section \ref{sec:ADP} we showed that all forward separable problems of the form $\mcl H(t_0,x_0)$ have an equivalent DP problem of the form $\mcl A(t_0,x_0)$. Problems of the form $\mcl A(t_0,x_0)$ are special cases of problems of the form $\mcl P(t_0,x_0)$. In this section we address the problem of implementation by numerically solving problems of the form $\mcl P(t_0,x_0)$.

For implementation, we use an approximation scheme that maps our class of DP problems to a much simpler class of DP problems with finite state and control spaces. It is known for DP problems with finite state and control spaces that the infimum in Bellman's equation \eqref{eqn:bellman} is attained and the optimal cost to go function, $F(x,t)$, can be computed by enumeration. Similar numerical schemes with convergence proofs can be found in \cite{Jones_2018} and~\cite{Dufour_2012}.

\subsection{Construction Of Approximated Tractable DP Problems}
 Consider the DP problem $\mcl P(t_0,x_0)$ \eqref{eqn:DP} with compact state and control spaces of the form $X=[\underline{x},\bar{x}]^n$ and $U=[\underline{u},\bar{u}]^m$. For DP problems of this form it is not generally possible to solve Bellman's Equation \eqref{eqn:bellman}. We thus need to consider a sequence of ``close'' DP problems with countable state and control spaces. We define a sequence of approximated DP problems indexed by $k$ and denoted by $\mcl P_k(t_0,x_0)$,
 \begin{align}
 &\min_{\mathbf u, \mathbf x} J_{t_0}(\mathbf u, \mathbf x)= \sum_{t=t_0}^{T-1}{c_{t}(x(t),u(t))} + c_{T}({x(T)}) \label{eqn: approx opt}\\ \nonumber
 &\text{subject to:  }  \\ \nonumber
  & x(t+1)=argmin_{y \in X_k}\{||y-f(x(t),u(t),t)||_2\},  \\ \nonumber
 & x(t_0)=x_0 , \text{ } x(t) \in {X}_k \subset \mathbb{R}^n, \text{ } u(t) \in {U}_k \subset \mathbb{R}^m \text{ for  } t=t_{0},..,T, \\ \nonumber
 & \mbf u=(u(t_0),...,u(T-1)) \text{ and } \mbf x =(x(t_0),...,x(T)),
 \end{align}
 where ${X}_k=\{x_1,...,x_k\}^n$ such that $\underline{x}=x_1<x_2<...<x_k=\bar{x}$ and $||x_{i+1}-x_{i}||_2=\frac{\bar{x}-\underline{x}}{k}$ for $1 \le i \le k-1$, and ${U}_k=\{u_1,...,u_k\}^m$ such that $\underline{u}=u_1<u_2<...<u_k=\bar{u}$ and $||u_{i+1}-u_{i}||_2=\frac{\bar{u}-\underline{u}}{k}$ for $1 \le i \le k-1$.

{Note that our approximation scheme is based on the discretization of the state and input space and as such is subject to the ``curse of dimensionality". However, discretization-based schemes for solving Bellman's equation have efficient parallel implementations - see~\cite{maidens2016parallel}. Alternatively, other Approximate Dynamic Programming (ADP) schemes such as \cite{rust1997using} use grid sampling and can be shown to converge with respect to expected cost.}



\subsection{Constructing A Feasible Policy From The Solution Of The Approximated DP Problem}

By iteratively solving Bellman's equation \eqref{eqn:bellman} we can find an optimal solution to $\mcl P_k(t_0,x_0)$ which we denote as $(\mbf x^*_k, \mbf u^*_k)$. Because the vector fields that define the underlying dynamics of $\mcl P(t_0,x_0)$ and $\mcl P_k(t_0,x_0)$ are different, the solution $(\mbf x^*_k, \mbf u^*_k)$ is not necessarily feasible for $\mcl P(t_0,x_0)$. However using an optimal policy for $\mcl P_k(t_0,x_0)$, $\pi^*_k$, we can construct a feasible policy for $\mcl P(t_0,x_0)$ in the following way,
\begin{equation} \label{eqn:constructed policy}
\theta_k(x,t)= \arg \min_{u \in \Gamma_{t,x} } ||\pi^*_k( \arg \min_{y\in {X_k}}\{||y-x||_2\},t) - u ||_2 \in \Pi
\end{equation}
where we recall $\Gamma_{t,x}$ is the set of feasible inputs such that if $u \in \Gamma_{t,x}$ then $u \in U$ and $f(x,u,t) \in X$ for the DP problem $\mcl P(t_0,x_0)$ \eqref{eqn:DP}.

\subsection{Convergence Of Our Constructed Policy}
Suppose $\theta_k(x,t)$, from \eqref{eqn:constructed policy}, is a feasible policy for $\mcl P(t_0,x_0)$ constructed from {an} optimal policy of $\mcl P_k(t_0,x_0)$ using \eqref{eqn:constructed policy}. Let $\mathbf u_k = (\theta_k(x_0,t),...,\theta_k(x_k(T-1),T-1) )$ and $ \mathbf x_k = (x_k(t_0),...,x_k(T))$ where $x_k(t_0)=x_0$, $x_k(t+1)=f(x_k(t),\theta_k(x_k(t),t),t)$ and $f$ is the vector field from $\mcl P_k(t_0,x_0)$. {From Theorem 2 in \cite{Jones_2018} if $\mcl P(t_0,x_0)$ satisfies certain continuity assumptions} then it is known that 
\begin{equation} \label{eqn:convergance conjecture} \lim_{k \to \infty} | J_{t_0}(\mathbf u_k, \mathbf x_k) - J_{t_0}^*|=0,
\end{equation} where $J_{t_0}(\mathbf u_k, \mathbf x_k) $ is the resulting value objective function of $\mcl P(t_0,x_0)$ when the policy $\theta_k$ is used and $J_{t_0}^*$ is the optimal value of the objective function.
\section{Stochastic DP And Optimal Policies} \label{sec:SDP}
%

{As shown in Counterexample \ref{cex: max obj does not satisfy principl} there exist non-separable deterministic DP problems that do not satisfy the Principle of Optimality as formulated in Definition \ref{defn: principle of optimality deterministic}. This Principle Of Optimality stated that the optimality of an input sequence for any instantiation of the sequence of DP problems is inherited by every subsequent instantiation - implying that recursive use of Bellman's equation will eventually return an input which solves the DP problem for every instantiation.}

In the stochastic case, however, additively separable DP problems may not satisfy this version of Principle of Optimality. Specifically, we show that even in the case of an additively separable objective function, stochastic perturbations can drive the system to a state wherein the original optimal input sequence is no longer optimal. This implies that for problems which include stochastic perturbations, a new criterion must be proposed for the use of Bellman's equation. This reformulation of the Principle Of Optimality is non-trivial and requires us to re-work our definitions of DP. Specifically, we formulate the stochastic DP problem as a time-indexed sequence of DP problems, wherein the variable is not the input sequence, but is rather replaced by the policy. This policy is then used to generate a \textit{set} of possible trajectories and input sequences, indexed by the set of possible instantiations of the random variables.

{Using this new formulation of the stochastic DP problem, our definition of the Principle Of Optimality requires that optimality of the \textit{policy} is inherited by every possible instantiation of the trajectory with probability one - i.e. the policy may be sub-optimal, but only on a set of instantiations of measure zero. We then show that the state-augmentation strategy we propose for DP problems with stochastic perturbations and NFS objective functions results in a stochastic DP problem for which this revised the Principle Of Optimality holds. Finally, we show how Bellman's equation can be used in this case to recover an optimal policy.}

%


{We begin by defining the map from a chosen policy ($\pi$), initial condition ($x_0$) and instantiation of the random variables ($[\mbf v]_{t_0}^{T-1}$) to the resulting trajectory, $x$; this will allow us to state precisely which random variables the expectation in the objective function is respect to. For simplicity we only consider random variables with Gaussian distributions.}

\begin{defn} \label{defn: the trajectory map}
	For a vector field $f$ : $\mathbb{R}^n \times \mathbb{R}^m \times \mathbb{N} \times \R^q \to \mathbb{R}^n$, a set of polices $\Pi$ associated with some DP problem, a starting time $t_0 \in \N $, and terminal time $T \in \N$, let us denote \textbf{the state map} by $\psi_{f,t_0}:\Pi \times \R^n \times \N \times \R^{q \times (T-t_0)} \to \R^n$. We say that $x=\psi_{f,t_0}(\pi, x_0, T, [\mbf v]_{t_0}^{T-1})$ if $x=x(T)$ where $x(T)$ is a solution to the following recursion equations $x(t_0)=x_0$, $x(t+1)=f(x(t),\pi(x(t),t),t,v(t))$ for $t \in \{t_0,..., T-1\}$ and $[\mbf v]_{t_0}^{T-1} =[v(t_0),..,v(T-1)] \in \R^{q \times (T-t_0)} $. We denote the image of the state vector under a set of instantiations $ Y \subset \R^{q \times (T-t_0)}$ by $\psi_{f,t_0}(\pi, x_0, T, Y)= \{ \psi_{f,t_0}(\pi, x_0, T, [\mbf v]_{t_0}^{T-1}) \in \R^n: [\mbf v]_{t_0}^{T-1} \in Y  \}$.
	
	We also denote \textbf{the trajectory map} by 	$\Phi_{f,t_0}: \Pi \times \R^n \times \N \times \R^{q \times (T-t_0)} \to \R^{m \times (T-t_0)}  \times \R^{n \times (T-t_0 +1)} $. We say that $(\mbf u, \mbf x)=\Phi_{f,t_0}(\pi, x_0,T, [\mbf v]_{t_0}^{T-1})$ if $\mbf u = (\pi(x(t_0),t_0),...,\pi(x(T-1),T-1))$, and $\mbf x =(x(t_0),...,x(T))$ is such that $x(t)=\psi_{f,t_0}(\pi, x_0, t, [\mbf v]_{t_0}^{T-1})$ for $t \in \{t_0,..., T-1\}$.
\end{defn}
\normalsize We define the class of stochastic DP problems with forward separable objective as $\mcl H_s(t_0,x_0)$, \normalsize

{ \small \begin{align}
	&\mathbf \pi^{\mcl H_s*} = \arg \min_{\pi \in \Pi} \;\; \mbb E_{ [\mbf v]_{t_0}^{T-1} }\left( J_{t_0}^{\mcl H_s}(\Phi_{f,t_0}(\pi, x_0, T, [\mbf v]_{t_0}^{T-1}))  \right) \label{eqn:Forward sep DP stochastic}\\ \nonumber
	&\text{subject to:  }  \text{ } \psi_{f,t_0}(\pi, x_0, t, [\mbf v]_{t_0}^{t-1}) \in X_t \text{ for  } t=t_0,..,T  \\ \nonumber
	&  \pi(x,t) \in U_t \text{ and }   v(t) \in \R^q \sim \mathcal{N}(\mbf 0,I_{q \times q}) \; \forall x \in X_t, \forall t=t_0,..,T-1,
	\end{align} }
where $J_{t_0}^{\mcl H_s}: \R^{m \times(T-t_0) } \times \R^{n \times (T-t_0+1)} \to \R$ is a forward separable function with associated representation $\{ \phi_i\}_{i=t_0}^T$; $f$ : $\mathbb{R}^n \times \mathbb{R}^m \times \mathbb{N} \times \R^q \to \mathbb{R}^n$; $\psi_{f, t_0}$ {is measurable} and $\Phi_{f, t_0}$ are the state and trajectory map respectively defined in Definition \ref{defn: the trajectory map}; $U_i$ is assumed to be some compact subset of $\R^{m \times (i - t_0)}$; $X_i \subset \R^{n \times (i -t_0 +1) }$; $[\mbf v]_{t_0}^{T-1} = [v(t_0),..,v(t-1)] \in \R^{q \times (T-t_0 )}$; $\mathbb E_{\mbf v}$ is the expectation with respect to the random variable $\mbf v$. Define $J_{t_0}^{\mcl H_s*}= \mbb E_{[\mbf v]_{t_0}^{T-1}} \left(J^{\mcl H_s}_{t_0}( \Phi_{f,t_0}(\pi^{H_s*}, x_0, T, [\mbf v]_{t_0}^{T-1}) )\right)$ as the expected cost of using an optimal policy when applied to $\mcl H_s(t_0,x_0)$.

\textbf{Change in Variables for Stochastic Problems:} Unlike in the deterministic case the solution to stochastic DP problems, such as \eqref{eqn:Forward sep DP stochastic}, is now a policy $\pi \in \Pi$ and not a definite input and state sequence $\mathbf{u}^*\in \R^{m \times (T-t_0)}$ and $\mathbf{x}^*\in \R^{n \times (T-t_0+1)}$, such as in \eqref{eqn:opt}. This is because the optimal sequence of inputs, $\mathbf u^*$, that results in an optimal trajectory, $\mathbf x^*$, will depend on the instantiation of the random variables. This change of notation demonstrates that the solution to DP problems involving stochastic dynamics no longer belongs to some finite dimensional space, $(\mathbf{u}^*,\mathbf{x}^*) \in \R^{m \times (T-t_0)}\times \R^{n \times (T-t_0+1)}$, but rather an infinite dimensional functional space $\pi^* \in \Pi$.

\subsection{Stochastic Additively Separable DP Problems}
{In the special case when the objective function of \eqref{eqn:Forward sep DP stochastic} is an additively separable function, as per Definition \ref{defn: additively seperbale function}, given as
	\begin{align} \label{eqn:DP stochastic}
	J_{t_0}^{\mcl Q}(\mbf u, \mbf x)= \sum_{t=t_0}^{T-1} c_t(x(t),u(t)) + c_T(x(T)),
	\end{align}
	 we denote the DP problem \eqref{eqn:Forward sep DP stochastic} by $\mcl Q(t_0,x_0)$. }

{stochastic DP problems of form $\mcl Q(t_0,x_0)$ are often referred to as Markov Decision Processes (MDP), and are sometimes denoted by the tuple $\{\{X_t\}_{t_0}^{T},\{U_t\}_{t_0}^{T-1},\psi,\{Q_t\}_{t_0}^{T-1},\{c\}_{t_0}^T\}$; where $\psi(x,v,t)=\{u \in U_t: f(x,u,t,v) \in X_{t+1} \}$, $Q_t(B|x,u)= \int_{B} \mathds{1}_{B}(f(x,u,t,v)) \phi(v) dv $, and $\phi(v)$ is the probability density function of the random variable $v$.
}

\subsection{The Principle Of Optimality For Stochastic Problems}
As discussed in \cite{Sniedovich_2002_eureka} the extension of the Principle of Optimality to the stochastic case is non-trivial. We first give an example from \cite{Porteus_1975} of a stochastic DP problem which shows that an optimal policy may not be optimal for every instantiation of the random variables at future time steps.


Let us consider the following stochastic DP problem $\mcl W(0,x_0)$,
\begin{align} \label{opt: W}
&\pi^*= \arg \min_{\pi \in \Pi} \mbb E_{v(0)} \left( J_{0}( \Phi_{f,0}(\pi, x_0,1, [ v(0)]))    \right)\\ \nonumber
& \text{subject to: } v(0) \sim U[0,1] , \quad x(0)=x_0.
\end{align}
Here $J_{t}(\mbf u, \mbf x)= -\sum_{n=t}^{1} u(n)$, $f(x,u,t,v) = v$, and $\pi \in \Pi \iff \pi(x,t)\in \{0,1\} \forall x \in \R, t=0,1$.


\begin{cex} \label{cex: principle of optimality is not valid for stochastic}
	The policy $\pi(x,t)= \begin{cases} 1 \text{ if } x\in[0,1)\\ 0 \text{ if } x=1 \end{cases}$ is optimal for the problem $\mcl W(0,0)$ \eqref{opt: W} but not optimal for the problem $\mcl W(1,1)$.
\end{cex}
\begin{proof}
	Clearly $J_{0}(\mbf u, \mbf x) \ge -2$ for all $(\mbf u, \mbf x) \in \{0,1\}^{2} \times \R^3$ and $J_{0}(\mbf u, \mbf x)= -2$ is attainable using the input $(u(0),u(1))=(1,1)$; therefore any solution of $\mcl W(0,0)$ will minimize the objective function to a value of -2. Now using the law of total expectation we get,
	\begin{align*}
	& \mbb E_{v(0)} \left( J_{0}( \Psi_{f,0}(\pi, 0,1, [ v(0),v(1)]))   \right) \\
	& = -\mbb E_{v(0)} \left( \pi(0,0) + \pi(v(0),1) \right)\\
	& = -\pi(0,0) - \mbb E_{v(0)} \left( \pi(v(0),1) | v(0) \in [0,1) \right) \mbb P_{v(0)}(v(0) \in [0,1) )\\
	& \qquad - \mbb E_{v(0)} \left( \pi(v(0),1) | v(0) =0 \right) \mbb P_{v(0)}(v(0) =0 ) \\
	& = -2,
	\end{align*}
	since the probability of a continuous random variable (such as a uniformly distributed random variable) taking a particular value is 0. Thus it follows the policy $\pi$ is optimal for $\mcl W(0,0)$. Trivially $\pi$ is not optimal for $\mcl W(1,1)$ as the value of the objective functions becomes 0 under $\pi$ whereas the input $u(1)=1$ produces a smaller objective function value of -1.
\end{proof}

Clearly, for the stochastic DP problems of form $\mcl H_s(t_0,x_0)$ \eqref{eqn:Forward sep DP stochastic}, such as $\mcl W(0,0)$ \eqref{opt: W}, any optimal policy $\pi^*$ does not always result in the same trajectory $\mathbf x=(x(t_0),..., x(T))$; as this is dependent on the instantiations of the underlying random variables, $[\mbf v]_{t_0}^{T-1}$. As Counterexample \ref{cex: principle of optimality is not valid for stochastic} has shown there {exist} stochastic DP problems, with additively separable objective functions, that have optimal policies that are no longer optimal for future timesteps if certain instantiations of the underlying random variables are realized. Therefore, it is too restrictive to extend Definition \ref{defn: principle of optimality deterministic}, the Principle of Optimality for the deterministic case, to the stochastic case by requiring stochastic DP problems satisfying the Principle of Optimality to be such that their optimal policies are also optimal for each instantiation at any future time step. With this in mind and motivated by the work of \cite{Hinderer_1970} we now give a probabilistic definition of the Principle of Optimality for stochastic DP problems.
	

\begin{defn} \label{defn: stochastic principle of optimality}
	For a stochastic DP problem $\mcl H_s(t_0,x_0)$ \eqref{eqn:Forward sep DP stochastic} with an optimal policy $\pi^* \in \Pi$ and associated state map $\psi_{f,t_0}$, defined in definition \ref{defn: the trajectory map}, let us denote the set indexed by $k \ge t_0$,
\begin{align*}
Y_k= & \{[\mbf v]_{t_0}^{k-1} \in \R^{q \times (k -t_0)}: \\
& \pi^* \text{ does not solve } \mcl H_s(k,\psi_{f,t_0}(\pi^*, x_0,k, [\mbf v]_{t_0}^{k-1})) \}
\end{align*}
	where $[\mbf v]_{t_0}^{k-1} =[v(t_0),...,v(k-1)] \in \R^{q \times (k-t_0)}$. We say stochastic DP problems of the form $\mcl H_s(t_0,x_0)$ \eqref{eqn:Forward sep DP stochastic} \textbf{satisfy the Principle of Optimality} if for any $k\ge t_0$ we have
	\[
	\mathbb P_{ [\mbf v]_{t_0}^{k-1}}( [\mbf v]_{t_0}^{k-1} \in Y_k )=0.
	\]
	Here $\mbb P_{ [\mbf v]_{t_0}^{k-1} }$ is the probability measure associated with the random variable $[\mbf v]_{t_0}^{k-1} \in \R^{q \times (k-t_0)}$, $v(t) \sim \mcl N(\mbf 0, I_{q \times q} )$ for $t \in \{t_0,...,k-1\}$.
	
\end{defn}

{We next show that stochastic DP problems with additively separable objective functions (MDP's) satisfy the Principle of Optimality as formulated in Definition \ref{defn: stochastic principle of optimality}. }

\begin{lem} \label{lem: addative stochastic functions satisfy the principle of optimality}
	A stochastic DP problem of Form $\mcl Q(t_0,x_0)$ \eqref{eqn:DP stochastic} satisfies the Principle of Optimality as formulated in Definition \ref{defn: stochastic principle of optimality}.
\end{lem}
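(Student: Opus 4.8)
The plan is to argue by contradiction in the spirit of Lemma~\ref{lem: P(t,x) satisfies the principle of optimality}, but replacing the deterministic splice with a measure-theoretic one: I exploit additive separability to cut the objective at the intermediate time $k$, and then integrate a nonnegative ``regret'' against the law of the state actually reached at time $k$. Fix an optimal policy $\pi^*$ for $\mcl Q(t_0,x_0)$ and fix $k\ge t_0$. First I would introduce the optimal expected cost-to-go of the subproblem started at time $k$ and state $x$,
\[
V_k(x)=\min_{\pi\in\Pi}\;\mbb E_{[\mbf v]_{k}^{T-1}}\Big(\sum_{t=k}^{T-1}c_t(x(t),\pi(x(t),t))+c_T(x(T))\Big),
\]
evaluated along the trajectory with $x(k)=x$, and invoke the existence of a measurable optimal Markov continuation policy attaining $V_k$ (the stochastic counterpart of the optimal-policy corollary following Proposition~\ref{prop:bellman}). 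Writing $R_k(x)\ge 0$ for the expected cost-to-go of $\pi^*$ from $x$ minus $V_k(x)$, the set $Y_k$ of Definition~\ref{defn: stochastic principle of optimality} is exactly $\{[\mbf v]_{t_0}^{k-1}:R_k(\psi_{f,t_0}(\pi^*,x_0,k,[\mbf v]_{t_0}^{k-1}))>0\}$.

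Next I would use additive separability to write
\[
J_{t_0}^{\mcl Q}(\mbf u,\mbf x)=\sum_{t=t_0}^{k-1}c_t(x(t),u(t))+\Big(\sum_{t=k}^{T-1}c_t(x(t),u(t))+c_T(x(T))\Big),
\]
so that the first sum depends only on the trajectory through time $k$. Because the $v(t)$ are independent and $\psi_{f,t_0}$ is measurable, the law of total expectation (conditioning on the history $[\mbf v]_{t_0}^{k-1}$, equivalently on the reached state $x(k)$) expresses the expected objective under $\pi^*$ as the expected pre-$k$ cost plus $\mbb E_{[\mbf v]_{t_0}^{k-1}}[\,R_k(x(k))+V_k(x(k))\,]$, where $x(k)=\psi_{f,t_0}(\pi^*,x_0,k,[\mbf v]_{t_0}^{k-1})$.

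Then I would splice: let $\tilde\pi$ equal $\pi^*$ for times $t<k$ and equal the optimal continuation policy for $t\ge k$. Since both policies agree before time $k$ and the dynamics, initial state, and instantiations are identical, the trajectory through time $k$, and hence the expected pre-$k$ cost, is unchanged, while the continuation of $\tilde\pi$ attains $V_k(x(k))$ conditionally. Therefore
\[
J_{t_0}^{\mcl Q*}-\mbb E_{[\mbf v]_{t_0}^{T-1}}\big(J_{t_0}^{\mcl Q}(\Phi_{f,t_0}(\tilde\pi,x_0,T,[\mbf v]_{t_0}^{T-1}))\big)=\mbb E_{[\mbf v]_{t_0}^{k-1}}\big[R_k(x(k))\big]\ge 0.
\]
Optimality of $\pi^*$ forces the left-hand side to be $\le 0$, hence $\mbb E_{[\mbf v]_{t_0}^{k-1}}[R_k(x(k))]=0$; since $R_k\ge 0$ this yields $R_k(x(k))=0$ for almost every $[\mbf v]_{t_0}^{k-1}$, i.e. $\mathbb P_{[\mbf v]_{t_0}^{k-1}}(Y_k)=0$, which is the claim.

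The main obstacle I anticipate is measure-theoretic rather than algebraic: I must ensure $\tilde\pi$ is a bona fide measurable Markov policy (so the optimal continuation attaining $V_k$ exists as a measurable selection), justify the conditioning/Fubini step using the assumed measurability of $\psi_{f,t_0}$ and the independence of the $v(t)$, and confirm that the pushforward of the instantiation measure correctly identifies the state-defined regret set $\{x:R_k(x)>0\}$ with $Y_k$. The counterexample $\mcl W(0,0)$ of Counterexample~\ref{cex: principle of optimality is not valid for stochastic} shows precisely why only an almost-sure, rather than pointwise, conclusion is available: the ``bad'' states form a null set under the induced measure, which is exactly what $\mathbb P(Y_k)=0$ records.
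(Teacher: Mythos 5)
Your proof is correct in substance and shares the paper's core mechanism---splitting the additively separable cost at time $k$, splicing $\pi^*$ with a better continuation, and applying the law of total expectation---but the execution differs in two genuine ways. First, the paper argues by contradiction: assuming $\mathbb{P}(Y_k)>0$, it picks a single improving policy $\theta$ for the instantiations in $Y_k$ and defines a spliced policy $\hat\pi$ that switches to $\theta$ only when the time-$k$ state lies in $\psi_{f,t_0}(\pi^*,x_0,k,Y_k)$; this splice is history-dependent, so the paper must invoke Lemma~\ref{lem: Optimal policy is Markov} (Bertsekas/Hinderer) to replace $\hat\pi$ with an equivalent Markov policy $\alpha$ before deriving the contradiction. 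Your splice instead uses a continuation that is optimal from \emph{every} state, so it never needs to know whether the reached state is ``bad''; the resulting $\tilde\pi$ is genuinely Markov and the appendix lemma is not needed at all. Second, your argument is direct rather than by contradiction: nonnegativity of the regret $R_k$ together with zero expected regret gives $R_k=0$ almost surely, which spares you the paper's bookkeeping of strict inequalities through conditional expectations on $Y_k$ and its complement. The price is the measurable-selection assumption you flag---you need a measurable Markov continuation attaining $V_k$, whereas the paper's Corollary~\ref{cor: optimal policy of stochatsic} only \emph{conditions} on such a policy existing. Note, however, that the paper's step of choosing one $\theta$ that improves uniformly (in conditional expectation) over all of $Y_k$ hides essentially the same selection issue, since a priori the improving policy could vary with the instantiation; and if attainment fails, your argument patches cleanly by using measurable $\eps$-optimal continuations, concluding $\mbb E[R_k(x(k))]\le\eps$ for every $\eps>0$. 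So the gap you anticipate is real but no worse than what the paper itself leaves implicit.
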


\begin{proof}
	
	Suppose $\pi^*$ solves $\mcl Q(t_0,x_0)$. For $k>t_0$ and the state map $\psi_{f,t_0}$ associated with $\mcl Q(t_0,x_0)$ let us recall the set defined in Definition \ref{defn: stochastic principle of optimality},
	\small{ \begin{align*}
	Y_{k}: =  \{[\mbf v]_{t_0}^{k-1} & \in \R^{q \times (k -t_0)}: \pi^* \text{ does not solve } \mcl Q(k,x_{\pi^*}( [\mbf v]_{t_0}^{k-1})) \}.
	\end{align*} } \normalsize
	Here $[\mbf v]_{t_0}^{k-1}: =[v(t_0),...,v(k-1)] \in \R^{q \times (k-t_0)}$, and we use the short-hand $x_{\pi^*}([\mbf v]_{t_0}^{k-1}):=\psi_{f,t_0}(\pi^*, x_0,k, [\mbf v]_{t_0}^{k-1} )$.
	
	Now for contradiction suppose there exists $k \in \{t_0,...,T\}$ such that $\mathbb P_{ [\mbf v]_{t_0}^{k-1}}( [\mbf v]_{t_0}^{k-1} \in Y_{k} )>0$; where $v(t) \sim \mcl N(\mbf 0, I_{q\times q})$ for $t \in \{t_0,...,k-1\}$. For $[\mbf v]_{t_0}^{k-1} \in Y_{k}$  we know the policy $\pi^*$ is not optimal for $\mcl Q(k,x_{\pi^*}([\mbf v]_{t_0}^{k-1}) )$ and thus there exists a feasible policy $\theta \in \Pi$ such that,
		{ \begin{align} \nonumber
	 & \mbb E_{  [\mbf v ]_{t_0}^{T-1}} \bigg(  J_{k }^{\mcl Q}(\Phi_{f,k}  (\theta, x_{\pi^*}([\mbf v]_{t_0}^{k-1}),T, [\mbf v]_{k}^{T-1} )) \bigg|  [\mbf v]_{t_0}^{k-1}  \in Y_{k}   \bigg) \\ \label{ineq: pi on is not optimal}
	& <\\ \nonumber
	&  \mbb E_{[\mbf v ]_{t_0}^{T-1}} \bigg( J_{k}^{\mcl Q}(\Phi_{f,k}  (\pi^*, x_{\pi^*}([\mbf v]_{t_0}^{k-1}) ,T, [\mbf v]_{k}^{T-1} )) \bigg| [\mbf v]_{t_0}^{k-1} \in Y_{k}  \bigg).
	\end{align} }
\normalsize
	Now let us consider the map,
	\small{ \begin{align} \label{eta}
	\hat{\pi}_t([x(t_0),...,x(t)])= \begin{cases}
		\theta(x(t),t) \text{ if } t\ge k, x(k) \in \psi_{f,t_0}(\pi^*,x_0,k,Y_{k}) \\
		\pi^*(x(t),t) \text{ otherwise }.
		\end{cases}
		\end{align}} \normalsize
Using Lemma \ref{lem: Optimal policy is Markov}, there exists a policy $\alpha \in \Pi$ such that \eqref{identity: markov produces same objective function value} holds for $\{\hat{\pi}_t\}$ defined in \eqref{eta}. We will now show the policy $\alpha$ contradicts that $\pi^*$ be an optimal policy for $\mcl Q(t_0,x_0)$. We first note using \eqref{identity: markov produces same objective function value} and the law of total probabilities,
	{ 	\begin{align} \label{eqn: law of total prob}
	 & \mbb E_{[\mbf v]_{t_0}^{T-1} } \left( J_{t_0}^{\mcl Q}( \Phi_{f,t_0}(\alpha, x_0,T, [\mbf v]_{t_0}^{T-1} ))   \right)\\ \nonumber
	 &  = \mbb E_{[\mbf v]_{t_0}^{T-1}}\left( J_{t_0}^{\mcl Q}(\Phi_{f,t_0}(\hat{\pi}, x_0,T, [\mbf v]_{t_0}^{T-1} )) \right)   \\ \nonumber
	 & = \mbb E_{[\mbf v]_{t_0}^{T-1} } \left( J_{t_0}^{\mcl Q}( \Phi_{f,t_0}(\hat{\pi}, x_0,T, [\mbf v]_{t_0}^{T-1} ) | [\mbf v]_{t_0}^{k-1} \in Y_{k}   \right)\\ \nonumber
	 & \qquad \qquad  \mathbb P_{ [\mbf v]_{t_0}^{k-1}}( [\mbf v]_{t_0}^{k-1} \in Y_{k} ) \\ \nonumber
	 & \qquad + \mbb E_{[\mbf v]_{t_0}^{T-1} } \left( J_{t_0}^{\mcl Q}( \Phi_{f,t_0}(\hat{\pi}, x_0,T, [\mbf v]_{t_0}^{T-1} ) ) | [\mbf v]_{t_0}^{k-1} \notin Y_{k}   \right) \\ \nonumber
	 & \qquad \qquad \mathbb P_{ [\mbf v]_{t_0}^{k-1}}( [\mbf v]_{t_0}^{k-1} \notin Y_{k} ).
		\end{align} } 	\normalsize
	We recall the additive structure of $J_{t_0}^{\mcl Q}$
	\begin{align*}
	J_{t_0}^{\mcl Q}(\mbf u, \mbf x)= \sum_{t=t_0}^{T-1} c_t(x(t),u(t)) + c_T(x(T)),
	\end{align*}
	where $\mbf u=(u(t_0),...,u(T-1)) \text{ and } \mbf x =(x(t_0),...,x(T))$ and $c_T(x):\mathbb{R}^n \to \mbb R$, $c_t(x,u):\mbb R^n \times \R^m \to \R $ for $t=t_0,\cdots T-1$.

Now using the fact $\hat{\pi}_t([x(t_0),...,x(t)])= \pi^*(x(t),t)$ for all $t<k$, $\hat{\pi}_t([x(t_0),...,x(t)])= \theta(x(t),t)$ if $t\ge k$ and $x(k) \in \psi_{f,t_0}(\pi^*,x_0,k,Y_{k})$, linearity of the expectation and the inequality \eqref{ineq: pi on is not optimal} we have,
		\small{
		\begin{align} \label{above inequality}
		& \mbb E_{[\mbf v]_{t_0}^{T-1} } \left( J_{t_0}^{\mcl Q}( \Phi_{f,k}  (\pi^*, x_0 ,T, [\mbf v]_{t_0}^{T-1} ) ) \bigg| [\mbf v]_{t_0}^{k-1} \in Y_{k}   \right) \\ \nonumber
		& = \mbb E_{[\mbf v]_{t_0}^{T-1}}\left( \sum_{t=t_0}^{k-1}{c_{t}(x_{\pi^*}([\mbf v]_{t_0}^{t-1}),\pi^*(x_{\pi^*}([\mbf v]_{t_0}^{t-1}),t))} \bigg| [\mbf v]_{t_0}^{k-1} \in Y_{k}    \right) \\
		\nonumber
		&  + \mbb E_{[\mbf v ]_{t_0}^{T-1}} \bigg(  J_{k}^{\mcl Q}(\Phi_{f,k}  (\theta, x_{\pi^*}([\mbf v]_{t_0}^{k-1}),T, [\mbf v]_{k}^{T-1} )) \bigg|  [\mbf v]_{t_0}^{k-1}  \in Y_{k}   \bigg) \\ \nonumber
		& < \mbb E_{[\mbf v]_{t_0}^{T-1}}\left( \sum_{t=t_0}^{k-1}{c_{t}(x_{\pi^*}([\mbf v]_{t_0}^{t-1}),\pi^*(x_{\pi^*}([\mbf v]_{t_0}^{t-1}),t))} \bigg| [\mbf v]_{t_0}^{k-1} \in Y_{k}    \right) \\
\nonumber
&  + \mbb E_{[\mbf v ]_{t_0}^{T-1}} \bigg(  J_{k }^{\mcl Q}(\Phi_{f,k}  (\pi^*, x_{\pi^*}([\mbf v]_{t_0}^{k-1}),T, [\mbf v]_{k}^{T-1} )) \bigg|  [\mbf v]_{t_0}^{k-1}  \in Y_{k}   \bigg) \\ \nonumber
		&= 	\mbb E_{[\mbf v]_{t_0}^{T-1}}\left( J_{t_0}^{\mcl Q}(\Phi_{f,t_0}(\pi^*, x_0,T, [\mathbf v]_{t_0}^T))  \bigg| [\mbf v]_{t_0}^{k-1} \in Y_{k}   \right).
		\end{align} } \normalsize
		Therefore using \eqref{eqn: law of total prob}; the fact $\hat{\pi}_t([x(t_0),...,x(t)])= \pi^*(x(t),t)$ if $x(k) \notin \psi_{f,t_0}(\pi^*,x_0,k,Y_{k, \pi^*})$; the total law of probability; the above inequality \eqref{above inequality}; and the assumption $\mathbb P_{ [\mbf v]_{t_0}^{k-1}}( [\mbf v]_{t_0}^{k-1} \in Y_{k} )>0$ (so the inequality remains strict) we derive,
				\begin{align}
		& \mbb E_{[\mbf v]_{t_0}^{T-1} } \left( J_{t_0}^{\mcl Q}( \Phi_{f,t_0}(\alpha, x_0,T, [\mbf v]_{t_0}^{T-1} ))   \right)   \\ \nonumber
		& \qquad < \mbb E_{[\mbf v]_{t_0}^{T-1} }\left( J_{t_0}^{\mcl Q}(\Phi_{f,t_0}(\pi^*, x_0,T, [\mathbf v]_{t_0}^{T-1}))    \right).
		\end{align}
		This contradicts the fact $\pi^*$ is an optimal policy for $\mcl Q(t,x)$. Therefore we conclude $P_{ [\mbf v]_{t_0}^{k-1}}( [\mbf v]_{t_0}^{k-1} \in Y_{k} )=0$ showing DP problems of the form $\mcl Q(t_0,x_0)$ satisfy Definition \ref{defn: stochastic principle of optimality} and hence satisfy the Principle of Optimality. \end{proof}

We will now state Bellman's equation for stochastic DP problems of the form $\mcl Q(t,x)$.


\begin{prop}[\cite{Lerma_1996}]
	 	For stochastic DP problems of the form $\mcl Q(t,x)$ in \eqref{eqn:DP stochastic} with optimal objective values $J_{t}^{\mcl Q*}$, define the function $F(x,t)= J_{t}^{\mcl Q*}$. Then the following hold for all $x \in X_t$,	 	
	\begin{align} \label{Bellman_stochastic}
	& F(x,t)=  \inf_{u}\{ c_t(x,u )  + \mathbb{E}_{v}[F(f(x,u,t,v),t+1) ]\}. \\ \nonumber
	& F(x,T)=c_T(x). \nonumber
	\end{align}  \normalsize
\end{prop}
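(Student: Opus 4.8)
The plan is to verify the two equations in~\eqref{Bellman_stochastic} separately, relying on the additive structure of $J_{t_0}^{\mcl Q}$ in~\eqref{eqn:DP stochastic}, the Principle of Optimality established in Lemma~\ref{lem: addative stochastic functions satisfy the principle of optimality}, and the tower property of conditional expectation together with the independence of the instantiations $v(t_0),\dots,v(T-1)$. The terminal condition is immediate: at $t=T$ there are no remaining inputs, so the trajectory generated from any policy is simply $\mbf x = (x)$ and the objective reduces to $J_T^{\mcl Q}(\mbf u, \mbf x)=c_T(x(T))=c_T(x)$; hence $F(x,T)=J_T^{\mcl Q*}=c_T(x)$.

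For the recursion, fix $t<T$ and $x \in X_t$. Because every $\pi \in \Pi$ is Markov (Definition~\ref{def: policy}) and $J_{t}^{\mcl Q}$ is additively separable, the objective generated from $(x,t)$ splits as
\begin{align*}
J_t^{\mcl Q}(\Phi_{f,t}(\pi, x, T, [\mbf v]_t^{T-1})) = c_t(x, \pi(x,t)) + J_{t+1}^{\mcl Q}(\Phi_{f,t+1}(\pi, x', T, [\mbf v]_{t+1}^{T-1})),
\end{align*}
where $x' = f(x, \pi(x,t), t, v(t))$ is the random successor state. Since $c_t(x,\pi(x,t))$ is deterministic, conditioning first on $v(t)$ and using the tower property and independence gives
\begin{align*}
\mbb E_{[\mbf v]_t^{T-1}}\!\left[ J_t^{\mcl Q}(\Phi_{f,t}(\pi, x, T, [\mbf v]_t^{T-1})) \right] = c_t(x,\pi(x,t)) + \mbb E_{v(t)}\!\left[ G(\pi, x') \right],
\end{align*}
where $G(\pi, x') := \mbb E_{[\mbf v]_{t+1}^{T-1}}[ J_{t+1}^{\mcl Q}(\Phi_{f,t+1}(\pi, x', T, [\mbf v]_{t+1}^{T-1})) ]$ is precisely the $\mcl Q(t+1,x')$ objective evaluated under the tail of $\pi$, so that $G(\pi,x') \ge F(x',t+1)$ always.

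The two inequalities follow. For ``$\le$'', given any feasible $u$ and any $\eps>0$, one selects an $\eps$-optimal tail policy for each reachable successor state and pastes it onto the choice $u$ at time $t$ to form an admissible policy for $\mcl Q(t,x)$; its cost is within $\eps$ of $c_t(x,u)+\mbb E_{v}[F(f(x,u,t,v),t+1)]$, whence $F(x,t) \le \inf_u\{c_t(x,u)+\mbb E_v[F(f(x,u,t,v),t+1)]\}$. For ``$\ge$'', let $\pi^*$ be optimal for $\mcl Q(t,x)$. By Lemma~\ref{lem: addative stochastic functions satisfy the principle of optimality}, $\pi^*$ solves $\mcl Q(t+1, \psi_{f,t}(\pi^*,x,t+1,v(t)))$ for all $v(t)$ outside a set of measure zero, so $G(\pi^*, x') = F(x',t+1)$ almost surely; substituting into the split identity yields $F(x,t)=c_t(x,\pi^*(x,t))+\mbb E_v[F(f(x,\pi^*(x,t),t,v),t+1)]\ge \inf_u\{\cdots\}$.

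The main obstacle is measure-theoretic rather than algebraic: pasting a separately chosen $\eps$-optimal tail policy for each successor state into a single admissible Markov map in $\Pi$ requires a measurable selection, and justifying the interchange of $\inf_u$ with $\mbb E_{v(t)}$ relies on the measurability of $\psi_{f,t_0}$ and compactness of $U_t$ assumed in~\eqref{eqn:Forward sep DP stochastic}. These are exactly the technical hurdles handled in the standard MDP development of~\cite{Lerma_1996}; the novel content here is the reduction to the ``$G(\pi^*,x')=F(x',t+1)$ almost surely'' step, which is supplied directly by our probabilistic Principle of Optimality (Definition~\ref{defn: stochastic principle of optimality}) and Lemma~\ref{lem: addative stochastic functions satisfy the principle of optimality}.
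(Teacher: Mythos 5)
The paper offers no proof of this proposition --- it is imported verbatim from \cite{Lerma_1996} --- so there is no in-paper argument to compare yours against; what follows assesses your sketch on its own terms. Most of it is the standard dynamic-programming argument and is sound: the terminal condition, the additive split of the cost under a Markov policy, the tower/Fubini step (using independence of $v(t)$ and $[\mbf v]_{t+1}^{T-1}$), and the ``$\ge$'' direction, where invoking Lemma~\ref{lem: addative stochastic functions satisfy the principle of optimality} to conclude $G(\pi^*,x')=F(x',t+1)$ almost surely is a genuinely nice use of the paper's own machinery --- granted the standing assumption, implicit in the $\arg\min$ notation of \eqref{eqn:Forward sep DP stochastic}, that optimal policies exist for $\mcl Q(s,y)$ at every $(s,y)$, not only at $(t_0,x_0)$.

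The genuine gap is in the ``$\le$'' direction, and it is not merely the measurable-selection issue you flag. Pasting a separately chosen $\eps$-optimal tail policy $\theta_{x'}$ for each successor state $x'$ cannot in general produce an element of $\Pi$: a Markov policy must assign a \emph{single} action to each pair $(y,s)$, but for $s>t+1$ the same state $y$ may be reachable from two successors $x_1'\neq x_2'$ with $\theta_{x_1'}(y,s)\neq\theta_{x_2'}(y,s)$, so the pasted object must remember which successor was visited at time $t+1$ --- it is inherently history-dependent. The repair is available inside the paper: define the pasted object as a history-dependent family $\hat{\pi}_s$ and invoke Lemma~\ref{lem: Optimal policy is Markov} to obtain $\alpha\in\Pi$ with identical expected cost; this is exactly the device the paper itself uses in the proof of Lemma~\ref{lem: addative stochastic functions satisfy the principle of optimality}, so your argument should cite it at this step. (The alternative is the organization of \cite{Lerma_1996}: define a function by the backward recursion and prove it equals the optimal value, using measurable selection to build one Markov policy that is $\eps$-optimal uniformly in the initial state; but that is a different proof structure in which $F$ is compared to the recursively defined function rather than shown directly to satisfy \eqref{Bellman_stochastic}.) The remaining measurability points --- choosing $x'\mapsto\theta_{x'}$ measurably, and measurability of $x'\mapsto F(x',t+1)$ so that $\mathbb{E}_{v}[F(f(x,u,t,v),t+1)]$ is well defined --- are real but are legitimately deferred to \cite{Lerma_1996}, as you do.
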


We see in the next corollary that if we are able to solve the stochastic Bellman equation \eqref{Bellman_stochastic} then we are able to construct an optimal policy that solves \eqref{eqn:DP stochastic}.

\begin{cor}[\cite{Lerma_1996}] \label{cor: optimal policy of stochatsic}
	Consider a stochastic DP problem of the form $\mcl Q(t_0,x_0)$ in \eqref{eqn:DP stochastic}. Suppose $F(x,t)$ satisfies Equation~\eqref{Bellman_stochastic} and suppose there exists a policy such that,
	\[
	\theta(x,t) \in \arg \min_{u \in \Gamma_{t,x}}\{c_t(x,u)+ \mathbb{E}_{ v}[F(f(x,u,t,v),t+1)]\}.
	\]
	Then the policy $\theta$ {solves }$\mcl Q(t_0,x_0)$.
\end{cor}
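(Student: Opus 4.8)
The plan is to show that the cost-to-go generated by the candidate policy $\theta$ coincides everywhere with the function $F$, which by the preceding Proposition equals the optimal cost $J_{t}^{\mcl Q*}$ of the subproblem $\mcl Q(t,x)$. To this end, for each time $t$ and state $x \in X_t$ I would define the cost-to-go of $\theta$ by
\[
V^{\theta}(x,t):=\mbb E_{[\mbf v]_{t}^{T-1}}\left( J_{t}^{\mcl Q}\left(\Phi_{f,t}(\theta,x,T,[\mbf v]_{t}^{T-1})\right)\right),
\]
and prove by backward induction on $t$ that $V^{\theta}(x,t)=F(x,t)$ for every $x\in X_t$. Since $F(x_0,t_0)=J_{t_0}^{\mcl Q*}$ is the optimal value of $\mcl Q(t_0,x_0)$, establishing $V^{\theta}(x_0,t_0)=F(x_0,t_0)$ shows that $\theta$ attains the optimum and hence solves $\mcl Q(t_0,x_0)$.

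For the base case $t=T$ the trajectory map returns $x(T)=x$ with an empty input sequence, and $J_{T}^{\mcl Q}(x)=c_T(x)$, so $V^{\theta}(x,T)=c_T(x)=F(x,T)$ by the terminal condition in \eqref{Bellman_stochastic}. For the inductive step I would assume $V^{\theta}(\cdot,t+1)=F(\cdot,t+1)$ and exploit the additive structure \eqref{eqn:DP stochastic} of $J_{t}^{\mcl Q}$ together with the Markov (memoryless) form of $\theta$. Writing $x(t+1)=f(x,\theta(x,t),t,v(t))$ and using that the noise variables $\{v(s)\}$ are independent, the tower property of conditional expectation decomposes the accumulated cost as
\begin{align*}
V^{\theta}(x,t) &= c_t(x,\theta(x,t)) \\
&\quad + \mbb E_{v(t)}\left[\mbb E_{[\mbf v]_{t+1}^{T-1}}\left(J_{t+1}^{\mcl Q}\left(\Phi_{f,t+1}(\theta,x(t+1),T,[\mbf v]_{t+1}^{T-1})\right)\right)\right].
\end{align*}
The inner expectation is exactly $V^{\theta}(f(x,\theta(x,t),t,v(t)),t+1)$, which by the induction hypothesis equals $F(f(x,\theta(x,t),t,v(t)),t+1)$.

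Substituting this and invoking the defining property of $\theta$, namely that $\theta(x,t)$ attains the minimum of $c_t(x,u)+\mbb E_{v}[F(f(x,u,t,v),t+1)]$ over $u$, I would obtain
\begin{align*}
V^{\theta}(x,t) &= c_t(x,\theta(x,t))+\mbb E_{v}\left[F(f(x,\theta(x,t),t,v),t+1)\right] \\
&= \inf_{u}\{c_t(x,u)+\mbb E_{v}[F(f(x,u,t,v),t+1)]\} = F(x,t),
\end{align*}
where the last equality is Bellman's equation \eqref{Bellman_stochastic}. This closes the induction, giving $V^{\theta}\equiv F$ and hence optimality of $\theta$.

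The main obstacle I anticipate is the rigorous justification of the expectation decomposition in the inductive step, not the algebra. This requires (i) measurability of the state map $\psi_{f,t}$ and of $\theta$ so that every conditional expectation is well defined; (ii) the independence of the Gaussian increments $v(t)\sim\mcl N(\mbf 0,I_{q\times q})$ across time, which is what licenses conditioning on $v(t)$ while treating $[\mbf v]_{t+1}^{T-1}$ as a fresh independent block; and (iii) the Markov property, since $\theta$ depends only on the current state and time, the future cost accumulated after time $t$ is a function of $x(t+1)$ alone, so restarting the trajectory map at $(t+1,x(t+1))$ is legitimate. Granting these standard measure-theoretic facts, which already underlie the statement of \eqref{Bellman_stochastic}, the remainder is the routine verification argument sketched above.
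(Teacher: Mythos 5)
Your proposal is correct: the paper itself gives no proof of this corollary (it is cited from \cite{Lerma_1996}), and your backward-induction verification argument --- showing the cost-to-go $V^{\theta}$ of the candidate policy coincides with $F$ and hence with the optimal value --- is precisely the standard proof that the citation refers to. The only point worth making explicit is that identifying the hypothesized solution $F$ of \eqref{Bellman_stochastic} with the optimal cost-to-go $J_t^{\mcl Q*}$ of the Proposition uses uniqueness of solutions to the backward recursion, which is immediate since $F(\cdot,T)=c_T$ and the recursion determines $F(\cdot,t)$ from $F(\cdot,t+1)$.
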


 \subsection{{State Augmentation For Stochastic DP problems}}
 Analogous to the deterministic case shown in Lemma \ref{lem: augmented optimization is equivalent to original}, for a stochastic DP problem of the form $\mcl H_s(t_0,x_0)$ \eqref{eqn:Forward sep DP stochastic} we can use the separable representation maps $\{ \phi_i\}_{i=t_0}^T$ of the objective function $J^{\mcl H_s}_{t_0}$ to construct an equivalent DP problem of form $\mcl Q(t_0,x_0)$ \eqref{eqn:DP stochastic} {by using state augmentation}. 
\section{{Numerically Solving Additively Separable Stochastic DP Problems}} \label{sec: numerically solving stochastic problems}
{In Section~\ref{sec:SDP}, we proposed a state-augmentation scheme for converting a stochastic forward separable DP problem, of form $\mcl H_s(t_0,x_0)$ \eqref{eqn:Forward sep DP stochastic}, to an equivalent additively separable DP problem, of form $\mcl Q(t_0,x_0)$~\eqref{eqn:DP stochastic}. In this section we propose a scheme to numerically solve stochastic additively separable DP problems of the form $\mcl Q(t_0,x_0)$ \eqref{eqn:DP stochastic}.}


{To numerically solve problems of the form $\mcl Q(t_0,x_0)$, we propose a discretization scheme similar to the one detailed in Section \ref{sec:numerical}. However, unlike in the deterministic case, the presence of random variables, $v \sim \mcl N(0,1)$, implies a non-compact state space. This requires the use of state projection onto appropriately constructed approximating compact sets. }

\subsection{Constructing An Approximated DP Problem With Compact State Space} \label{sec: approximately solving Q}
Consider the stochastic DP problem $\mcl Q(t_0,x_0)$ with compact control space $U=[\underline{u},\bar{u}]^m$ and underlying random variables $v \sim \mcl N( \mbf 0, I_{q \times q})$. As in \cite{Dufour_2012}, we assume $\forall \epsilon>0$ that there exists a compact set $H_{\eps,t}=[\underline{x}_{\eps,t},\bar{x}_{\eps,t}]^n \subset X$ (that depends on $\eps$ and $t$) such that $x_0 \in H_{\eps,0}$ and,
\begin{equation}
\sup_{x \in H_{\eps,t}, u \in U} \mathbb{P}_{ v}(f(x,u,t,v) \notin H_{\eps,t+1}) < \epsilon.
\end{equation}
We then construct the associated compact stochastic DP approximation to $\mcl Q(t_0,x_0)$ denoted by $\mcl Q_{\eps,k}(t_0,x_0)$,
\small{ \begin{align}
	&\arg \min_{\pi \in \Pi} \;\; \mbb E_{ [\mbf v]_{t_0}^{T-1} }\left( J_{t_0}^{\mcl Q}(\Phi_{\tilde{f},t_0}(\pi, x_0,T, [\mbf v]_{t_0}^{T-1}))  \right) \label{eqn:DP stochastic_approx}\\ \nonumber
	&\text{subject to:  }  \text{ } \psi_{\tilde{f},t_0}(\pi, x_0, t, [\mbf v]_{t_0}^{t-1}) \in \tilde{X}_{\eps,t, k} \text{ for  } t=t_0,..,T,  \\ \nonumber
	&  \pi(x,t) \in \tilde{U}_k \text{ and }   v(t) \in \R^q \sim \mathcal{N}(\mbf 0,I_{q \times q}) \forall x \in X_t, \forall t=t_0,..,T-1,
	\end{align} } \normalsize
where $\tilde{f}(x,u,t,v)=\arg \min_{y \in X_{\eps,t+1, k}}\{||y-f(x,u,t,v)||_2\}$, $\tilde{X}_{\eps,t, k}=\{x_{1,t},...,x_{k,t}\}^n$ such that $\underline{x}_{\eps,t}=x_{1,t}<x_{2,t}<...<x_{k,t}=\bar{x}_{\eps,t}$ and $||x_{i+1,t}-x_{i,t}||_2=\frac{\bar{x}_{\eps,t}-\underline{x}_{\eps,t}}{k}$ for $1 \le i \le k-1$, $\tilde{U}_k=\{u_1,...,u_k\}^m$ such that $\underline{u}=u_1<u_2<...<u_k=\bar{u}$ and $||u_{i+1}-u_{i}||_2=\frac{\bar{u}-\underline{u}}{k}$ for $1 \le i \le k-1$, and $[\mbf v]_{t_0}^{T-1} = [v(t_0),..,v(T-1)] \in \R^{q \times (T-t_0)}$.

Analogous to the deterministic case, an optimal policy $\pi^*_{\eps,k}$ for $\mcl Q_{\eps,k}(t_0,x_0)$ can be found exactly by iteratively solving Bellman's equation \eqref{Bellman_stochastic}. One can then construct a feasible policy for $\mcl Q(t_0,x_0)$ using,
\begin{equation} \label{eqn:constructed policy_stochastic}
\theta_{\eps,k}(x,t)= \arg \min_{u \in \Gamma_{t,x} } ||\pi^*_{\eps,k}( \arg \min_{y\in {X_{\eps,t,k}}}\{||y-x||_2\},t) - u ||_2\in \Pi
\end{equation}
where $\Gamma_{t,x}$ is the set of feasible controls at time $t \in \{0,...,T-1\}$ and state position $x \in \R^n$ for $\mcl Q(t_0,x_0)$ \eqref{eqn:DP stochastic} and $X_{\eps,t,k}$ is the state grid constraint in the problem $\mcl Q_{\eps,k}(t_0,x_0)$ \eqref{eqn:DP stochastic_approx}.

If $\mcl Q(t_0,x_0)$ satisfies assumption (A1) to (A4) {from Theorem 3.5} \cite{Dufour_2012} then \small{\begin{equation} \label{eqn:convergance conjecture_stochatsic} \lim_{\eps \to 0, k \to \infty} \left| \mbb E_{ [\mbf v]_{t_0}^{T-1} }\left( J_{t_0}^{\mcl Q}(\Phi_{\tilde{f},t_0}(\theta_{\eps,k}, x_0,T, [\mbf v]_{t_0}^{T-1}))  \right) - J_{t_0}^{\mcl Q*} \right|=0,\end{equation} } \normalsize
 where $J_{t_0}^{\mcl Q*}= \mbb E_{[\mbf v]_{t_0}^{T-1}} \left(J^{\mcl Q}_{t_0}( \Phi_{f,t_0}(\pi^{\mcl Q*}, x_0, T, [\mbf v]_{t_0}^{T-1}))\right)$ is the expected cost of using an optimal policy when applied to $\mcl Q(t_0,x_0)$.


\begin{figure*}
	\centering
	\includegraphics[width=18cm,height=8cm]{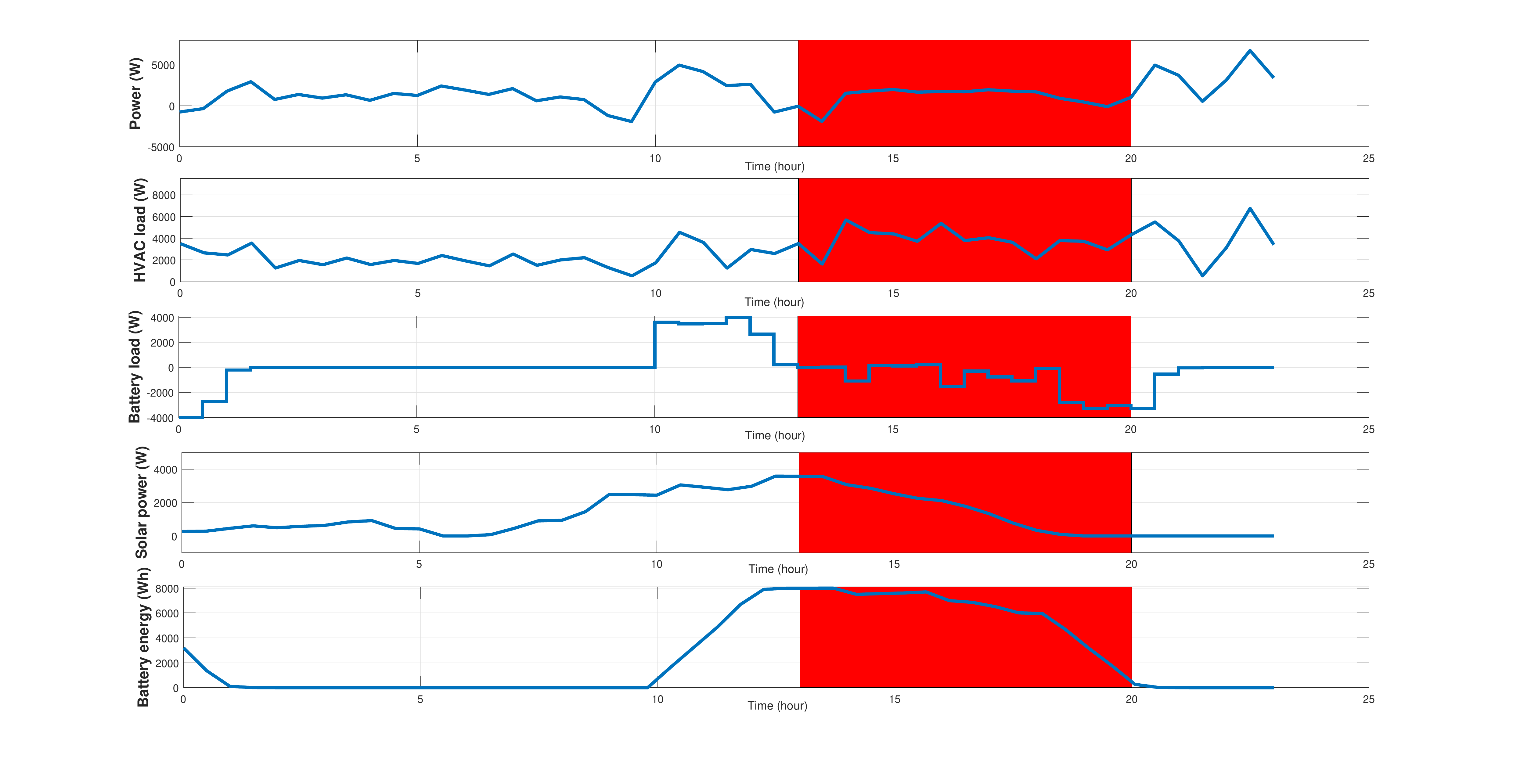}
	\vspace{-20pt}
	\caption{The trajectory the algorithm produces for randomly generated stochastic solar data. The supremun of the power is 1.05788(kw) and the cost is \$47.7211.}
	\label{fig:stochastic}
	\vspace{-20pt}
\end{figure*}

\section{{Summary: Solving NFS DP Problems Using Augmentation And Discretization}} \label{sec: augmentation and discretization methods}
\begin{figure}
	\includegraphics[scale=0.285]{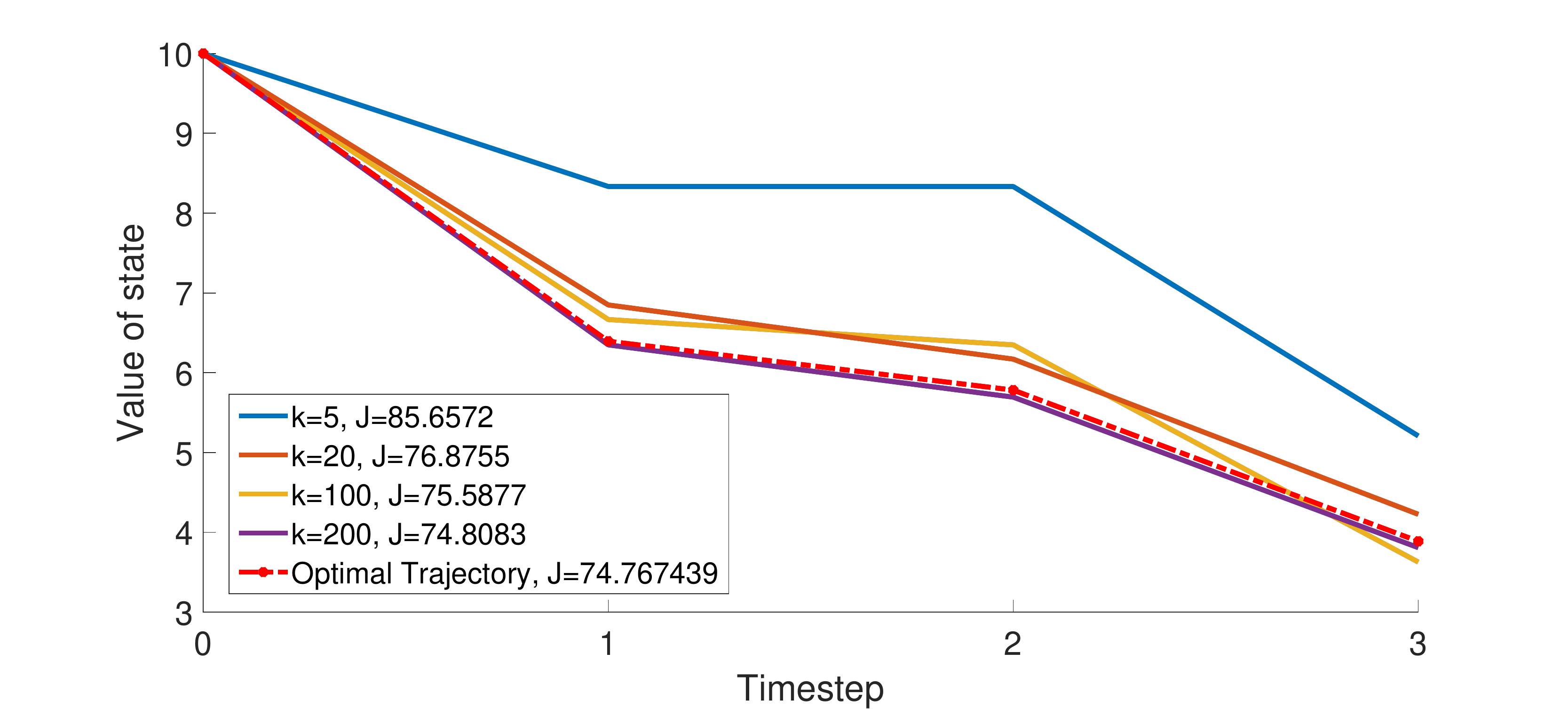}
	\vspace{-25pt}
	\caption{The resulting state trajectories from using the policy constructed from $P_k(t_0,x_0)$ in the DP Problem \eqref{eqn:Duan example}.}
	\label{fig:Duan example}
	\vspace{-20pt}
\end{figure}
{Given a DP problem with a NFS objective function, with known representation maps, we have shown in Section \ref{sec:ADP} and Section \ref{sec:SDP} how to construct equivalent DP problems with additively separable objective functions. We have furthermore proposed discretization schemes in Section \ref{sec:numerical}, for the deterministic case, and Section \ref{sec: approximately solving Q}, for the stochastic case, to solve DP problems with additively separable objective functions. We now summarize these results by proposing the following steps for solving a non-separable DP problem. Given a DP problem of the from $\mcl H(t_0,x_0)$ \eqref{opt:forward_sep}, or $\mcl H_s(t_0,x_0)$ \eqref{eqn:Forward sep DP stochastic} if stochastic, we do the following:}
\begin{enumerate}
	\item { Find a NFS representation of the objective function (Eqn.~\eqref{forward_sep_def}) with associated representation maps. One approach to this is to use Section \ref{subsec: algebra of NFSF's} which details how to combine known NFS functions, with known representation maps, in order to find potential representation maps for other NFS functions.}
	\item {Construct the associated augmented DP problem of form $\mcl P(t_0,x_0)$ \eqref{eqn:DP}, if deterministic, or $\mcl Q(t_0,x_0)$ \eqref{eqn:DP stochastic}, if stochastic.}
	\item {Use discretization to approximate the augmented DP problem using Form $\mcl P_k(t_0,x_0)$ \eqref{eqn: approx opt}, if deterministic, or $\mcl Q_{\eps, k}(t_0,x_0)$ \eqref{eqn:DP stochastic_approx}, if stochastic. }
	\item { Numerically solve $\mcl P_k(t_0,x_0)$ or $\mcl Q_{\eps, k}(t_0,x_0)$ for a sufficiently large $k \in \N$.}
	\item { Construct a feasible policy for the original DP problem from an optimal policy of $\mcl P_k(t_0,x_0)$ or $\mcl Q_{\eps, k}(t_0,x_0)$ using \eqref{eqn:constructed policy} or \eqref{eqn:constructed policy_stochastic}. }
\end{enumerate}


{To illustrate how we use state augmentation and discretization methods we consider the following DP problem from \cite{Duan}.}
\begin{align} \label{eqn:Duan example}
& \min J =  x(3)^2[u(0)^2+u(1)^2+u(1)u(2)^2]^{\frac{1}{2}}\\ \nonumber
& \hspace{2.5cm} + [u(0)^2+u(1)^2+u(1)u(2)^2]^2\\ \nonumber
& \text{subject to,} \quad x(t+1)=\frac{x(t)}{u(t)} \quad \text{for } t \in \{0,1,2\}\\ \nonumber
& x(0)=10, \quad u(0),u(1),u(2) \ge 0.
\end{align}
In \cite{Duan} an analytic solution for \eqref{eqn:Duan example} was found to be:
\begin{align*}
\mathbf x^* = \bmat{10 \\ 6.3943938 \\ 5.782475 \\ 3.8882658}, \hspace{0.1cm} \mathbf u^*=  \bmat{1.5638699 \\ 1.105823 \\ 1.4871604}, \hspace{0.1cm} J^*=74.767439.
\end{align*}
The objective function $J$ in \eqref{eqn:Duan example} is NFS and has a representation dimension of 2. This can be shown by writing $J$ {in} the form of \eqref{forward_sep_def} using the functions,
\begin{align*}
& \phi_0(x,u)=u^2, \quad \phi_1 \left(x,u,w \right)=\bmat{w+u^2\\ u} \\
& \qquad \phi_2 \left(x,u,\bmat{w_1 \\ w_2} \right)=w_1+w_2^2 u^2, \\
& \qquad \phi_3 \left(x, w \right)=x^2 \sqrt{w} + w^2.
\end{align*}
The DP Problem \eqref{eqn:Duan example} can now be written {in} the form of $\mcl A(t_0,x_0)$ using state augmentation, as
\begin{align} \label{eqn:duan augment}
& \min\{ z_1(3)^2 \sqrt{z_3(3)} + z_3(3)^2 \}\\ \nonumber
&\text{subject to,}\\ \nonumber
& z_1(t+1)=\frac{z_1(t)}{u(t)}, \hspace{0.1cm} z_2(t+1)= \begin{cases}
u(t) \text{ if t=1}\\
0 \text{ otherwise}
\end{cases} \forall t \in \{0,1,2\},\\ \nonumber
& z_3(1)= u(1)^2, \quad z_3(2)=z_3(1) + u(1)^2, \\ \nonumber
& z_3(3)=z_3(2)+z_2(2)^2 u(2),  \\ \nonumber
& z_1(0)=10, \hspace{0.1cm} z_2(0)=0, \hspace{0.1cm} z_3(0)=0 \quad u(0),u(1),u(2) \ge 0.
\end{align}
The DP Problem \eqref{eqn:duan augment} is now a special case of $\mcl P(t_0,x_0)$ and equivalent to the original DP Problem \eqref{eqn:Duan example}. The associated approximated DP problem of the form $\mcl P_k(t_0,x_0)$ \eqref{eqn: approx opt} can now be found by selecting appropriate compact state and control spaces; $X \subset \R^3$ and $U \subset \R$. A feasible policy for \eqref{eqn:Duan example} is then constructed from an optimal policy of the associated $\mcl P_k(t_0,x_0)$ using \eqref{eqn:constructed policy}. Figure \ref{fig:Duan example} shows the state trajectories by following different constructed policies for various values of $k$. It is seen that for $k=200$ the algorithm produces a solution within three significant figures of the analytic optimal objective function for \eqref{eqn:Duan example}.

\section{Application To The Energy Storage Problem}\label{sec:battery1}
\label{sec:ProblemStatement}
We apply our augmented DP methodology to the scheduling of batteries in the presence of demand charges {and show that our proposed algorithm outperforms existing heuristics, such as \cite{multi-objective} (approximately \$0.98 savings)}. To do this, we propose a simple model for the dynamics of battery storage. We then formulate the objective function using electricity pricing plans which include demand charges. We see that the system described becomes a DP problem of the form $\mcl S(t_0,x_0)$ \eqref{eqn:S}; which can be tractably solved as it has a NFS objective function. {We will first solve the battery scheduling problem in the deterministic case based on real {historical} solar data. Later, we develop a stochastic Markov model that generates similar solar data to that seen in Tempe, Az.   }
\subsection{Battery Dynamics} \label{sec:battery dynamics}
We model the energy stored in the battery using the difference equation:
\begin{equation}
e({k+1})=\alpha (e({k})+\eta u({k})\Delta t),
\end{equation}
where $e(k)$ denotes the energy stored in the battery at time step $k$, $\alpha$ is the bleed rate of the battery, $\eta$ is the efficiency of the battery, $u({k})$ denotes the charging/discharging $(+/-)$ at time step $k$ and $\Delta t$ is the amount of time passed between each time step. Moreover we denote the maximum charge and discharge rate by $\bar{u}$ and $\underline{u}$ respectively. Thus we have the constraint that $u({k}) \in [\underline{u},\bar{u}]:=U $ for all $k$. Similarly we also add the constraint $e(k) \in [\underline{e},\bar{e}]:=X$ for all $k$ where $\underline{e}$ and $\bar{e}$ are the capacity constraints of the battery (typically $\underline{e}=0$).
\subsection{The Objective Function}
Let us denote $q(k)$ as the power supplied by the grid at time step k.
\begin{equation}
q(k)=q_{a}(k)-q_{s}(k)+u({k}),
\end{equation}
where $q_{a}(k)$ and $q_{s}(k)$ are the power consumed by HVAC/appliances and the power supplied by solar photovoltaics at time step $k$ respectively. For now, it is assumed that both are known a priori.


To define the cost of electricity we divide the day $t \in [0,T]$ into on-peak and off-peak periods. We define an off peak period starting from 12am till $t_{\text{on}}$ and $t_{\text{off}}$ till 12am. We define an on-peak period between $t_{\text{on}}$ till $t_{\text{off}}$. The Time-of-Use (TOU, \$ per kWh) electricity cost during on-peak and off-peak is denoted by $p_{\text{on}}$ and $p_{\text{off}}$ respectively. We further simplify this as $p_k = p_{on}$ if $ k \in T_{on}$ and $p_k = p_{off}$ if $ k \in T_{off}$ where $T_{on}$ and $T_{off}$ are the on-peak and off-peak hours, respectively. These TOU charges define the first part of the objective function as:
\begin{align*}
J_{\text{E}}(\mbf u, \mbf e)&=p_{\text{off}} \sum_{k=0}^{t_{\text{on}}-1}{q(k)\Delta t} + p_{\text{on}} \sum_{k=t_{\text{on}}}^{t_{\text{off}}-1}{q(k)\Delta t} + p_{\text{off}} \sum_{k=t_{\text{off}}}^{T}{q(k)\Delta t}\\
&=\sum_{k \in [0,T]} p_k (q_{a}(k)-q_{s}(k))\Delta t+\sum_{k \in [0,T]} p_k u({k})\Delta t
\end{align*}
where the daily terminal timestep is $T=24/\Delta t$. 

We also include a demand charge, which is a cost proportional to the maximum rate of power taken from the grid during on-peak times. This cost is determined by $p_{d}$ which is the price in \$ per kW. Thus it follows the demand charge will be:
\begin{align*}
J_{D}(\mbf u, \mbf e)
& = p_{d} \max_{k \in \{t_{\text{on}},....,t_{\text{off}}-1\}}\{q_{a}(k)-q_{s}(k)+u({k})\}.
\end{align*}

\subsection{24 hr Optimal Residential Battery Storage Problem}
We may now define the problem of optimal battery scheduling in the presence of demand and Time-of-Use charges, denoted $\mcl D(0,e_0)$.
\begin{align} \label{eqn: Determinist battery problem}
& \min_{\mbf u, \mbf e}\{J_{E}(\mbf u, \mbf e)+J_{D}(\mbf u, \mbf e)\} \text{     subject to}\\ \nonumber
& e(k+1)=\alpha (e(k)+\eta u({k})\Delta t)\text { for } k=0,...,T-1\\ \nonumber
& e_{0}=e0 \text{ }, e(k)\in X, \text{ } u(k) \in U \text { for } k=0,...,T, \\ \nonumber
& \mbf u=(u(0),...,u(T-1)) \text{ and } \mbf e =(e(0),...,e(T))
\end{align}
where recall $U:=[\underline{u},\bar{u}]$ and $X:=[\underline{e},\bar{e}]$.
\begin{prop}
	Problem $\mcl D(0,e_0)$ is a special case of $\mcl S(t_0,x_0)$ \eqref{eqn:S}.
\end{prop}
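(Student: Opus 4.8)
The plan is to exhibit explicit choices of state dynamics $f$, stage costs $c_t, c_T$, and terminal-max functions $d_k$ that realize $\mcl D(0,e_0)$ as an instance of $\mcl S(t_0,x_0)$ with $t_0=0$. The additively separable part is immediate: grouping the time-of-use terms of $J_E$ stage by stage, I would set $c_k(e,u)=p_k(q_a(k)-q_s(k))\Delta t + p_k u \Delta t$ for $0\le k\le T-1$ and place the input-free $k=T$ contribution into $c_T(x(T))$. This is legitimate because $p_k$, $q_a(k)$, $q_s(k)$ are known functions of the index $k$ and so may appear inside $c_k$.

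The genuine obstacle is the demand charge $J_D=p_d \max_{k\in\{t_{on},\dots,t_{off}-1\}}\{q_a(k)-q_s(k)+u(k)\}$: in the template $\mcl S$ the maximum is of state functions $d_k(x(k))$ alone, whereas here the argument of the max depends on the input $u(k)$. To remove this input dependence I would augment the state. Writing $x(k)=[e(k),\,w(k)]^T\in\R^2$, I would keep the battery coordinate evolving as $e(k+1)=\alpha(e(k)+\eta u(k)\Delta t)$ and let the new coordinate record the previous grid draw, $w(k+1)=q_a(k)-q_s(k)+u(k)$. This is a valid vector field $f(x(k),u(k),k)$ since $q_a,q_s$ are functions of $k$ only. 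With this choice $q_a(k)-q_s(k)+u(k)=w(k+1)$, so the demand-charge max over $k\in\{t_{on},\dots,t_{off}-1\}$ becomes a max of the \emph{state} coordinate $w$ over the shifted index set $\{t_{on}+1,\dots,t_{off}\}$.

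It then remains to express a max over a sub-range of indices as the full $\max_{0\le k\le T} d_k(x(k))$ appearing in $\mcl S$. I would set $d_k([e,w]^T)=p_d\,w$ for $k\in\{t_{on}+1,\dots,t_{off}\}$ and set $d_k$ equal to a dominated constant on the remaining indices. Such a constant exists because $e(k)\in X$ and $u(k)\in U$ lie in compact sets and $q_a,q_s$ are bounded, so $w(k)$ admits a uniform lower bound $w_{\min}$; taking the off-range value to be $p_d w_{\min}$ ensures it never exceeds any in-range term, leaving the overall maximum equal to $J_D$. Finally I would collect the remaining identifications: input dimension $m=1$ with $U=[\underline u,\bar u]$, augmented state dimension $n=2$ with constraint sets $X_t=X\times W$ for a compact $W$ containing the reachable $w$-values, and initial condition $x_0=[e_0,\,w_0]^T$ with $w_0$ chosen below $w_{\min}$ so the augmentation coordinate is inert before the first on-peak stage. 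With these substitutions every constraint and every term of $\mcl D(0,e_0)$ matches one of $\mcl S(0,x_0)$. The only real difficulty is the relocation of $u(k)$ out of the maximum via the coordinate $w$; once that is done, the rest is bookkeeping.
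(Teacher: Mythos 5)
Your proof is correct, and it is in fact more scrupulous than the paper's own. The paper's proof is a one-line identification: it sets $c_i = p_i(q_a(i)-q_s(i)+u(i))\Delta t$, and $d_i = p_d(q_a(i)-q_s(i)+u(i))$ for on-peak $i$ with $d_i=0$ otherwise. Note that this $d_i$ depends on the input $u(i)$, whereas in the template \eqref{eqn:S} the functions $d_k$ act on the state $x(k)$ alone; the paper glosses over this mismatch (it is harmless downstream, because the augmentation machinery---cf.\ Example~\ref{lem: sup forward sep}, where the maximum runs over terms $c_k(u(k),x(k))$---accommodates input dependence inside the max), but as a literal reduction to the form \eqref{eqn:S} it is incomplete. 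Your device of recording the grid draw in an extra state coordinate $w(k+1)=q_a(k)-q_s(k)+u(k)$ is exactly what is needed to repair this, converting the input-dependent maximum into a state-only maximum over shifted indices. Your choice of the dominated constant $p_d\,w_{\min}$ on the off-peak indices is also more careful than the paper's choice of $0$: with $0$, the reduction is exact only if the on-peak maximum of $q_a-q_s+u$ is guaranteed nonnegative (it need not be when solar generation exceeds load plus discharge), whereas your uniform lower bound works unconditionally. What the paper's looser reading buys is economy of state: after the subsequent conversion of $\mcl S$ to the augmented form $\mcl A$, it carries only the pair (battery energy, running max), while your strictly conforming formulation carries one extra coordinate, namely (battery energy, $w$, running max). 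Both are constant-dimension and hence tractable, so the difference is cosmetic, but it is worth being aware that your repair costs one state dimension.
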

\begin{proof}
	Let $c_i= p_i (q_{a}(i)-q_{s}(i)+u({i}))\Delta t$\\
	$
	d_i=\begin{cases}
	p_d(q_{a}(k)-q_{s}(k)+u_{k})& k \in T_{on}\\
	0 & \text{otherwise.}
	\end{cases}
	$
\end{proof}
We conclude that our approach to solving NFS DP problems can be applied to battery scheduling. That is, battery scheduling can be represented as an augmented DP problem of Form $\mcl A(t_0,x_0)$.

\subsection{Numerically Solving The Deterministic Battery Scheduling Problem} \label{sec:numerically solve deterministic battery problems}
Our proposed approximation scheme can be applied to solve the battery scheduling problem, $\mcl D(0,e_0)$. This is done by creating an augmented state variable based on the maximum function in the objective function, as in Section \ref{sec:ADP}, and thus constructing an equivalent DP problem of the form $\mcl A(0,x_0)$ \eqref{augmentation}; which is a special case of $\mcl P(t_0,x_0)$. Figure \ref{fig:Cost_vs_grid_points} shows how the monthly cost decreases when we use policies constructed from the associated discretized DP problems, $\mcl P_k(t_0,x_0)$, as $k$ is increased. Although we do not get a monotonically decreasing sequence of costs, the error does decrease as $k \to \infty$. Figure \ref{fig:peak_vs_grid_points} also shows that augmenting and then following our proposed discretization scheme for the battery scheduling problem results in a policy that reduces the consumption demand peak as $k$ is increased. Figure \ref{fig:Computational_time} shows {how the computational time required to solve the discretized battery scheduling problem appears to be of exponential nature with respect to the number of grid points.}

\begin{figure}
	\includegraphics[scale=0.6]{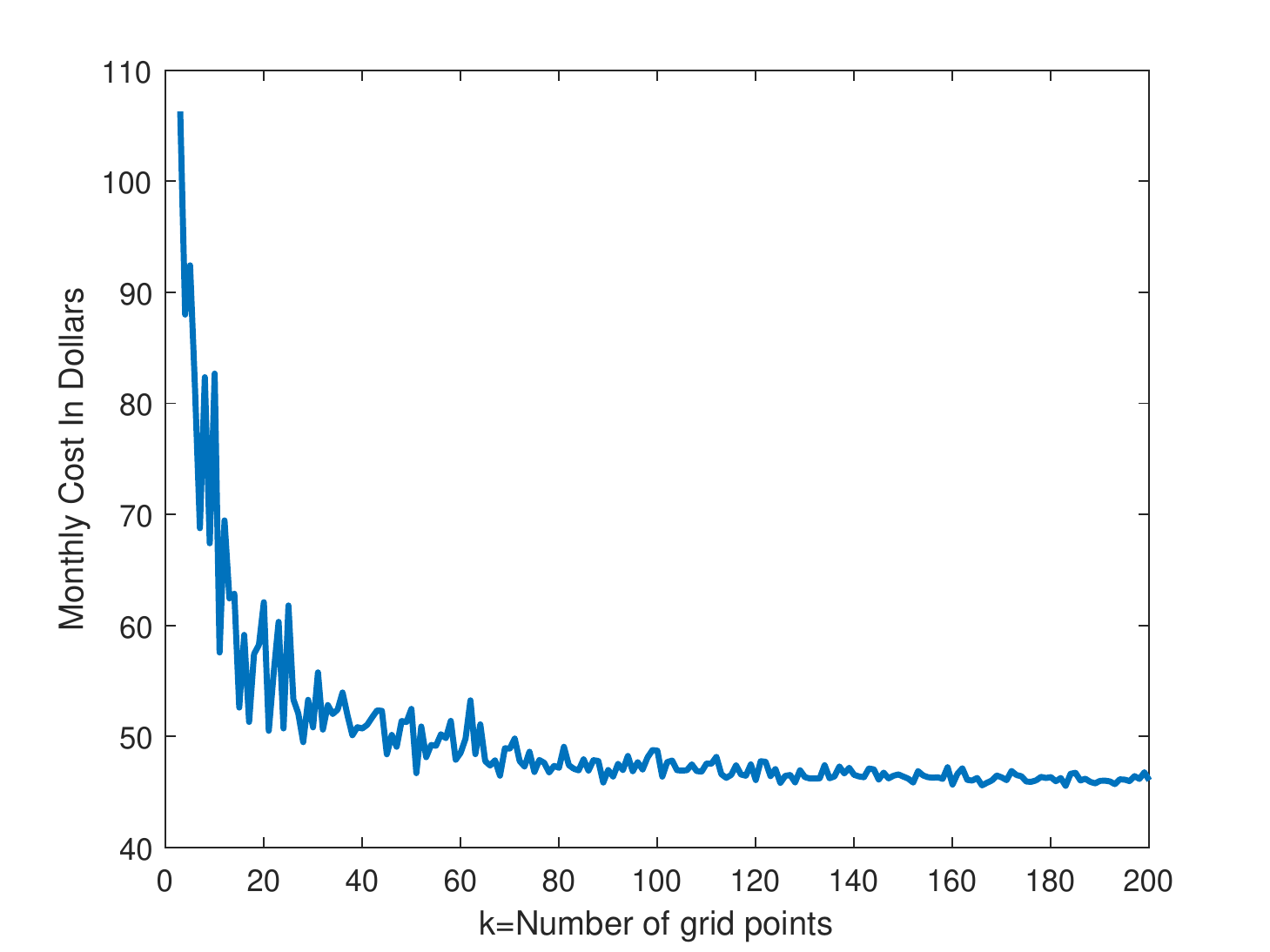}
	\vspace{-20pt}
	\caption{{The resulting monthly cost from using the policy found by solving the discretized problem, of form $P_k(t_0,x_0)$, for optimal battery scheduling.}}
	\label{fig:Cost_vs_grid_points}
	\vspace{-15pt}
\end{figure}

\begin{figure}
	\includegraphics[scale=0.6]{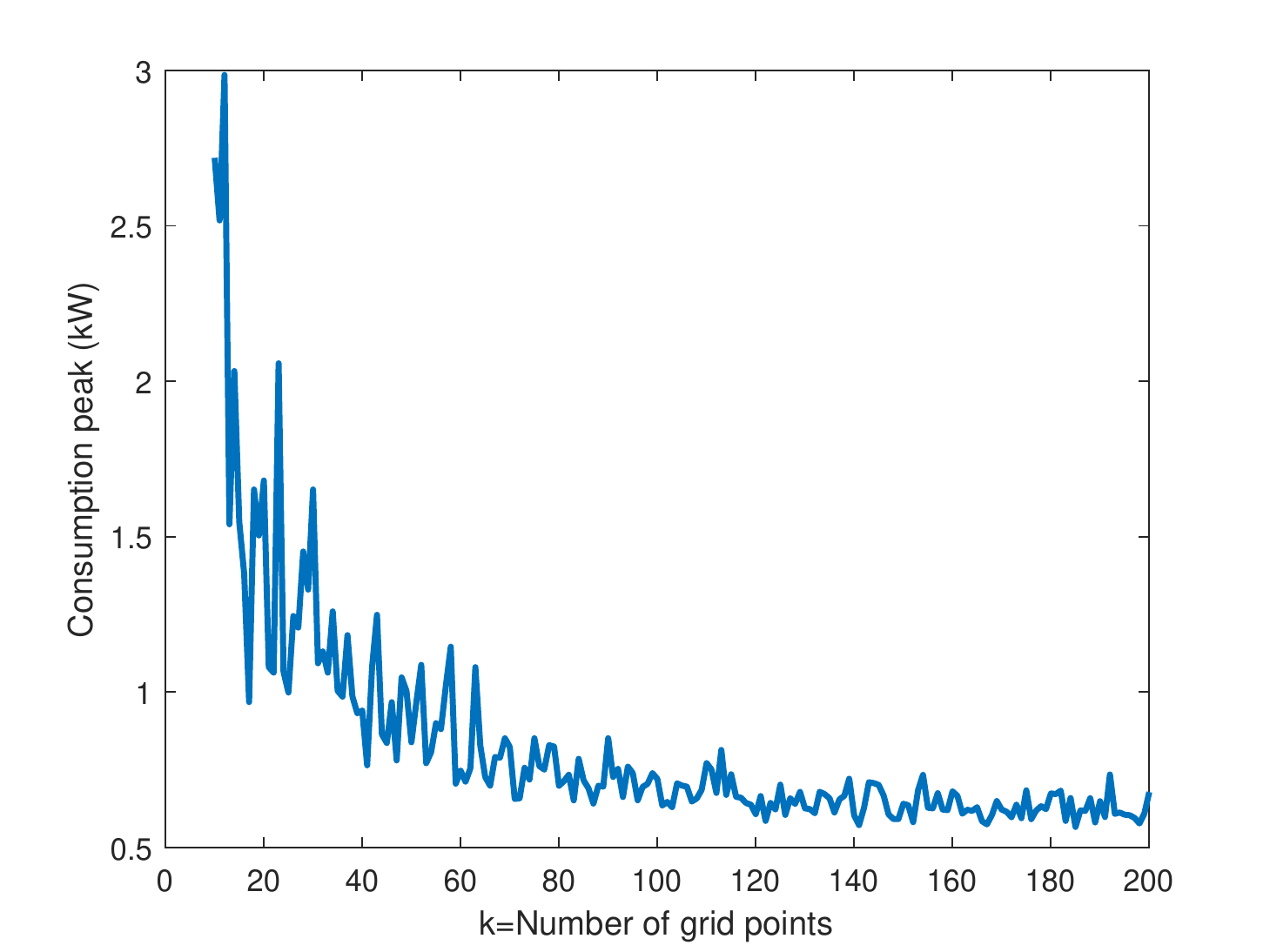}
	\vspace{-20pt}
	\caption{{The resulting maximum demand from using the policy found by solving the discretized problem, of form $P_k(t_0,x_0)$, for optimal battery scheduling.}}
	\label{fig:peak_vs_grid_points}
	\vspace{-15pt}
\end{figure}

\begin{figure}
	\includegraphics[scale=0.6]{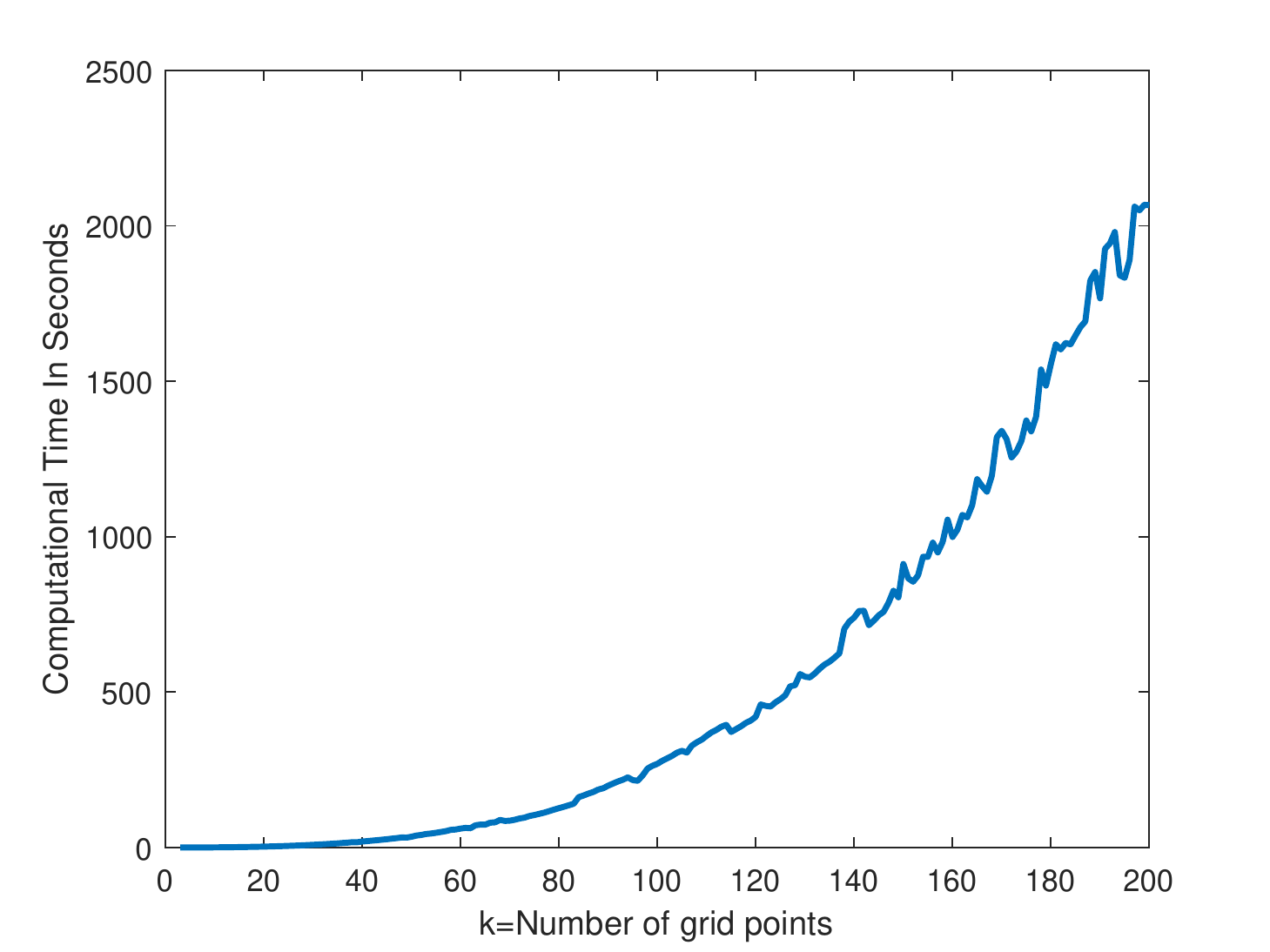}
	\vspace{-20pt}
	\caption{{The computational time in seconds required to solve the discretized battery scheduling problem, of form $P_k(t_0,x_0)$.}}
	\label{fig:Computational_time}
	\vspace{-15pt}
\end{figure}

\begin{figure*}
	\centering
	\includegraphics[width=18cm,height=8cm]{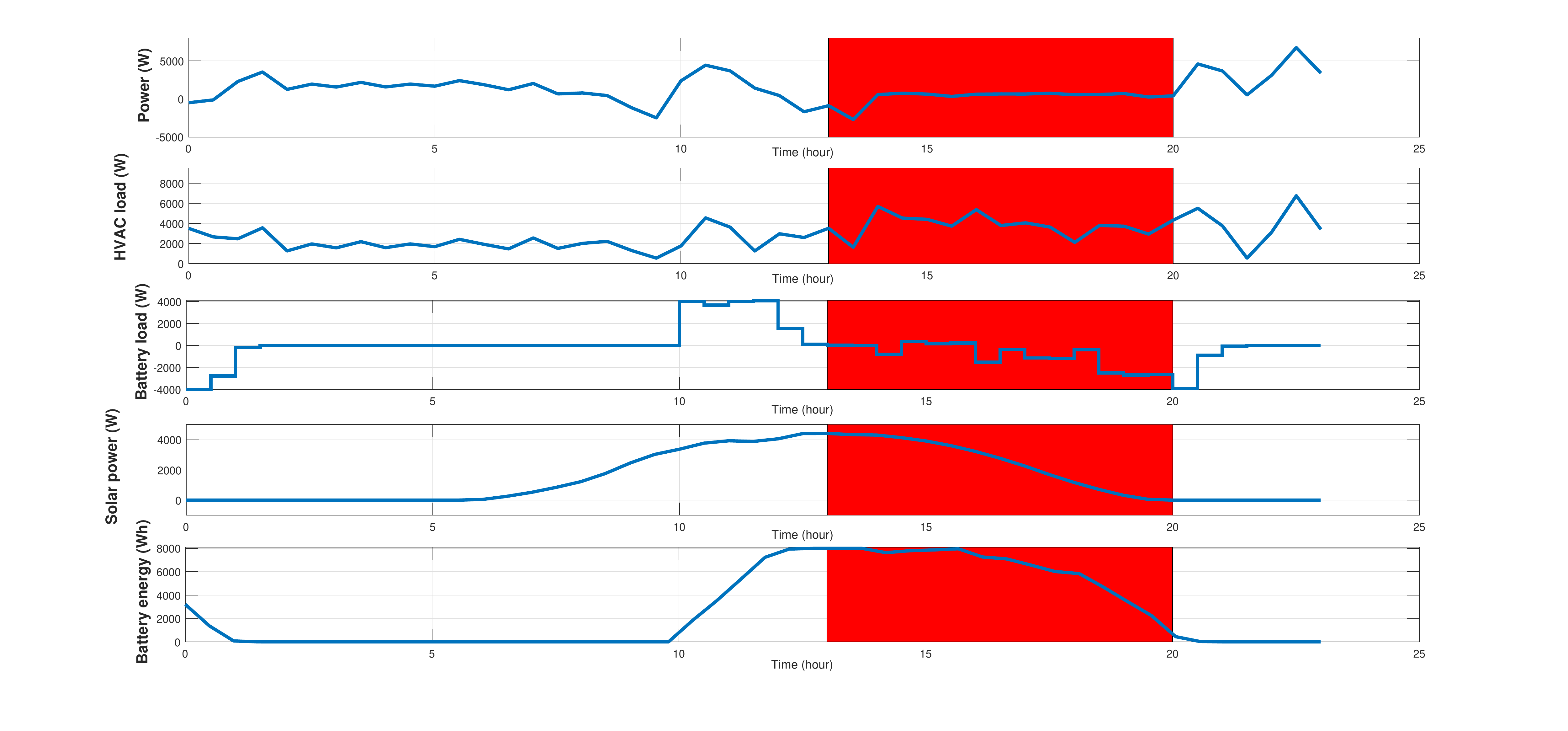}
	\vspace{-20pt}
	\caption{The trajectory the algorithm produces for deterministic solar data. The supremun of the power is 0.7033(kw) and the cost is \$46.389.}
	\label{fig:deterministic}
	\vspace{-15pt}
\end{figure*}

\small{
	\begin{table}
		\centering
		\caption{List of constant values (prices correspond to Salt River Project E21 price plan)}
		\label{tab:constants}
		\begin{tabular}{|l|l||l|l|l}
			\cline{1-4}
			Constant  & Value       &  Constant  & Value &  \\ \cline{1-4}
			$\alpha$  & 0.999791667 (W/h) &  $t_{\text{off}}$  & 41  &  \\ \cline{1-4}
			$\eta$    & 0.92  (\%)      &  $p_{\text{on}}$    & $0.0633 \times 10^{-3}$ (\$/KWh) &  \\ \cline{1-4}
			$\bar{u}$ & 4000 (Wh)       &  $p_{\text{off}}$ & $0.0423 \times 10^{-3}$ (\$/KWh) &  \\ \cline{1-4}
			$\underline{u}$  & -4000 (Wh) &  $p_{\text{d}}$  & 0.2973 (\$/KWh) &  \\ \cline{1-4}
			$\bar{e}$    & 8000   (Wh)     &  $\Delta t$    & 0.5 (h) &  \\ \cline{1-4}
			$t_{\text{on}}$ & 27        &  &  &  \\ \cline{1-4}
		\end{tabular}
	\end{table}
}
\normalsize

We used solar and usage data obtained by local utility Salt River Project (SRP) in Tempe, AZ, for power variables $q_s$ and $q_a$. We also use pricing data from SRP for the parameters $p_{\text{on}}$, $p_{\text{off}}$ and $p_d$. Battery data obtained for the Tesla Powerwall was used to determine the parameters $\alpha$, $\eta$, $\bar{u}$, $\underline{u}$ and  $\bar{e}$. The results of the simulation are shown in Figure \ref{fig:deterministic}. The policy used for this simulation was created using our augmentation and approximation scheme with $k=20$. Interpolation was used to aid in solving {Bellman's equation} \eqref{eqn:bellman} and decrease the approximation error. These results show an improvement in accuracy over results obtained for a similar problem in~\cite{multi-objective} (approximately \$0.98 savings). As expected, we see the battery charges during off-peak and then discharges during on peak times to reduce ToU charges, while maintaining a reserve which it uses to keep consumption flat during on peak times, thereby minimizing the demand charge. As a result the power stabilizes during on peak times - becoming constant.

\subsection{Solving The Stochastic Battery Scheduling Problem} \label{sec:Solving the stochastic battery scheduling problem}
To evaluate the effect of stochastic uncertainty on battery scheduling, we identify a Gauss-Markov model of solar generation based on SRP data. We construct the battery scheduling problem {of} the form $\mcl H_s(t_0,x_0)$ \eqref{eqn:Forward sep DP stochastic} and then use our proposed state augmentation approach to construct an equivalent DP problem of form $\mcl Q(t_0,x_0)$ \eqref{eqn:DP stochastic}. The problem of form $\mcl Q(t_0,x_0)$ is then solved approximately using the methodology of Section \ref{sec: approximately solving Q}.

\subsection{Solar Generation Model}
Our approach to modeling the dynamics of solar generation is based on \cite{Solar}. Our Markov type model can be used to generate high resolution data over large time horizons. The Markov property of the model results in deviation from the mean being correlated time to time, helping represent the physical phenomena of clouds gradually passing overhead rather than instantaneously appearing. 

Our model is a type of autoregressive-moving-average model (ARMAX) \cite{Wan_2015}. In \cite{Li_2014} it is seen ARMAX models preform better than auto-regressive integrated moving average (ARIMA) and in \cite{Chen_2015} it is shown ARMAX models can produce data similar to real data for local sites in Califonia and Colorado.

Exogenous variables, temperature and humidity, are included as state variables in addition to the primary variable - solar radiance. Cross correlations between state variables are computed from data. Specifically, we take time-series data of these quantities, denoted $\mbf {W}(t)$ and normalize this data as,
\begin{align*}
w_{i}(t)=\dfrac{W_{i}(t)-\mu_{i}(t)}{\sigma_{i}(t)}\vspace{-2mm},
\end{align*}
where $\mu_{i}(t)$ is the average historic and clear-sky mean of the variable $W_{i}$ at time step $t$ and $\sigma_{i}(t)$ is the standard deviation of variable $W_{i}$ at time step $t$.\\
The generating process is then given by:\vspace{-2mm}
\begin{align} \label{eqn: solar}
& \mbf{w}(t)=A \mbf {w}(t-1)+B \mbf {v}(t-1) \text{  for  } t=1,..,T\\ \nonumber
& \text{where } \mbf {w}(t) \in \mathbb{R}^3, \mbf {w}(0)= \mbf {0}\\ \nonumber
& \mbf v(t) \sim \mathcal{N}( \mbf {0},I_{3 \times 3}  ),
\end{align}
where the matrices $A$ and $B$ are chosen to preserve the lag 0 and lag 1 cross-correlations seen in the collected data. Specifically, we can compute these matrices as (\cite{Solar})\vspace{-2mm}
\begin{equation} \label{eqn: cross corellations}
 A=M_{1}M_{0}^{-1}\qquad \qquad \qquad  BB^{T}=M_{0}-M_{1}M_{0}^{-1}M_{1}^{T},
\end{equation}
where $M_{i}$ is the i-lag cross correlation matrix. So $(M_{i})_{m,n}=\rho_{i}(m,n)$ where $\rho_{i}(m,n)$ is the cross-correlation coefficient between variables m and n with variable n lagged by i time steps. Then, adding back in the mean and deviation, we obtain the power supplied by solar at time step $k$ as\vspace{-2mm}
\[
q_{s}(k)=w_{1}(k) \sigma_{1}(k) + \mu_{1}(k).\vspace{-2mm}
\]
Figure \ref{fig:solar} shows simulated irradiance data from our solar model when compared to actual recorded irradiance data. For this numerical implementation the mean and standard deviation, $(\mu_{i}(t))_{0 \le t \le T}$ and $(\sigma_{i}(t))_{0 \le t \le T}$, were calculated using data from Wunderground for a weather station in Tempe, AZ on October the 15th 2014 for each state variable. Cross correlations between the variables were also calculated from the same data set and \eqref{eqn: cross corellations} was solved giving the matrices $A$ and $B$ in \eqref{eqn: solar}. As seen in Figure~\ref{fig:solar} this solar generation model gives an output similar to what is observed in real data. Next we incorporate this model into our battery scheduling DP problems.

\begin{figure}
	\includegraphics[scale=0.63]{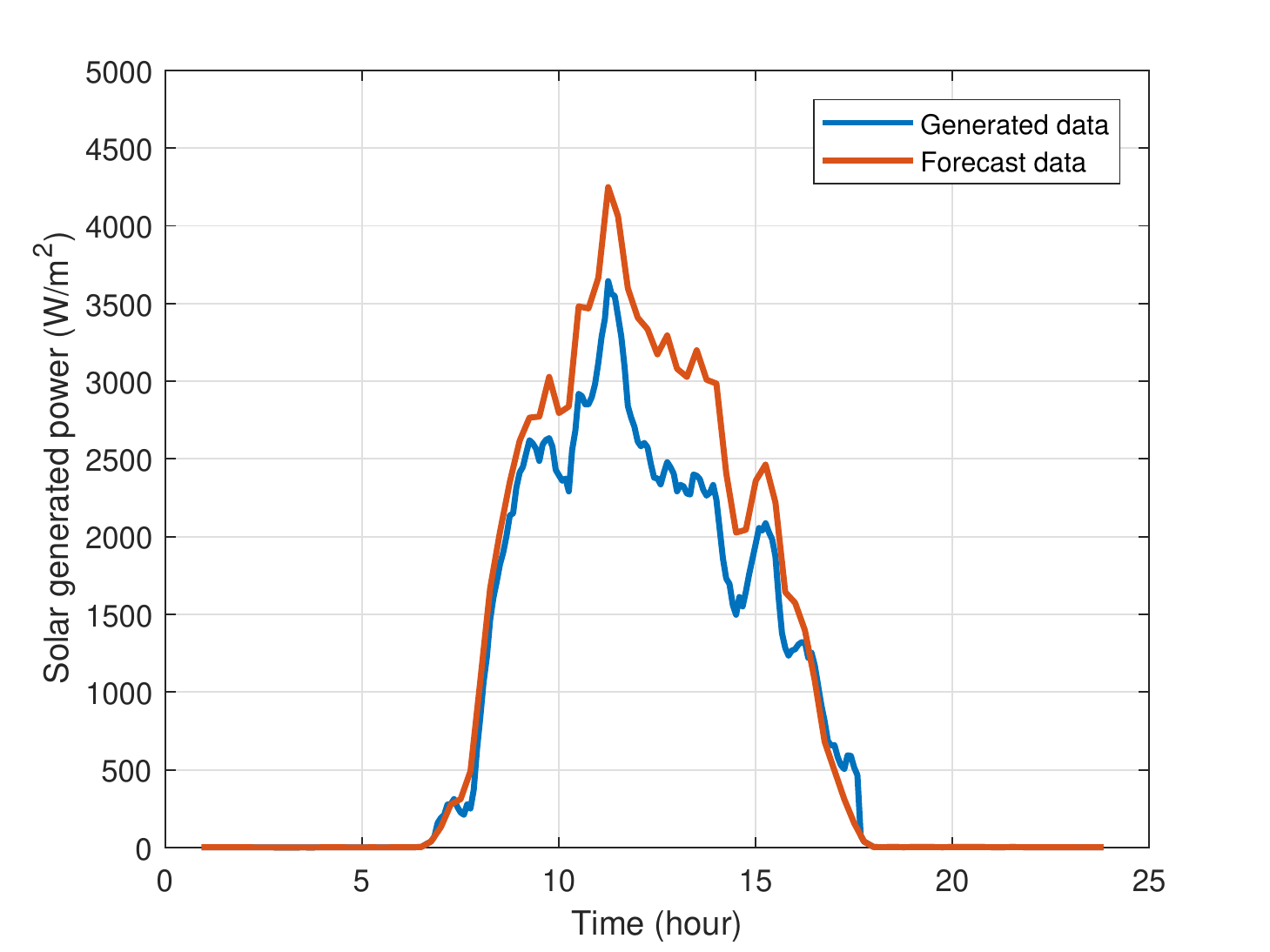}
	\vspace{-20pt}
	\caption{Solar data generated over 24 hours using data from Wunderground}
		\label{fig:solar}
		\vspace{-15pt}
\end{figure}

\noindent \textbf{Stochastic Battery Scheduling} We now modify Problem $\mcl D(0,e_0)$ \eqref{eqn: Determinist battery problem} to give a stochastic version of the battery scheduling problem $\mcl D_s(0,[e_0,0])$,

\small{ \begin{align} \label{eqn:Battery stochastic}
	&\arg \min_{\pi \in \Pi} \;\; \mbb E_{ [\mbf v]_{0}^{T-1} }\bigg[ J_{E}(\Phi_{f,0}(\pi, [e_0,0],T, [\mbf v]_{0}^{T-1})) \\ \nonumber
	& \qquad \qquad \qquad \qquad \qquad + J_{D}(\Phi_{f,0}(\pi, [e_0,0],T, [\mbf v]_{0}^{T-1})) \bigg] \\ \nonumber
	&\text{subject to:  }  \psi_{f,0}(\pi, [e_0,0], t, [\mbf v]_{0}^{t-1}) \in E_t \times \R^3 \text{ for  } t=0,..,T  \\ \nonumber
	&  \pi(x,t) \in U_t \text{ and }   v(t) \in \R^3 \sim \mathcal{N}(\mbf 0,I_{3 \times 3}) \forall x \in X_t, \forall t=0,..,T-1,
	\end{align} } \normalsize where $J_{E}$ is the ToU cost function and $J_{D}$ is the demand charge found in Section \ref{sec:battery dynamics}; $f([e,w],u,t,v)= \begin{bmatrix}
\alpha (e+\eta u\Delta t) \\ Aw+Bv  \end{bmatrix}$; $E_t=[\underline{e}, \bar{e}]$ and $U_t= [\underline{u}, \bar{u}]$ for all $t \in \{0,...,T\}$; $\psi_{f, t_0}$ and $\Phi_{f, t_0}$ are the state and trajectory map respectively from Definition \ref{defn: the trajectory map}; $[\mbf v]_{0}^{T-1} = [v(0),..,v(T-1)] \in \R^{3 \times T}$; matrices $A$ and $B$ are calculated from weather data using equations \eqref{eqn: cross corellations}; and all constants are found in Table \ref{tab:constants}.



\subsection{Numerically Solving The Stochastic Battery Scheduling Problem}
 Using the state augmentation procedure in Section \ref{sec:ADP} on the stochastic battery scheduling problem $\mcl D_s(0,[e_0,0])$ \eqref{eqn:Battery stochastic}, we may find a stochastic DP problem of the form $\mcl Q(t_0,x_0)$ \eqref{eqn:DP stochastic} such that an optimal policy for $\mcl D_s(0,[e_0,0])$ can be constructed from an optimal policy of $\mcl Q(t_0,x_0)$. We may then construct the approximated stochastic DP problem $\mcl Q_{\eps,k}(t_0,k)$ \eqref{eqn:DP stochastic_approx} and solve it using Bellman's equation \eqref{Bellman_stochastic}. From an optimal policy of $\mcl Q_{\eps,k}(t_0,k)$ we then construct a feasible policy for $\mcl Q(t_0,x_0)$ using \eqref{eqn:constructed policy_stochastic}. Figure \ref{fig:stochastic} demonstrates a simulation of using the feasible policy obtained via augmenting and approximating the stochastic battery scheduling problem with a reasonably selected family of compact state spaces, $\{H_{\eps,t}\}_{0 \le t \le T}$, and discretization level $k=10$. To {simplify} computation we use a one state version of our solar model \eqref{eqn: solar} and use interpolation while solving Bellman's equation. As expected the battery charges during the on peak times and conservatively discharges during the off-peak times. The solar data generated from this run are then used as input to the deterministic algorithm in order to compare performance. As anticipated, the deterministic case performs better than the stochastic case.

\section{Conclusion}
In this paper we propose a {general} formulation of the DP problem. We show that if the objective function is forward separable, DP problems may reformulated using state augmentation as an equivalent DP problem with additively separable objective function. Furthermore, we define a class of functions, called naturally forward separable (NFS) functions, such that DP problems with an objective function of this class can be tractably solved using state augmentation. Moreover, we show that the problem of optimal scheduling of battery storage in the presence of combined demand and time-of-use charges is a special case of this class of NFS DP problems. We further extend these results to stochastic DP problems with a NFS objective. The proposed algorithms are applied to a battery scheduling problem using first a deterministic and then Gauss-Markov model for solar generation and load.

{Extensions of this work include the  use of non-separable input constraints (such as those considered in \cite{mannor2016robust}) and algorithms for finding the minimal dimension NFS representation of a given objective function.}

\section{Appendix}

	\begin{lem} \label{lem: Optimal policy is Markov}
		{ Consider a DP problem of the form $\mcl Q(t_0,x_0)$ \eqref{eqn:DP stochastic} with additively separable objective function $J_{t_0}^{\mcl Q}$. For any family of functions of the form $\hat{\pi}: \R^{n \times (t-t_0+1)} \to \R^m$ are such $\hat{\pi}_t([(x(t_0),....,x(t))]) \in U_t $ and $f(x(t),\hat{\pi}_t([(x(t_0),....,x(t))]),t, v(t) ) \in X_{t+1}$ for all $x(i) \in X_i$, $i \in \{t_0,...,t\}$, $v(t) \in \R^q$ and $t \in \{t_0,...,T-1\}$ there exists $\alpha \in \Pi$ such that
		{ \begin{align} \label{identity: markov produces same objective function value}
			& \mbb E_{ [\mbf v]_{t_0}^{T-1}}\left( J_{t_0}^{\mcl Q}(\Phi_{f,t_0}(\alpha, x_0,T, [\mbf v]_{t_0}^{T-1}))  \right)\\ \nonumber
			& \qquad \qquad = \mbb E_{[\mbf v]_{t_0}^{T-1}}\left( J_{t_0}^{\mcl Q}(\Phi_{f,t_0}(\hat{\pi}, x_0,T, [\mbf v]_{t_0}^{T-1})) \right)
			\end{align} } \normalsize
		where we make a small abuse of notation to extend the trajectory map $\Phi_{f,t_0}$ to policies that use the entire state space history. }
	\end{lem}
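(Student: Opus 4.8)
The plan is to reduce the claimed identity to an equality of the one--step marginal laws of the state--input pair and then to construct $\alpha$ so that it reproduces those marginals. Since $J_{t_0}^{\mcl Q}$ is additively separable, writing $u(t)=\hat{\pi}_t([x(t_0),\dots,x(t)])$ and $x(t)=\psi_{f,t_0}(\hat{\pi},x_0,t,[\mbf v]_{t_0}^{t-1})$, the expected cost decomposes as $\mbb E\big[\sum_{t=t_0}^{T-1} c_t(x(t),u(t)) + c_T(x(T))\big] = \sum_{t=t_0}^{T-1}\mbb E[c_t(x(t),u(t))] + \mbb E[c_T(x(T))]$, which depends on $\hat{\pi}$ only through the laws $\rho_t := \mathrm{Law}(x(t),u(t))$ for $t_0\le t\le T-1$ and the law of $x(T)$. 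Hence it suffices to exhibit $\alpha\in\Pi$ under which the state--input pair has exactly the same law $\rho_t$ at every stage; the two objective values in \eqref{identity: markov produces same objective function value} will then coincide term by term.

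First I would build $\alpha$ from the regular conditional distribution of the input given the current state under $\hat{\pi}$: set the stage-$t$ action law of $\alpha$ at state $x$ equal to the conditional law of $u(t)$ given $x(t)=x$ induced by $\hat{\pi}$ (such a regular conditional probability exists because all the underlying spaces are Euclidean). The core of the argument is then a forward induction on $t$ showing that the law of $x(t)$ under $\alpha$ equals the law of $x(t)$ under $\hat{\pi}$. The base case $t=t_0$ is immediate since $x(t_0)=x_0$ is deterministic. For the inductive step, both policies feed their stage-$t$ state--input pair through the \emph{same} transition mechanism $x(t+1)=f(x(t),u(t),t,v(t))$ with the same independent Gaussian $v(t)\sim\mcl N(\mbf 0,I_{q\times q})$; since by construction the conditional law of $u(t)$ given $x(t)$ agrees for the two policies while the marginal of $x(t)$ agrees by the inductive hypothesis, the joint law of $(x(t),u(t))$ agrees, and pushing forward through $f$ shows the law of $x(t+1)$ agrees. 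This simultaneously establishes $\rho_t^{\alpha}=\rho_t^{\hat{\pi}}$ for all $t$ and closes the induction, yielding the desired equality of expected costs.

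The main obstacle is that the construction above naturally produces a \emph{randomized} Markov policy, whereas $\Pi$ consists of deterministic maps $(x,t)\mapsto u\in\Gamma_{t,x}$. The hard part is therefore to replace the conditional input law by a single feasible action at each state without changing the expected cost. This is exactly where the continuous (Gaussian) nature of the noise is used: because each $v(t)$ is absolutely continuous, the induced trajectory laws are non--atomic, so for almost every state $x$ reachable at time $t$ the value $\hat{\pi}_t([x(t_0),\dots,x(t)])$ relevant to the cost can be pinned down by a measurable selection $x\mapsto\alpha(x,t)\in\Gamma_{t,x}$, the exceptional set of states reachable through conflicting histories carrying probability zero and hence not affecting the expectation. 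I would discharge the remaining technicalities — measurability of the selection and of $\psi_{f,t_0}$, and the interchange of expectation and summation — using the stated measurability of $\psi_{f,t_0}$ together with regularity of the cost terms, so that the two expectations in \eqref{identity: markov produces same objective function value} agree.
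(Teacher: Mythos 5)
Your first two paragraphs reproduce, correctly, the standard argument behind the result the paper invokes: the paper's own proof of this lemma is only a citation (Proposition 8.1 of Bertsekas 1978, or Theorem 6.2 of Hinderer 1970), and those classical results are proved exactly as you do — define a Markov kernel as the regular conditional law of $u(t)$ given $x(t)$ under $\hat{\pi}$, then run a forward induction showing the stage-wise marginals of $(x(t),u(t))$ coincide under the two policies, which suffices because the additively separable cost is a sum of expectations against those marginals. Up to that point your argument is sound and matches the cited route.

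The genuine gap is your de-randomization step. Your construction produces a \emph{randomized} Markov policy, and your claim that non-atomicity of the Gaussian noise forces the conditional law of $u(t)$ given $x(t)=x$ to be a point mass off a null set of states is false. Non-atomic noise does not make the history almost surely recoverable from the current state: if the dynamics forget the past — e.g.\ $f(x,u,t,v)=v$, which is precisely the dynamics of the paper's own Porteus counterexample $\mcl W$ — then conditionally on $x(t)$ the earlier states remain genuinely random, so a history-dependent $\hat{\pi}$ induces a non-degenerate conditional action distribution at \emph{every} reachable state, not merely on a set of measure zero. The ``exceptional set of states reachable through conflicting histories'' is then the whole support of the state law, your measurable selection necessarily changes the joint law of $(x(t),u(t))$, and the forward induction breaks at the first such stage. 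To close the lemma as stated one must either enlarge $\Pi$ to randomized Markov policies (which is what the cited results actually deliver, with exact equality), or weaken the conclusion to the inequality that some deterministic Markov policy does at least as well — which is all the paper needs downstream in the proof of Lemma \ref{lem: addative stochastic functions satisfy the principle of optimality}, but which requires a Bellman-equation/measurable-selection argument rather than your law-matching one.
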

	\begin{proof}
	{	Proposition 8.1 \cite{Bertsekas_1978} or  Theorem 6.2 \cite{Hinderer_1970}. }\end{proof}


\section*{Acknowledgments}
This research was supported by the NSF Grant CNS-1739990. We gratefully acknowledge data and expertise provided by Robert Hess of Salt River Project.

\bibliographystyle{ieeetr}
\bibliography{DP_submit}
\vspace{-1.2cm}
\begin{IEEEbiography}[{\includegraphics[width=1in,height=1.25in,clip,keepaspectratio]{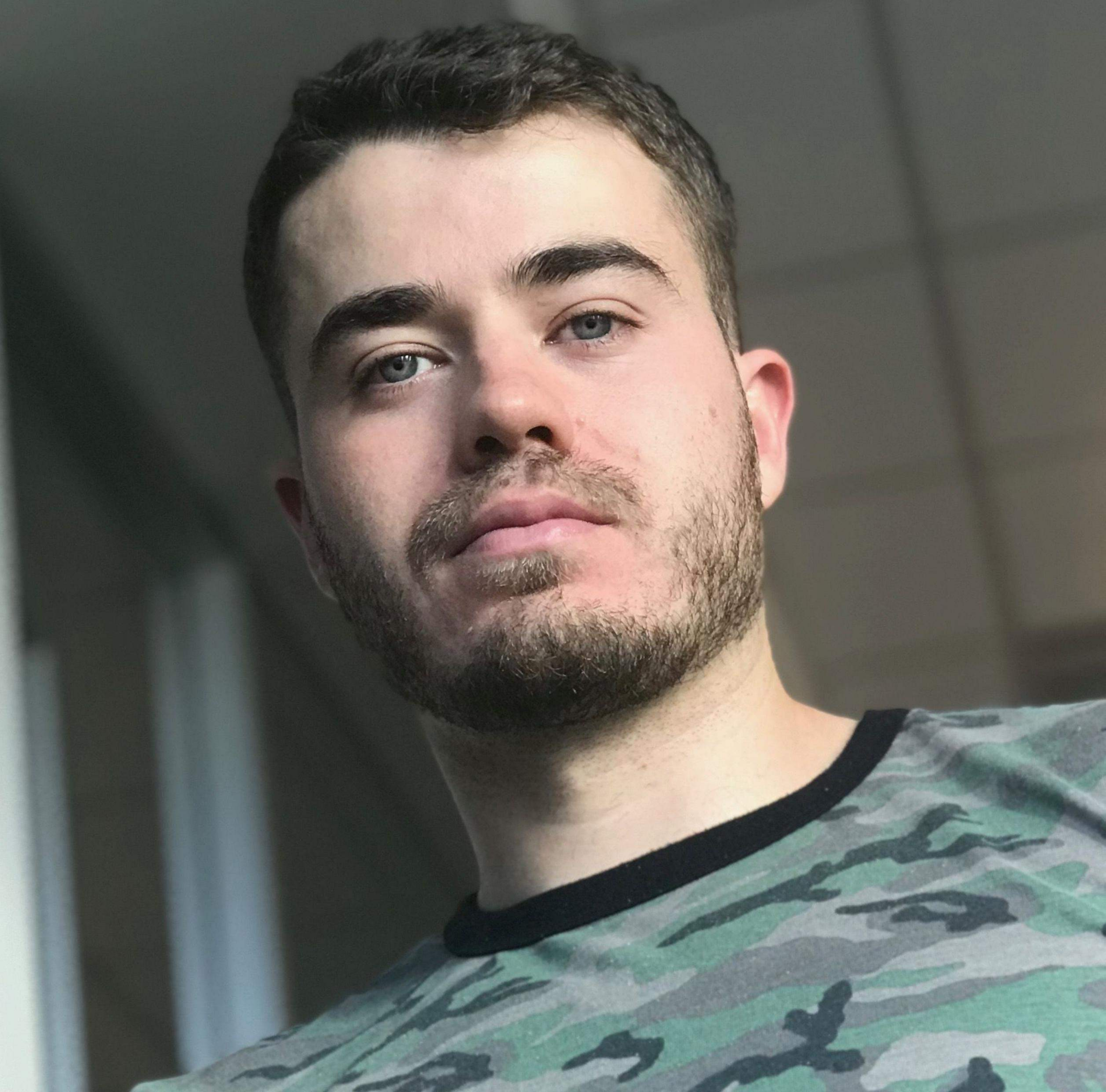}}]{Morgan Jones}
received the B.S. and Mmath in
mathematics from The University of Oxford, England in 2016.
He is a research associate with Cybernetic
Systems and Controls Laboratory (CSCL) in the School for Engineering
of Matter, Transport and Energy (SEMTE)
at Arizona State University. His research primarily
focuses on the estimation of attractors, reachable sets and regions of attraction for nonlinear ODE's. Furthermore,
He has been studying the effects of optimal energy storage in smart grid
environments.
\end{IEEEbiography}
\vspace{-1.2cm}
\begin{IEEEbiography}[{\includegraphics[width=1in,height=1.25in,clip,keepaspectratio]{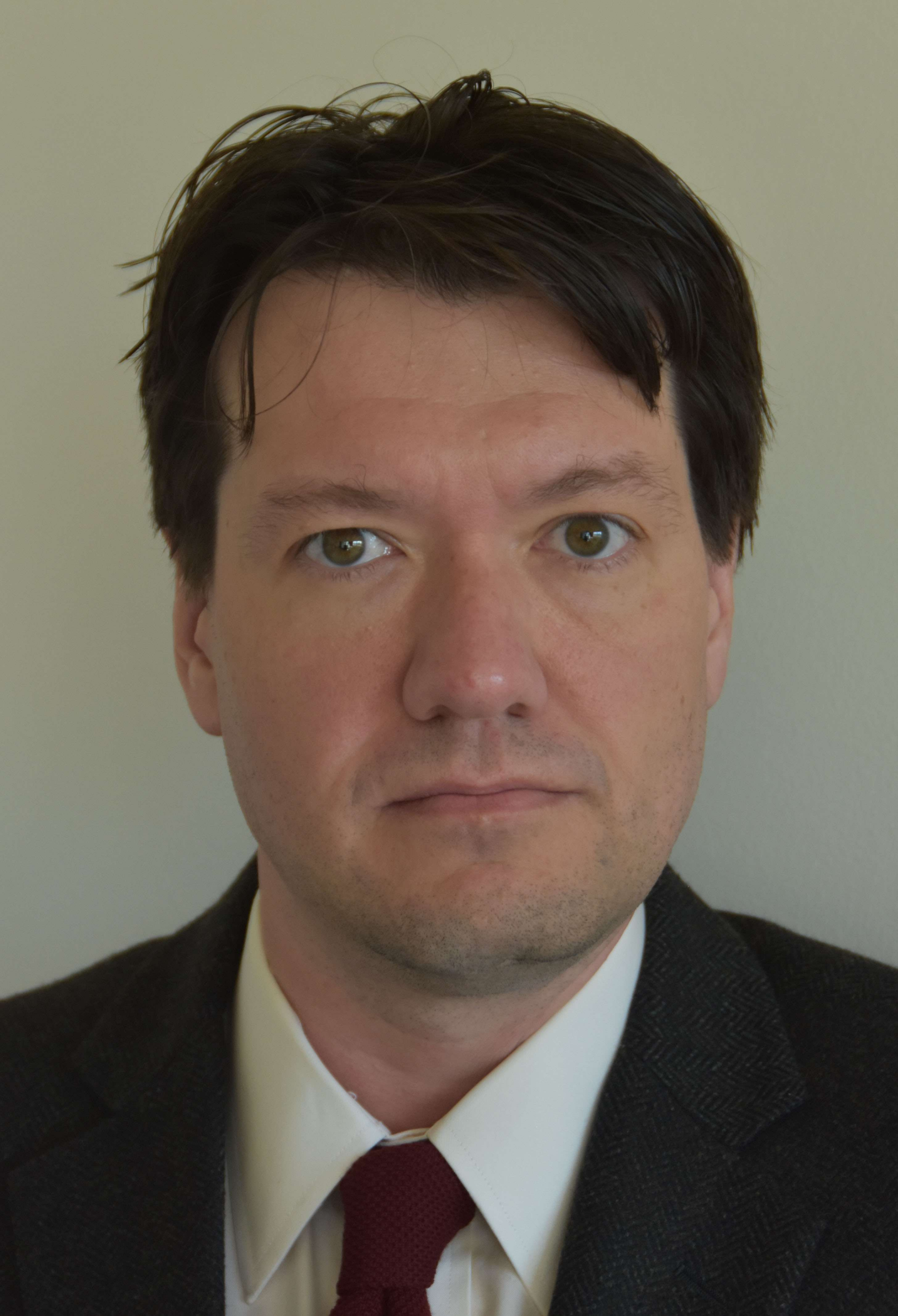}}]{Matthew M. Peet}
	received the B.S. degree in
	physics and in aerospace engineering from the University
	of Texas, Austin, TX, USA, in 1999 and
	the M.S. and Ph.D. degrees in aeronautics and astronautics
	from Stanford University, Stanford, CA,
	in 2001 and 2006, respectively. He was a Postdoctoral
	Fellow at INRIA, Paris, France from 2006 to
	2008. He was an Assistant Professor of Aerospace
	Engineering at the Illinois Institute of Technology,
	Chicago, IL, USA, from 2008 to 2012. Currently, he
	is an Associate Professor of Aerospace Engineering
	at Arizona State University, Tempe, AZ, USA. Dr. Peet received a National
	Science Foundation CAREER award in 2011.
\end{IEEEbiography}

\end{document}